\documentclass[a4paper,9pt]{amsart}

\usepackage{amssymb,statmath}
\usepackage{color}
\usepackage{mathrsfs}
\usepackage{enumerate}

\newtheorem{theorem}{Theorem}

\newtheorem{proposition}{Proposition}[section]
\newtheorem{lemma}{Lemma}[section]
\newtheorem{remark}{Remark}[section]
\newtheorem{example}{Example}[section]
\newtheorem{notation}{Notation}
\newtheorem{assumption}{Assumption}

\newtheorem{thm}{Theorem}

\newcommand{\R}{{\mathbb{R}}}
\newcommand{\Z}{{\mathbb{Z}}}
\newcommand{\N}{{\mathbb{N}}}

\newcommand{\<}{\langle}
\renewcommand{\>}{\rangle}
\newcommand{\Id}{{\mathrm{I}}}
\newcommand{\xii}{{\mathrm{|\xi|}}}
\newcommand{\supp}{{\mathrm{\,supp\,}}}

\renewcommand{\epsilon}{\varepsilon}
\newcommand{\e}{\epsilon}
\newcommand{\BMO}{{\mathrm{\,BMO\,}}}

\newcommand{\BQ}{{\mathrm{BQ\,}}}
\newcommand{\IBQ}{{\mathrm{IBQ\,}}}
\newcommand{\VBQ}{{\mathrm{VBQ\,}}}
\newcommand{\VVBQ}{{\mathrm{VVBQ\,}}}

\newcommand{\KG}{{\mathrm{KG\,}}}

\newcommand{\ROT}{{\mathrm{ROT\,}}}

\newcommand{\low}{{\mathrm{low\,}}}
\newcommand{\high}{{\mathrm{high\,}}}

\newcommand{\lin}{{\mathrm{lin\,}}}

\title[Dispersive-Dissipative estimates for Boussinesq and other equations]{Dispersive-Dissipative estimates for Boussinesq \\  and other generalized wave equations}

\author[M. D'Abbicco, M.R. Ebert, A. Lagioia]{Marcello D'Abbicco, Marcelo Rempel Ebert, Antonio Lagioia}

\thanks{Orcid: $0000-0003-1369-1157$ (M. D'Abbicco), 0000-0003-4592-252X (M.R. Ebert), 0009-0004-0641-974X (A. Lagioia)\\
The first and the third authors have been supported by PRIN 2022 ``Anomalies in partial differential equations and applications'' CUP H53C24000820006, and have been supported by the INdAM-GNAMPA Project CUP E53C23001670001. The second author has been visiting professor at University of Bari Aldo Moro in 2024 and is partially supported by São Paulo Research Foundation (FAPESP), grant number
2024/12753-8. The third author is supported by INdAM-GNAMPA Project CUP E5324001950001.}

\address{Marcello D'Abbicco, Antonio Lagioia, Department of Mathematics, University of Bari, Via E. Orabona 4, 70125 BARI - ITALY}

\address{Marcelo Rempel Ebert, Departamento de Computa\c{c}\~ao e Matem\'atica, Universidade de S\~ao Paulo, Av. Bandeirantes 3900, Ribeir\~ao Preto, SP, 14040-901, Brasil \& Universit\`a degli Studi di Bari Aldo Moro}

\email{marcello.dabbicco@uniba.it, ebert@usp.br, antonio.lagioia@uniba.it}

\begin{document}

\begin{abstract}
We derive dispersive-dissipative estimates for multipliers associated to a general class of wave-type equations, in particular for the viscous Boussinesq equation. The dispersion is related to the geometric hypotheses on the phase function and on the degeneracies that may happen at low and high frequencies. The dissipation interacts with the dispersion, influencing the decay rate of the solution.

MSC2020: 35B40, 35E15, 35Q53
\end{abstract}

\maketitle

\section{Introduction}\label{sec:intro}

In this paper, we obtain dispersive-dissipative estimates for multipliers in the form
\begin{equation}\label{eq:mult}
e^{it\omega(\rho(\xi))-t\alpha(\rho^2(\xi))},
\end{equation}
where $\rho:\R^n\setminus\{0\}\to\R_+$ is a positively homogeneous smooth function of degree~$1$, $\omega:\R_+\to\R$ and $\alpha:\R_+\to\R_+$. The term $\mathscr{F}^{-1}(e^{it\omega(\rho(\xi))})$ is an oscillatory integral, where $\mathscr{F}$ denotes the Fourier transform in the tempered distributions space $\mathcal S'$. This term produces the dispersive effect under suitable assumptions on $\omega$ and $\rho$, while the term $\mathscr{F}^{-1}(e^{-t\alpha(\rho(\xi)^2)})$ is responsible for dissipation and possibly smoothing. 
The main application of our analysis is for multipliers as in~\eqref{eq:mult} that originate from the study of the full symbol of suitable partial differential equations of evolutive type.

We are interested in applying our result to the case of weakly degenerate phase functions, namely, when $\rho\omega''(\rho)=\textit{o}(\omega'(\rho))$ as $\rho\to0$ or as $\rho\to\infty$. This degeneracy has the effect to weaken the dispersive character of the multiplier, still the dispersion is stronger than it is in the pure wave case, since in this latter scenario $\omega(\rho)=\rho$ so that $\omega''$ vanishes identically. The geometric properties of the hypersurface $\Sigma=\{\xi\in\R^n: \ \rho(\xi)=1\}$ also come into play in determining the dispersive effect. In particular, following as in~\cite{CMY}, we assume that $\Sigma$ is a convex hypersurface of finite type (see later, \textsection\ref{sec:finite}, see also~\cite{BNW}); the type order of $\Sigma$ influences the dispersive decay rate (see, for instance, \cite{Ru2009, Ru2012, Su1994, ZYF2004}).

The interplay between dispersion and dissipation creates new interesting structures of decay rate as $t\to\infty$ and/or it modifies the regularity of the solution. The viscous Boussinesq equation is of particular interest in this class.

We are interested in deriving $L^p-L^q$ estimates for the solution to the initial value problems associated to the models considered, with $1\leq p\leq 2\leq q\leq\infty$. We focus on sharp $L^p-L^q$ estimates for the dispersive-dissipative model in the endpoint case $p=1$ (and, by duality, $q=\infty$). To obtain these estimates, several tools of classic Fourier analysis fail to produce optimal estimates. Our approach is based on two steps. We first derive estimates for the purely dispersive model with initial data in the real Hardy space $H^1$ (see~\cite{FS72}, see later, \textsection\ref{sec:notation}) in place of $L^1$, then we use the property that $\mathscr{F}^{-1}(\alpha^\Xi\,e^{-t\alpha(\rho(\xi)^2)})$ maps $L^1$ into $H^1$ for positive $\Xi$, under suitable assumptions (see~\cite[Corollary 3.1]{DAE25}).

The $L^p-L^q$ estimates that we obtain can be applied to study several types of nonlinear problems. We provide an example of this application for the nonlinear (inviscid or viscous) Boussinesq equation, and possibly for other models. 

\subsection{The operator $A$ and the class of equations}

In the following, $A$ denotes a second-order homogeneous operator whose action is defined by
\begin{equation}\label{eq:operatorgen}
Af = \mathscr{F}^{-1} (\rho(\xi)^2\,\hat f),
\end{equation}
for $f\in\mathcal S$ and extended by density. Here $\hat f=\mathscr{F}f$.  We assume that the Hessian matrix $H_{\rho(\xi)}$ is positive semidefinite for any~$\xi\neq0$, and that the hypersurface
\[ \Sigma=\{\xi\in\R^n:\ \rho(\xi)=1\},  \]
is of finite type (see later, \textsection\ref{sec:finite}). The class of operators in~\eqref{eq:operatorgen} includes differential operators with real, constant, coefficients
\begin{equation}\label{eq:operator}
A = - \sum_{j,k=1}^n a_{jk} \partial_{x_j}\partial_{x_k},\qquad \text{with}\quad \rho(\xi)^2 = \sum_{j,k=1}^n a_{jk} \xi_j\xi_k>0, \quad \text{for any $\xi\neq0$.}
\end{equation}
%
For those operators, the hypersurface $\Sigma$ is of finite order $2$. In particular, $A=-c^2\Delta$ with $c>0$, corresponds to $\rho=c\xii$, the radial case. The class in~\eqref{eq:operatorgen} also includes operators $A$ that are not differential operators, but such that $A^\ell$ is a differential operator for some $\ell\geq2$, in the sense that
\begin{equation}\label{eq:Aell}
A^\ell= (-1)^\ell \sum_{|\gamma|=2\ell} c_\gamma \partial_x^\gamma,\quad \text{with}\qquad \rho(\xi)^{2\ell} =\sum_{|\gamma|=2\ell} c_\gamma \xi^\gamma>0,\quad \text{for any $\xi\neq0$.}
\end{equation}
See, for instance, Examples~\ref{ex:CMY}, \ref{ex:DFZ}, \ref{ex:ZYF}. For those operators, the hypersurface $\Sigma$ is of finite order at most $2\ell$.

Let $A$ be as in~\eqref{eq:operatorgen}. We consider equations in the form
\begin{equation}\label{eq:abgeneral}
u_{tt} + 2\alpha(A)\,u_t + \beta(A) u =0.
\end{equation}
In most cases, $\alpha$ and $\beta$ are polynomials. We perform the Fourier transform~$\mathscr{F}$ with respect to the space variable, letting $\hat u(t,\cdot)=\mathscr{F}u(t,\cdot)$. Then~\eqref{eq:abgeneral} reduces to the ordinary differential equation
\begin{equation}\label{eq:harmonic}
\hat u_{tt} + 2\alpha(\rho^2) \hat u_t + \beta (\rho^2)\,\hat u=0,
\end{equation}
whose characteristic equation $\lambda^2 + 2\lambda\alpha(\rho^2) + \beta(\rho^2)=0$ has roots
\[ \lambda_\pm=-\alpha(\rho^2)\pm i\omega(\rho), \qquad \omega(\rho)=\sqrt{\beta(\rho^2)-\alpha^2(\rho^2)},\]
provided that $\alpha^2<\beta$. This case corresponds to the so-called ``damped oscillations'' regime, using the language of the damped harmonic oscillator. Information on the fundamental solution of the equation may then be obtained studying the properties of the multipliers $e^{t\lambda_\pm}$, that is, multipliers as in~\eqref{eq:mult}, possibly replacing $\omega$ by $-\omega$.

The region where $\alpha^2(\rho^2)\geq\beta(\rho^2)$ is called overdamping region, since oscillations are canceled in the solution to~\eqref{eq:harmonic}. In this region, there is no dispersion, so the study of $L^p-L^q$ estimates is much simpler. We address the reader to the case of the wave equation with classical damping~\cite{MN03,N03} and with viscoelastic damping~\cite{P85,S00}, where $A=-\Delta$, $\beta(A)=A$, and $\alpha(A)=1$ or $\alpha(A)=A$, respectively.

\subsection{The application to weakly degenerate dissipative models}

As an application of our general results, we study different equations, focusing on models with the aforementioned degeneracies. The equations considered in our paper may be interpreted as variants of two basic models, the classic wave equation:
\begin{equation}\label{eq:W}\tag{W-eq}
u_{tt} + Au =0,
\end{equation}
and the classic plate equation:
\begin{equation}\label{eq:PL}
u_{tt} + A^2u =0,
\end{equation}
In the comparison of those two models, a strong degeneracy appears for~\eqref{eq:W}, since $\omega=\rho$; hence, $\omega''$ identically vanishes. On the other hand, $\omega=\rho^2$ for~\eqref{eq:PL} is not degenerate, due to $\omega''=2$. This difference influences the dispersive characters of the two models, even when the hypersurface $\Sigma$ is the same; as we will later see, the dispersive structure is related to the quantity $t^{-\frac{n-1}\tau}$ for~\eqref{eq:W} and to the quantity~$t^{-\frac{n-1}\tau-\frac12}$ for~\eqref{eq:PL}; here $\tau$ is the order of the hypersurface $\Sigma$ ($\tau=2$ if $A$ is a second order differential operator as in~\eqref{eq:operator}), see~\textsection\ref{sec:finite}.

The main object of our study is the Viscous Boussinesq model
\begin{equation}\label{eq:VBQ}\tag{BQ-eq}
u_{tt} + Au + A^2u + 2\e Au_t =0,
\end{equation}
with $\e\geq0$, sufficiently small. The case $\e=0$ corresponds to the inviscid model. The phase function $\omega$ associated to~\eqref{eq:VBQ} is weakly degenerate at $\rho=0$, in the aforementioned sense: $\rho\omega''(\rho)=\textit{o}(\omega'(\rho))$ as $\rho\to0$ (see later, \textsection\ref{sec:BQ}).

The same structure of degeneracy also appears, for $\e>0$, at low frequencies only ($\rho(\xi)<\e^{-1}$), in the wave equation with viscoelastic dissipation:
\begin{equation}\label{eq:Wvisco}\tag{WV-eq}
u_{tt} + Au + 2\e Au_t =0.
\end{equation}
The presence of dissipation weakens the strong degeneracy associated to the phase function $\omega=\rho$ obtained for $\e=0$, i.e., for~\eqref{eq:W}, reducing it to a degeneracy analogous to the one appearing for~\eqref{eq:VBQ} at low frequencies.

Another model of interest is the Damped Klein-Gordon equation:
\begin{equation}\label{eq:DKG}\tag{KG-eq}
u_{tt}+Au+u+2\e u_t =0.
\end{equation}
Equation~\eqref{eq:DKG} with $\e>0$ is also known as telegraph model (when $n=1$). The phase function for this model is degenerate as $\rho\to\infty$ for $\e\geq0$, in the aforementioned sense: $\rho w''(\rho)=\textit{o}(w')$ as $\rho\to\infty$.

\subsection{The application to other models}

Our result also applies, in a simpler way, to the cases where there is no degeneracy, in some sense to models more closely related to the plate model~\eqref{eq:PL} than to the wave model~\eqref{eq:W}.

We may consider the dissipative plate equation,
\begin{equation}\label{eq:DPl}\tag{P-eq}
u_{tt}+A^2u+2\e A u_t =0,
\end{equation}
or to the plate equation with mass and different dissipations:
\begin{equation}\label{eq:DPlmass}\tag{PM-eq}
u_{tt}+A^2u+u+2\e A^\gamma u_t =0,\qquad \gamma=0,1.
\end{equation}
Models with fractional dissipation may also be studied, letting $\gamma\in(0,1)$ in~\eqref{eq:DPlmass} or modifying other models; we mention the wave equation with structural dissipation and the Klein-Gordon equation with structural dissipation.

Our result may also be applied to models with different order in time. For instance, if we consider Schr\"odinger type equation with potential:
\begin{equation}\label{eq:Sch}\tag{S-eq}
u_t - i\omega(A^{\frac12})u + \alpha(A)u=0,
\end{equation}
then the Fourier transform of its fundamental solution is $e^{it\omega(\rho)-t\alpha(\rho^2)}$, as in~\eqref{eq:mult}. For a study in the case $\alpha=0$ and with positive $w''$, we address the reader to~\cite{Ozawa2011}. We mention that the assumption $\omega''>0$ is not necessary in our manuscript (if $\omega''$ vanishes at some value $\rho>0$, then we can apply Proposition~\ref{prop:maininter} in~\textsection\ref{sec:inter}).

Our results may also be applied to models with higher order derivatives in time, once they are in normal forms, provided that one obtains suitable properties on the phase function appearing in the fundamental solution of the associated initial value problem.


\subsection{The dispersive estimates}

In~\cite{CMY}, the authors obtain several results for oscillatory integrals of type $\mathscr{F}^{-1} (e^{it\omega(\rho(\xi))})$. In particular, they obtain pointwise estimates for the multipliers localized via a dyadic partition in frequencies, then they apply those estimates to derive estimates for the solution to several equations in Besov spaces  $B^s_{p,r}-B^s_{q,r}$, where $1\leq p\leq q\leq\infty$ and $r\in[1,\infty]$. Due to the Besov embeddings (for $s=0$ and $r=2$) that hold for $p\in(1,2]$ and $q\in[2,\infty)$, the authors may derive $L^p-L^q$ estimates, with $1<p\leq 2\leq q<\infty$.

In our manuscript, we slightly modify the approach in~\cite{CMY}, so that we include the endpoints $p=1$ and $q=\infty$, which cannot be directly obtained by the Besov embeddings; therefore, we derive $L^p-L^q$ estimates, with $1\leq p\leq2\leq q\leq\infty$. In a limit case, that is crucial for the application of the estimates when there is a dissipation interacting with the dispersion, the $L^1$ space is replaced by the real Hardy space $H^1$, and $L^\infty$ is replaced by $\BMO$, the space of functions with bounded mean oscillations (see later, Theorem~\ref{thm:mainlowdisp}).

Then we study the interplay between the dispersive estimates for $e^{it\omega(\rho(\xi))}$ and the dissipative estimates for $e^{-t\alpha(\rho^2(\xi))}$ in~\eqref{eq:mult}.





\subsection{Notation}\label{sec:notation}

In the paper, $\R_+=\{r\in\R: \ r>0\}$, $\mathcal S=\mathcal S(\R^n)$ is the Schwartz space of rapidly decreasing functions, and $\mathcal S'$, the dual space of $\mathcal S$, is identified with the space of tempered distributions.
\begin{notation}
We use the following notation for the minimum of two functions or quantities:
\[ f \wedge g = \min\{f,g\} \]
We also use the notation $f\lesssim g$ when $f,g\geq0$ and there exists $C>0$ (where the dependence of $C$ on parameters may be deduced by the context) such that $f\leq Cg$. We also use the notation $f\sim g$ when $f\lesssim g\lesssim f$. We use the notation $f\ll g$ or $g\gg f$ when we assume that $f\leq \varepsilon g$ holds for a sufficiently small~$\varepsilon>0$.
\end{notation}
\begin{notation}
Let $\gamma=(\gamma_1,\ldots,\gamma_n)\in\N^n$. We say that $|\gamma|=\gamma_1+\ldots+\gamma_n$ is the length of the multi-index $\gamma$ and we put
\[ \partial_x^\gamma=\partial_{x_1}^{\gamma_1}\ldots\partial_{x_n}^{\gamma_n},\qquad \xi^\gamma=\xi_1^{\gamma_1}\cdot\ldots\cdot\xi_n^{\gamma_n}. \]
\end{notation}
\begin{notation}
We use the notation $\<\rho\>=\sqrt{1+\rho^2}$. For any $\nu\in\R$, we use the notation
\[ |D|^\nu f=\mathscr{F}^{-1}(\xii^\nu\,\hat f), \qquad \<D\>^\nu f=\mathscr{F}^{-1}(\<\xi\>^\nu\,\hat f), \]
where $\mathscr{F}$ denotes the Fourier transform on $\R^n$, for $f$ in suitable functional or distributional space.
\end{notation}
\begin{notation}
We consider the usual Lebesgue spaces on $\R^n$:
\[ \begin{split}
L^q
    & =\{f \, \text{meas. on $\R^n$:} \ \int_{\R^n} |f(x)|^q\,dx<\infty\},\quad q\in[1,\infty),\\
L^\infty
    & =\{f \, \text{meas. on $\R^n$:} \ \exists L, \ m(\{x\in\R^n: \ |f(x)|>L\})=0\},
\end{split} \]
with their norms.
\end{notation}
\begin{notation}
We consider $M_p^q$ multipliers, in the sense that $\Phi\in M_p^q$ and
\begin{equation}
\label{eq:multiplier}
\|\Phi\|_{M_p^q} = \|\mathscr{F}^{-1}(\Phi)\ast \cdot \|_{L^p\to L^q},
\end{equation}
when $\Phi\in\mathcal S'(\R^n)$ and $\mathscr{F}^{-1}(\Phi)\ast$ may be extended to a linear bounded operator from $L^p$ to $L^q$.
\end{notation}
We recall that, in particular: $M_2^2=L^\infty$, $M_1^2=M_2^\infty=L^2$, $M_1^1=\mathscr{F}(\mathcal M)$, where $\mathcal M$ is the space of bounded measures, and $\|\Phi\|_{M_1^\infty}=\|\mathscr{F}^{-1}(\Phi)\|_{L^\infty}$ (\cite[Theorems 1.4, 1.5]{Ho}). In particular, $M_1^\infty$ contains distributions of positive order (\cite[Theorem 1.9]{Ho}), though in this manuscript we will mainly consider multipliers in $\mathcal S$, due to a dyadic localization.
\begin{notation}
We also define $M(H^1,L^q)$ and $M(H^1,H^1)$ as the space of bounded multipliers from the real Hardy space $H^1$ to $L^q$ and to itself, respectively. It is convenient for us to describe the real Hardy space $H^1$ as the subset of $L^1$ functions~$f$ such that the Riesz transforms of $f$ are also in $L^1$; we put
\begin{equation}\label{eq:H1norm}
\|f\|_{H^1} = \|f\|_{L^1} + \sum_{j=1}^n \|R_jf\|_{L^1},
\end{equation}
where $R_j$ are the Riesz transforms of $f$ defined by $R_jf=\mathscr{F}^{-1}(i(\xi_j/\xii)\,\hat f)$ (see~\cite{F71}). Therefore,
\begin{equation}\label{eq:H1multnorms}\begin{split}
\|m\|_{M(H^1,L^q)}&=\sup\big\{\|\mathscr{F}^{-1}(m\mathscr{F}(f))\|_{L^q}:f\in \mathcal{S}\cap H^1, \|f\|_{H^1}=1\big\}, \\
\|m\|_{M(H^1,H^1)}&=\sup\big\{\|\mathscr{F}^{-1}(m\mathscr{F}(f))\|_{H^1}:f\in \mathcal{S}\cap H^1, \|f\|_{H^1}=1\big\}.
\end{split}\end{equation}
Similarly, we define $M(L^p,\BMO)$, $M(H^1,\BMO)$ and $M(\BMO,\BMO)$, where $\BMO$ is the space of functions with bounded mean oscillations, and it is the dual of $H^1$, see~\cite[Theorem 2]{FS72}.
\end{notation}
\begin{notation}
For $1\leq p\leq 2\leq q\leq\infty$, we use the following notation to describe the influence of the dispersion on $L^p-L^q$ estimates:
\begin{equation}\label{eq:d}
d=d(p,q)=\min \left\{ \frac2p-1, 1-\frac2q \right\}.
\end{equation}
\end{notation}
In particular, $d(p,q)\leq1/p-1/q$, with the equality achieved if, and only if, $q=p/(p-1)$, that is, $(p,q)$ lies on the ``conjugate line''.

\subsection{Multiplier theorems in $L^p$ spaces and in the real Hardy space $H^1$}\label{sec:multipliers}

Here we collect a simple version of the Mikhlin-H\"ormander multiplier theorem, which we will use in the following.
\begin{thm}[Mikhlin-H\"ormander multiplier theorem](see~\cite[Theorem H]{SteinWeiss1960} and~\cite[Theorem E]{Miyachising})
\label{thm:Mik}
If $m\in\mathcal C^{[n/2]+1}(\R^n\setminus\{0\})$ and $|\partial_\xi^\alpha m(\xi)|\leq C\xii^{-|\alpha|}$ for $|\alpha|\leq [n/2]+1$, then $m\in M_p^p$ for any $p\in(1,\infty)$ and $m\in M(H^1,H^1)$.
\end{thm}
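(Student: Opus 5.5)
The plan is the classical Calderón--Zygmund route: decompose $m$ dyadically, bound the convolution kernel of each piece using the derivative bounds, and obtain the $L^p$ statement and the $H^1$ statement from the same kernel estimates.

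\emph{Dyadic decomposition and kernel bounds.} Fix a Littlewood--Paley partition of unity $\sum_{j\in\Z}\psi(2^{-j}\xi)=1$ on $\xi\neq0$, with $\psi$ supported in $\{1/2\le\xii\le2\}$, and set $m_j=m\,\psi(2^{-j}\cdot)$. The hypothesis $|\partial_\xi^\alpha m|\le C\xii^{-|\alpha|}$, for $|\alpha|\le N=[n/2]+1$, and Leibniz's rule give $|\partial^\alpha m_j|\lesssim 2^{-j|\alpha|}$ on the support of $m_j$, which has measure about $2^{jn}$. Let $K_j=\mathscr{F}^{-1}m_j$. Plancherel gives
\[ \|\,|x|^{N}K_j\|_{L^2}\lesssim 2^{j(n/2-N)} \quad\text{and}\quad \|K_j\|_{L^2}\lesssim 2^{jn/2}, \]
and the same bounds hold for $\nabla K_j$ with an extra factor $2^j$. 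Cauchy--Schwarz, split at $|x|\sim R$, then yields the H\"ormander condition
\[ \sup_{y}\int_{|x|>2|y|}|K(x-y)-K(x)|\,dx\le C, \qquad K=\sum_j K_j. \]
To verify it, use the gradient bound when $2^j|y|\le1$ and the tail bound when $2^j|y|>1$. Summability follows because $N>n/2$ makes the geometric series converge.

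\emph{$L^p$ boundedness, $1<p<\infty$.} Since $m\in L^\infty=M_2^2$, the operator is bounded on $L^2$. The H\"ormander condition plus the Calder\'on--Zygmund decomposition gives the weak type $(1,1)$ bound. Marcinkiewicz interpolation then gives $p\in(1,2]$, and duality gives $p\in[2,\infty)$, because $\bar m(-\xi)$ satisfies the same hypotheses.

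\emph{$H^1$ boundedness.} I would use the atomic decomposition of $H^1$. For an atom $a$ supported in a ball $B(x_0,r)$, with $\|a\|_\infty\le|B|^{-1}$ and $\int a=0$, it suffices to show $\|Ta\|_{L^1}\lesssim1$. On $B(x_0,2r)$ this follows from $L^2$ boundedness and Cauchy--Schwarz. Off that ball, the cancellation of $a$ lets one write
\[ Ta(x)=\int\bigl(K(x-y)-K(x-x_0)\bigr)a(y)\,dy, \]
and the H\"ormander condition bounds its $L^1$ norm. This gives $T:H^1\to L^1$.

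To land in $H^1$, use the characterisation \eqref{eq:H1norm}. We need $R_kTa\in L^1$, and $R_kT$ has multiplier $i(\xi_k/\xii)\,m(\xi)$. This product again satisfies the hypotheses, since $\xi_k/\xii$ is smooth and homogeneous of degree $0$. So the same argument gives $R_kT:H^1\to L^1$, and hence $\|Tf\|_{H^1}\lesssim\|f\|_{H^1}$.

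\emph{Main obstacle.} The delicate point is getting the H\"ormander condition with only $[n/2]+1$ derivatives, rather than pointwise kernel bounds that would need more smoothness. The $L^2$-weighted estimate above is exactly what makes the threshold $N>n/2$ suffice, and this is where I expect the real work to lie. Since the statement is classical, the paper may simply cite \cite{SteinWeiss1960,Miyachising} in place of this argument.
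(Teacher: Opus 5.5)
The paper does not prove this theorem: it quotes it as a classical result from Stein--Weiss and Miyachi, as you anticipated, so there is no in-paper argument to compare against. Your outline is the standard proof, and it is correct. Its ingredients are:
\begin{itemize}
\item weighted $L^2$ kernel bounds, which give H\"ormander's integral condition precisely because $[n/2]+1>n/2$;
\item Calder\'on--Zygmund theory for $1<p<\infty$;
\item atoms for $H^1\to L^1$;
\item the Riesz-transform characterization applied to the multiplier $i(\xi_k/|\xi|)\,m$ for $H^1\to H^1$.
\end{itemize}

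A complete write-up needs two routine additions. First, run the argument on the truncations $\sum_{|j|\le M}m_j$ with constants uniform in $M$, then pass to the limit; this is needed because $\sum_j K_j$ converges only in $\mathcal S'$ and locally away from the origin. Second, use your weak type $(1,1)$ bound to extend the uniform estimate on atoms to all of $H^1$, since uniform boundedness on atoms alone does not automatically give boundedness on $H^1$.
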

We also recall the Hardy-Littlewood-Sobolev inequality for fractional integration.
\begin{thm}[Hardy-Littlewood-Sobolev inequality](see~\cite[Theorem F]{Miyachising})
\label{thm:HLS}
Let $1<p<q<\infty$. Then $\xii^{-n\left(\frac1p-\frac1q\right)}\in M_p^q$. If $p=1$, then $\xii^{-n\left(1-\frac1q\right)}\in M(H^1,L^q)$.
\end{thm}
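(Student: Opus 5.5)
The statement is the Hardy--Littlewood--Sobolev inequality (Theorem~\ref{thm:HLS}), which is quoted from the literature. A proof would go as follows.

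\emph{Case $1<p<q<\infty$.} Set $\nu=n\left(\frac1p-\frac1q\right)\in(0,n)$. Up to a constant, $\mathscr{F}^{-1}(\xii^{-\nu})$ is the Riesz potential kernel $c_{n,\nu}|x|^{\nu-n}$. This identity can be checked by homogeneity and rotation invariance, or via the Gaussian subordination $\xii^{-\nu}=\Gamma(\nu/2)^{-1}\int_0^\infty s^{\nu/2-1}e^{-s\xii^2}\,ds$. The claim therefore reduces to $\|\,|x|^{\nu-n}\ast f\|_{L^q}\lesssim\|f\|_{L^p}$. To prove it, I would use Hedberg's argument. For $R>0$, split the kernel into $|x|<R$ and $|x|\geq R$. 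The local part is bounded pointwise by $R^\nu Mf(x)$, where $M$ is the Hardy--Littlewood maximal function, by summing dyadic annuli. The tail is bounded by $R^{\nu-n/p}\|f\|_{L^p}$ by H\"older's inequality; this is finite because $(n-\nu)p'>n$, which is equivalent to $q<\infty$. Optimizing in $R$ gives
\[
|I_\nu f(x)|\lesssim (Mf(x))^{p/q}\,\|f\|_{L^p}^{1-p/q}.
\]
The maximal theorem on $L^p$, valid since $p>1$, then yields the $L^p\to L^q$ bound.

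\emph{Case $p=1$, $\nu=n(1-\frac1q)$.} Here the estimate fails on $L^1$, so I would use the atomic decomposition of $H^1$; the Riesz-transform norm~\eqref{eq:H1norm} is equivalent to the atomic norm by Fefferman--Stein~\cite{FS72}. It then suffices to prove a uniform bound $\|I_\nu a\|_{L^q}\lesssim1$ for every $(1,\infty)$-atom $a$. Such an atom is supported in a ball $B(x_0,r)$, satisfies $\|a\|_\infty\leq r^{-n}$, and has $\int a=0$. I would split $\R^n$ into $2B$ and its complement.

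On $2B$, pick an auxiliary exponent $1<p_0<n/\nu$ and let $q_0$ be given by $\frac1{q_0}=\frac1{p_0}-\frac\nu n$; then $q_0>q$. Apply the first case with $(p_0,q_0)$ and then H\"older's inequality on $2B$. Since $\|a\|_{L^{p_0}}\lesssim r^{-n+n/p_0}$, this gives
\[
\|I_\nu a\|_{L^q(2B)}\lesssim r^{n(\frac1q-\frac1{q_0})}\,r^{-n+\frac n{p_0}}=r^{0}=1,
\]
where the exponent vanishes by the scaling relation $\frac1q=1-\frac\nu n$.

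Off $2B$, use the cancellation $\int a=0$ to replace the kernel $|x-y|^{\nu-n}$ by $|x-y|^{\nu-n}-|x-x_0|^{\nu-n}$. The mean value theorem bounds this difference by $r|x-x_0|^{\nu-n-1}$, so $|I_\nu a(x)|\lesssim r\,|x-x_0|^{\nu-n-1}$. Its $L^q$ norm over $|x-x_0|>2r$ is finite because $(n+1-\nu)q>n$. By scaling it equals $r\cdot r^{\nu-n-1+n/q}=1$.

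Summing over the atoms gives $\xii^{-\nu}\in M(H^1,L^q)$. A standard density or weak-limit argument handles the passage from finite atomic sums to general elements of $H^1\cap\mathcal S$.

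\emph{Main obstacle.} The delicate point is the endpoint case $p=1$. One has to justify that boundedness on atoms extends to all of $H^1$; since the target $L^q$ is a Banach space with $q<\infty$, there is no subtlety there. One also has to ensure the exponents in the tail integral are admissible, which is exactly where the restriction $q<\infty$ enters.
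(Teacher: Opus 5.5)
The paper gives no proof of this statement; it is quoted from \cite[Theorem~F]{Miyachising}, so the paper's side consists only of a citation. Your argument is a correct, self-contained proof along classical lines. For $1<p<q<\infty$ you use Hedberg's pointwise bound $|I_\nu f|\lesssim (Mf)^{p/q}\|f\|_{L^p}^{1-p/q}$ together with the maximal theorem. For $p=1$ you prove a uniform $L^q$ bound on $(1,\infty)$-atoms: on $2B$ via an auxiliary pair $(p_0,q_0)$ and H\"older, and off $2B$ via the cancellation $\int a=0$ and the mean value theorem. The exponent bookkeeping checks out. Indeed, $(n-\nu)p'>n\iff q<\infty$, and the power of $r$ on $2B$ is $0$. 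Off $2B$, $(n+1-\nu)q=n+q>n$ and $r\cdot r^{\nu-n-1+n/q}=1$. What your route adds over the citation is transparency about exactly where $q<\infty$ and the $H^1$ cancellation are used.

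One step is justified for the wrong reason. Extending from atoms to general $f\in H^1\cap\mathcal S$ does not follow just because $L^q$ is a Banach space. Bownik (building on Meyer) exhibited a linear functional, i.e.\ with scalar target, on finite combinations of $(1,\infty)$-atoms that is uniformly bounded on atoms yet does not extend boundedly to $H^1$. The ``Banach target'' principle you have in mind is the Meda--Sj\"ogren--Vallarino theorem, which needs $(1,s)$-atoms with $s<\infty$ or continuous $(1,\infty)$-atoms.

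What actually makes it work here is that $I_\nu$ is a concrete operator already defined on all of $L^1$ and of weak type $(1,q)$. This follows from your own Hedberg argument run with $p=1$, which gives $|I_\nu f|\lesssim (Mf)^{1/q}\|f\|_{L^1}^{1-1/q}$, combined with the weak-$(1,1)$ bound for $M$. Now let $f=\sum_j\lambda_j a_j$ converge in $H^1\subset L^1$. Then $I_\nu\big(\sum_{j\le J}\lambda_j a_j\big)\to I_\nu f$ in measure. The same partial sums also converge in $L^q$, since $\sum_j|\lambda_j|\,\|I_\nu a_j\|_{L^q}\lesssim\sum_j|\lambda_j|\lesssim\|f\|_{H^1}$. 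The two limits agree a.e., which gives $\|I_\nu f\|_{L^q}\lesssim\|f\|_{H^1}$.

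Also, identifying the Riesz-transform norm \eqref{eq:H1norm} with the atomic norm requires the atomic decomposition of Coifman and Latter in addition to Fefferman--Stein.
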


\subsection{Plan of the paper}

The plan of the paper is the following:
\begin{itemize}
\item in~\textsection\ref{sec:finite}, we provide the definition of hypersurface of finite order, with some remarks and examples;
\item in~\textsection\ref{sec:stationary}, we derive stationary phase lemmas, localized at low and high frequencies to treat multipliers of type $e^{it\omega(\rho(\xi))}$, extending the result in~\cite{CMY}; our result is of interest even if we only consider the easiest, radial case~$\rho(\xi)=\xii$, that is, $A=-\Delta$;
\item in~\textsection\ref{sec:estimates}, we apply the stationary phase lemmas, interpolating them with $L^2-L^2$ and $L^1-L^2$ estimates, at dyadic level, to derive $L^p-L^q$ estimates, with $1\leq p \leq2\leq q\leq \infty$; 
we carry on this analysis distinguishing the zone of low frequencies (Theorem~\ref{thm:mainlow}) and the zone of high frequencies (Theorems~\ref{thm:mainhi} and~\ref{thm:mainhiXi});
\item in~\textsection\ref{sec:models}, we show how our estimates apply to the models of interest;
\item in~\textsection\ref{sec:nonlinear}, we apply our estimates to the nonlinear Boussinesq equation and possibly to other nonlinear models;
\item in~\textsection\ref{sec:theoremprooflow} we give the proof of Theorem~\ref{thm:mainlow};
\item in~\textsection\ref{sec:theoremproofhi} we give the proof of Theorems~\ref{thm:mainhi} and~\ref{thm:mainhiXi};
\item in~\textsection\ref{sec:bneg}, we discuss models with $\rho\omega'(\rho)\to0$ as $\rho\to\infty$, as the Improved Boussinesq Equation;
\item in~\textsection\ref{sec:inter}, we discuss how to deal with models with degeneracies at some point $\rho\in(0,\infty)$, as happens for the plate equation with rotational inertia.
\end{itemize}

%


\section{Hypersurfaces of finite type}\label{sec:finite}

The hypersurface
\[ \Sigma = \{\xi\in\R^n: \ \rho(\xi)=1\}, \]
is smooth and compact as a consequence of the assumption that $\rho:\R^n\setminus\{0\}\to\R_+$ is a positively homogeneous smooth function of degree $1$. Moreover, $\Sigma$ is convex as a consequence of the assumption that $H_{\rho(\xi)}$ is positive semidefinite (the two conditions are equivalent, see~\cite[Proposition 2.1]{DY09}).

Then $\Sigma$ is of finite type $\tau\geq2$ (see~\cite[p.351]{BNW}) if $\tau$ is the least integer such that
\begin{equation}\label{eq:taucond1}
\sum_{j=1}^\tau |\<\eta,\nabla\>^j\rho(\xi)|\geq C_\tau, \quad \xi\in\Sigma,\ \eta\in S^{n-1},
\end{equation}
where $S^{n-1}=\{\eta: \ |\eta|=1\}$ is the unit sphere. By~\cite[Proposition 4.3]{CMY}, for any $\lambda>0$, the above condition is equivalent to
\begin{equation}\label{eq:taucond}
\sum_{j=1}^\tau |\<\eta,\nabla\>^j(\rho(\xi)^\lambda)|\geq C_\tau, \quad \xi\in\Sigma,\ \eta\in S^{n-1}.
\end{equation}
%
By~\cite[Proposition 4.2]{CMY} (see also~\cite[Proposition 2.2]{DY09}), 
$\tau=2$ if, and only if, one of the following is true:
\begin{itemize}
\item the rank of the Hessian of $\rho$ is maximal for any $\xi\neq0$, i.e., it is $n-1$ (since $\rho$ is homogeneous of degree $1$, the rank is at most $n-1$);
\item the Hessian of $\rho^\lambda$ is nonsingular, i.e., $\det H_{\rho^\lambda}\neq0$ for any $\xi\neq0$, for some $\lambda>1$;
\item the Gaussian curvature of $\Sigma$ is nonzero everywhere.
\end{itemize}
In the following, we assume that $\Sigma$ is of finite type.

We stress that if $A$ is as in~\eqref{eq:Aell}, then $\tau\leq 2\ell$ (see \cite[Proposition 2.2]{ZYF2004}). In particular, $\tau=2$ if $A$ is as in~\eqref{eq:operator}. On the other hand, $\tau>2$ if, and only if, the determinant of the Hessian of $\rho(\xi)^{2\ell}$ vanishes at some $\xi\neq0$.

In many equations of physical interest, $A=-\Delta$, then $\rho=\xii$ and~$\tau=2$, but other scenarios may occur. 
The forthcoming Examples~\ref{ex:CMY}, \ref{ex:DFZ} and~\ref{ex:ZYF} are contained in~\cite{ZYF2004}.
\begin{example}\label{ex:CMY}
An example of interest with $\tau=2\ell$, $\ell\geq2$, is the model
\[ u_{tt} + (-1)^\ell \sum_{j=1}^n \partial_{x_j}^{2\ell} u = u_{tt}+A^\ell u, \quad \text{where}\qquad \rho=\sqrt[2\ell]{\xi_1^{2\ell}+\ldots+\xi_n^{2\ell}}. \]
%
The case $\ell=2$ appears in some nonlinear Boussinesq equations~\cite{Sh}.
\end{example}
\begin{example}\label{ex:DFZ}
An example with $\tau=4$ is the following model in $\R^2$: 
\[ u_{tt} +\partial_{x_1}^4u + 6\partial_{x_1}^2\partial_{x_2}^2u + \partial_{x_2}^4u = u_{tt}+ A^2 u, \quad \text{where}\qquad  \rho=\sqrt[4]{\xi_1^4 + 6\xi_1^2\xi_2^2 + \xi_2^4}. \]
To check that $\tau=4$ it is convenient to study~\eqref{eq:taucond} with $\lambda=4$; in particular, to show that $\tau=4$ it is sufficient to fix $\xi_2=-\xi_1$ and $\eta_1=\eta_2$ to get that
\[ \sum_{j=1}^3 |\<\eta,\nabla\>^j(\rho(\xi)^4)|=0. \]
\end{example}
\begin{example}\label{ex:ZYF}
Another example with $\tau=4$ is the following model in $\R^2$:
\[ u_{tt} -\partial_{x_1}^6u - 5\partial_{x_1}^4\partial_{x_2}^2u - \partial_{x_2}^6u = u_{tt}+ A^3 u, \quad \text{where}\qquad \rho=\sqrt[6]{\xi_1^6 + 5\xi_1^4\xi_2^2 + \xi_2^6}. \]
\end{example}
%
%
%
%
%

\section{Stationary phase lemmas}\label{sec:stationary}

We employ a dyadic partition in annuli to study the phase function. For any $N\in 2^\Z$ (that is, $N=2^j$, with $j\in\Z$), we define
\[ A_N = \{ \xi\neq0: \ 2^{-1}N \leq \xii < 2N \}, \]
and we fix $\psi_N(\xi)=\psi(\xi/N)$, where $\psi\in\mathcal C_c^\infty$ with $\supp\psi\subset A_1$, such that $\sum_{N\in 2^\Z} \psi_N =1$. A method to construct such $\psi$, nonnegative, is to fix $\chi\in\mathcal C_c^\infty$ with $\chi=0$ for $\xii\geq1$ and $\chi=1$ for $\xii\leq1/2$, then define $\psi(\xi)=\chi(\xi/2)-\chi(\xi)$. Let
\begin{equation}\label{eq:IN}
I_N=N^{-n-\mu}\,\int_{\R^n} e^{ix\xi-it\omega(\rho(\xi))}\,(\rho(\xi))^\mu\,\psi_N(\rho(\xi))\,d\xi,
\end{equation}
for some $\mu\in\R$. The methodology in~\cite{CMY} (in particular, see~\cite[Remark 1.2]{CMY}) is based on a change of variable and on~\cite[Theorem~B]{BNW} to reduce to the study to a linear oscillating integral:
\[ \begin{split}
I_N
    & =\int_0^\infty e^{-it\omega(Nr)}\,r^{\mu+n-1}\psi(r)\,\int_\Sigma e^{iN|x|r\<x/|x|,\eta\>} \frac{d\sigma(\eta)}{|\nabla \rho(\eta)|}\, dr\\
    & =\sum_\pm \int_0^\infty e^{irN|x|\<x/|x|,\eta_\pm\>-it\omega(Nr)}\,H_\pm(N|x|r)\,r^{\mu+n-1}\psi(r)\, dr,
\end{split}\]
where the unit outer normal vector at $\eta_\pm \in \Sigma$ is $\pm x/|x|$, and
\[ |H_\pm^{(j)}(N|x|r)|\leq C_j\,(1+N|x|r)^{-\frac{n-1}\tau-j}. \]
\begin{remark}\label{rem:Bessel}
If $\Sigma$ is the unit sphere $S^{n-1}=\{\eta\in\R^n: \ |\eta|=1\}$, then a simpler, direct, proof may be given using Bessel functions. Indeed, $\rho(\xi)=\xii$ so that $|\nabla \rho(\eta)|=1$; hence,
\[ \begin{split}
\int_{S^{n-1}} e^{iN|x|r\<x/|x|,\eta\>} \frac{d\sigma(\eta)}{|\nabla \rho(\eta)|}
    & = |S^{n-1}|\,\< \delta_1, e^{iNr\<x,\eta\>} \> = |S^{n-1}|\,\mathscr{F}(\delta_1)(-Nxr) \\
    & = |S^{n-1}|\,|Nxr|^{1-\frac{n}2}\,\mathcal J_{\frac{n}2-1} (N|x|r),
\end{split} \]
where $\delta_1$ is the unit measure on the unit sphere $S^{n-1}$, and the Bessel function $\mathcal J_\gamma(r)$ has the asymptotic expansion for $\gamma\geq0$ and $N|x|\to\infty$,
\[ \mathcal J_{\frac{n}2-1} (N|x|r) = (2/\pi)^{\frac12}\, (N|x|r)^{-\frac12} \cos (N|x|r- (n/2-1)\pi/2-\pi/4) + \textit{O}((N|x|r)^{-\frac32}). \]
Therefore, letting $\eta_\pm = \pm x/|x|$, we may explicitly compute $H_\pm(N|x|r)$; we stress that $\tau=2$.
\end{remark}
We have the following.
\begin{lemma}\label{lem:low}
Let $N_0$ be such that for any $\rho\leq 2N_0$ it holds:
\begin{align}
\label{eq:wprimelow}
& \rho|\omega'(\rho)|\sim \rho^a,\quad \text{for some $a\in\R$;}\\
\label{eq:wboundlow}
&\rho^j|\omega^{(j)}(\rho)|\lesssim \rho^a,\qquad j\geq2;\\
\label{eq:derlow}
&\rho^m|\omega^{(m)}(\rho)|\gtrsim \rho^{\tilde{a}}, \quad \text{for some $m\geq2$ and $\tilde a\geq a$.}
\end{align}
Then, for all $N\leq N_0$ we may estimate
\begin{equation}\label{eq:INgood}
|I_N|\leq C_L\,(1+tN^a)^{-L},
\end{equation}
for any $L>0$, exception given for a finite number of $N$, for which the following estimate holds:
\begin{equation}\label{eq:INlow}
|I_N|\lesssim 1\wedge (tN^a)^{-\frac{n-1}\tau} \left(1\wedge (t\,N^{\tilde{a}})^{-\frac1m} \right).
\end{equation}
If~\eqref{eq:derlow} is removed from the assumptions, the above estimate holds formally setting $m=\infty$, that is, $|I_N|\lesssim 1\wedge (tN^a)^{-\frac{n-1}\tau}$, for any $N\leq N_0$.
\end{lemma}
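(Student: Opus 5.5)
We start from the one-dimensional reduction of $I_N$ recalled above:
\[
I_N=\sum_\pm \int_0^\infty e^{i\Phi_\pm(r)}\,H_\pm(N|x|r)\,r^{\mu+n-1}\psi(r)\,dr,\qquad \Phi_\pm(r)=rN|x|\,\<x/|x|,\eta_\pm\>-t\omega(Nr),
\]
with $r\in[1/2,2]$ on the support of $\psi$. The bounds~\eqref{eq:wprimelow}--\eqref{eq:wboundlow} give
\[
|\partial_r^j (t\omega(Nr))| = tN^j|\omega^{(j)}(Nr)|\lesssim tN^a \quad (j\geq1), \qquad |\partial_r (t\omega(Nr))|\sim tN^a .
\]
Since $N\leq N_0$, we have $Nr\leq 2N_0$, so these bounds hold uniformly in $r$. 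The trivial bound $|I_N|\lesssim 1$ follows from $|H_\pm|\lesssim1$ and the compact support of $\psi$, so everything reduces to the regime $tN^a\gg1$. We distinguish two spatial zones according to whether the linear part $rN|x|\<x/|x|,\eta_\pm\>$ is comparable with $tN^a$ or not. Here $|\<x/|x|,\eta_\pm\>|\sim1$ by compactness and convexity of $\Sigma$, because $\Sigma$ encloses the origin.

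\emph{Non-stationary zone.} Suppose $N|x|\ll tN^a$ or $N|x|\gg tN^a$. Then $|\Phi_\pm'(r)|\gtrsim tN^a+N|x|$, while $|\Phi_\pm^{(j)}|\lesssim tN^a$ for $j\geq2$. We integrate by parts $L$ times with the operator $(i\Phi')^{-1}\partial_r$. Each step gains a factor $(tN^a+N|x|)^{-1}$. The derivatives of the amplitude $H_\pm(N|x|r)$ produce factors $N|x|(1+N|x|r)^{-1}\lesssim1$, and derivatives of $1/\Phi'$ are controlled by the quotient bounds above. This yields $|I_N|\le C_L(1+tN^a)^{-L}$, which is~\eqref{eq:INgood}.

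\emph{Stationary zone.} Suppose $N|x|\sim tN^a$. Here we do not integrate by parts but use the amplitude decay directly: $|H_\pm(N|x|r)|\lesssim (N|x|)^{-\frac{n-1}\tau}\sim (tN^a)^{-\frac{n-1}\tau}$. Together with $r\sim1$, and a van der Corput argument for the oscillatory part, this gives the factor $(tN^a)^{-\frac{n-1}\tau}$.

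To gain the extra factor we invoke~\eqref{eq:derlow}: $|\Phi^{(m)}_\pm(r)|=tN^m|\omega^{(m)}(Nr)|\gtrsim tN^{\tilde a}$. The van der Corput lemma, in the form with amplitude (bounding by $\|H\,r^{\mu+n-1}\psi\|_{L^\infty}+\|\partial_r(\cdot)\|_{L^1}$), then gives the additional factor $(tN^{\tilde a})^{-1/m}$. Combining with the trivial bound gives~\eqref{eq:INlow}. If~\eqref{eq:derlow} is dropped, we only use the amplitude decay, which is the case $m=\infty$.

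\emph{The ``finitely many exceptional $N$''.} The dichotomy between the two zones depends on $x$, so the statement has to be read uniformly in $x$. My interpretation is that for fixed $(t,x)$ only the $N$ with $N^{1-a}\sim t/|x|$ (equivalently $N|x|\sim tN^a$) are stationary. When $a\neq1$ this is a bounded number of dyadic $N$. All other $N$ satisfy~\eqref{eq:INgood}, and this is what matters for summing dyadic pieces later.

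\emph{Main obstacle.} The hardest step is the van der Corput application with $m\geq2$ when only a lower bound on the $m$-th derivative is available. The lower bound in~\eqref{eq:derlow} holds pointwise for the radial variable, and we need it uniformly on $[1/2,2]$. The amplitude derivative $\partial_r H_\pm(N|x|r)$ must remain of size $(N|x|)^{-\frac{n-1}\tau}$, which follows from the stated bounds on $H_\pm^{(j)}$ with $j=1$. The case $a=1$ also needs care, because then the stationary condition $N|x|\sim tN^a$ does not localize $N$ in terms of $t/|x|$, so the finiteness claim is not immediate.
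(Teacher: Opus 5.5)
\paragraph{Overall assessment.} Your proof replaces the paper's citation of Theorem~1.1 of [CMY] with a self-contained dichotomy, and that part is sound. The gap is the one you flag yourself, at $a=1$. The paper's counting argument does not close it, and a counterexample shows it cannot be closed uniformly in $t$.

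\paragraph{What your argument establishes.} When $N|x|\ll tN^a$ or $N|x|\gg tN^a$, you have $|\Phi_\pm'|\gtrsim tN^a+N|x|$ (using $|\<x/|x|,\eta_\pm\>|\sim1$), and quantitative integration by parts gives \eqref{eq:INgood}. When $N|x|\sim tN^a$, the decay of $H_\pm$ and van der Corput with $|\Phi_\pm^{(m)}|\gtrsim tN^{\tilde a}$ give \eqref{eq:INlow}, uniformly in $x$. For $a\neq1$ this even supplies the lower bound on $|\Phi_\pm'|$ that the paper's appeal to ``absence of stationary points'' leaves implicit, and only boundedly many $N$ are exceptional.

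\paragraph{The gap at $a=1$.} This is the case the lemma is really aimed at (Remark~\ref{rem:adeg}; the Boussinesq and viscoelastic models). There your stationary zone $|x|\sim t$ is independent of $N$, and you do not show finiteness. The paper counts differently. It uses the exact critical-point equation $\omega'(N\rho)=t^{-1}|x|\<x/|x|,\eta_\pm\>$ together with $\omega^{(m)}\neq0$ on $(0,2N_0)$. By Rolle's theorem this allows at most $m-1$ roots $s=N\rho$, hence boundedly many dyadic $N$ with a critical point in $\supp\psi$. All other $N$ are then treated by non-stationary phase.

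\paragraph{Why the paper's counting does not close it.} Having no critical point gives $\Phi_\pm'\neq0$, not $|\Phi_\pm'|\gtrsim tN^a$, and the latter is what \eqref{eq:INgood} needs. Take $\rho=|\xi|$, $\omega(\rho)=\rho\sqrt{1+\rho^2}$ (inviscid Boussinesq, so $a=1$, $\tilde a=3$), and $|x|=t$.
\begin{itemize}
\item $\Phi_+'(r)=-tN(\omega'(Nr)-1)\approx-\tfrac32 tN^3r^2$ never vanishes.
\item For every dyadic $N$ with $t^{-1}\ll N\ll t^{-1/3}$, the phase $\Phi_+$ varies only by $O(tN^3)\ll1$ on $\supp\psi$.
\item The $\eta_-$ term and the Bessel remainder are of lower order.
\item Hence $|I_N|\geq c\,(tN)^{-\frac{n-1}2}$; the leading coefficient is a multiple of $\int r^{\mu+\frac{n-1}2}\psi(r)\,dr>0$.
\end{itemize}
This violates \eqref{eq:INgood} for $L>\frac{n-1}2$ at a number of $N$ that grows like $\log t$. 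So for $a=1<\tilde a$, no argument can bound the exceptional set uniformly in $t$.

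\paragraph{What can be salvaged.} The $t$-dependent reading of the lemma is true. Since $|I_N|\lesssim1$, every $N$ with $tN\lesssim1$ satisfies \eqref{eq:INgood} trivially. The remaining $N\in[ct^{-1},N_0]$ are finitely many for each fixed $t$, and they satisfy \eqref{eq:INlow} by your two zone estimates. With that one observation added, your proof covers the lemma in this sense. In the uniform sense, your hesitation at $a=1$ is justified.
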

In the nondegenerate case $\tilde a=a$, \eqref{eq:INlow} reduces to
\begin{equation}\label{eq:INlownodeg}
|I_N|\lesssim 1\wedge (tN^a)^{-\frac{n-1}\tau-\frac1m}.
\end{equation}
\begin{remark}\label{rem:adeg}
It is expected that the degeneracy $\tilde a>a$ appears when $a=1$, for smooth $\omega$. Indeed, if $\omega$ is smooth in $\rho=0$, by Taylor's expansion, $\omega'(\rho)\sim\rho^{a-1}$, so that $\rho^k\omega^{(k)} = \textit{o}(\rho^a)$ for $k\geq2$, if $a=1$. This is the scenario occurring in~\textsection\ref{sec:BQ} and in~\textsection\ref{sec:visco} (see also Appendix~\ref{sec:bneg}).
\end{remark}
\begin{proof}
It is sufficient to apply~\cite[Theorem 1.1]{CMY}, after taking into consideration of the following property. Since we assumed that $\omega^{(m)}$ does not vanish for $\rho\in(0,2N_0)$, it follows that for any given $t>0$ and $x\in\R^n$, the phase function
\[ i\rho N|x|\,\<x/|x|,\eta_\pm\> -it\omega(N\rho),  \]
admits critical points for a finite number of values of $N$, corresponding to the solutions to
\[ \omega'(N\rho) = t^{-1}|x|\,\<x/|x|,\eta_\pm\>.  \]
Therefore, for all values of $N$, but a finite number of them, the estimate in Theorem 1.1 may be improved using the result for oscillatory integrals in absence of stationary points (see~\cite[Chapter 8, Proposition 2.1]{SS}). For the remaining values of $N$, estimate (1.14) in~\cite[Theorem 1.1]{CMY} with $\theta=0$ and with $\theta=1$ gives, respectively, $|I_N|\lesssim 1$ and $|I_N|\lesssim (tN^a)^{-\frac{n-1}\tau}$. On the other hand, estimate (1.15) in~\cite[Theorem 1.1]{CMY} with $\theta=1$ gives $|I_N|\lesssim (tN^a)^{-\frac{n-1}\tau} t^{-\frac1m}\,N^{-\frac{\tilde{a}}m}$. This proves~\eqref{eq:INlow}. If we remove~\eqref{eq:derlow}, then we can only obtain the same result in~\cite[Theorem 1.1]{CMY}, that is, $|I_N|\lesssim 1\wedge (tN^a)^{-\frac{n-1}\tau}$ for any $N\leq 2N_0$.
\end{proof}
\begin{lemma}\label{lem:hi}
Let $N_1$ be such that for any $\rho\geq N_1/2$ it holds:
\begin{align}
\label{eq:wprimehi}
& \rho|\omega'(\rho)|\sim \rho^b,\quad \text{for some $b\in\R$;}\\
\label{eq:wboundhi}
&\rho^j|\omega^{(j)}(\rho)|\lesssim \rho^b,\qquad j\geq2;\\
\label{eq:derhi}
&\rho^m|\omega^{(m)}(\rho)|\gtrsim \rho^{\tilde{b}}, \quad \text{for some $m\geq2$ and $\tilde b\leq b$.}
\end{align}
Then, for all $N\geq N_1$ we may estimate~\eqref{eq:INgood} for any $L>0$, exception given for a finite number of $N$, for which the following estimate holds:
\begin{equation}\label{eq:INhi}
|I_N|\lesssim 1\wedge (tN^b)^{-\frac{n-1}\tau} \left(1\wedge (t\,N^{\tilde{b}})^{-\frac1m} \right).
\end{equation}
If~\eqref{eq:derhi} is removed from the assumptions, the above estimate holds formally setting $m=\infty$, that is, $|I_N|\lesssim 1\wedge (tN^b)^{-\frac{n-1}\tau}$, for any $N\geq N_1$.
\end{lemma}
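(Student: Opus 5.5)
The plan mirrors the proof of Lemma~\ref{lem:low}, with the roles of low and high frequencies exchanged. After the reduction to the linear oscillatory integral via~\cite[Theorem~B]{BNW}, recalled before Remark~\ref{rem:Bessel}, we are left with studying
\[ \int_0^\infty e^{irN|x|\<x/|x|,\eta_\pm\>-it\omega(Nr)}\,H_\pm(N|x|r)\,r^{\mu+n-1}\psi(r)\, dr, \qquad r\in[1/2,2], \]
with $N\geq N_1$. We then invoke~\cite[Theorem 1.1]{CMY}, whose hypotheses on the phase are stated dyadically. On $\supp\psi$ the rescaled phase $\phi_N(r)=t\omega(Nr)$ satisfies the following bounds, coming from~\eqref{eq:wprimehi}, \eqref{eq:wboundhi} and~\eqref{eq:derhi}:
\[ |\phi_N'(r)|\sim tN^b, \qquad |\phi_N^{(j)}(r)|\lesssim tN^b \ \ (j\geq 2), \qquad |\phi_N^{(m)}(r)|\gtrsim tN^{\tilde b}. \]
These are exactly the bounds used in the low-frequency case, with $a,\tilde a$ replaced by $b,\tilde b$. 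The condition $\tilde b\leq b$ plays the role of $\tilde a\geq a$ there, since $N$ is now large and the degenerate scale $tN^{\tilde b}$ is smaller than $tN^b$.

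Next comes the counting of stationary points. Fix $t>0$ and $x$. By~\eqref{eq:derhi}, $\omega^{(m)}$ does not vanish on $(N_1/4,\infty)$, so $\omega'$ is piecewise monotone with at most $m-1$ monotonicity intervals. Hence the equation
\[ \omega'(N r)=t^{-1}|x|\<x/|x|,\eta_\pm\> \]
has at most finitely many solutions $\rho=Nr\geq N_1/2$, with a bound independent of $t$ and $x$. Consequently, only finitely many dyadic $N$ can have a critical point of the full phase in $\supp\psi(\cdot)$; the bound on the number of such $N$ is uniform, since each root lies in at most two annuli.

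For all other $N$, the derivative of the phase $rN|x|\<x/|x|,\eta_\pm\>-t\omega(Nr)$ is bounded away from zero on the support. We then integrate by parts $L$ times, i.e.\ apply the non-stationary phase estimate of~\cite[Chapter 8, Proposition 2.1]{SS}, using the derivative bounds above together with the symbol bounds on $H_\pm$. This gives $|I_N|\leq C_L(1+tN^b)^{-L}$, which is~\eqref{eq:INgood} with $a$ replaced by $b$. For the exceptional $N$, estimate (1.14) of~\cite[Theorem 1.1]{CMY} with $\theta=0,1$ gives $|I_N|\lesssim 1\wedge (tN^b)^{-\frac{n-1}\tau}$. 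Estimate (1.15) with $\theta=1$ (van der Corput of order $m$ in $r$ combined with the decay of $H_\pm$) adds the factor $(tN^{\tilde b})^{-1/m}$, which yields~\eqref{eq:INhi}. Without~\eqref{eq:derhi}, only (1.14) is available, giving the $m=\infty$ version for every $N\geq N_1$.

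The main obstacle is the non-stationary step. One must check that when no critical point is present, the lower bound on the phase derivative is quantitatively comparable to $tN^b$; it should not merely be nonzero. A two-case argument handles this. If $N|x|\not\sim tN^b$, the bound is immediate from the size comparison. The delicate regime is $N|x|\sim tN^b$ with the root just outside the support. There the exceptional set should be enlarged to the neighbouring annuli, which still leaves finitely many $N$, so that on the remaining annuli the derivative is $\gtrsim tN^b$ by the monotonicity of $\omega'$ and~\eqref{eq:wprimehi}.
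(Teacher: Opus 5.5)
Your route is the paper's: reduce through \cite[Theorem~B]{BNW}, apply \cite[Theorem~1.1]{CMY} on the annuli that can contain a root of $\omega'(Nr)=c$, where $c:=t^{-1}|x|\langle x/|x|,\eta_\pm\rangle$, and use non-stationary phase on all other annuli. You rightly single out the quantitative bound $|\Phi'|\gtrsim tN^b$ on the non-exceptional annuli as the crux; the paper's one-line proof, by analogy with Lemma~\ref{lem:low}, simply asserts it. But your justification does not work.

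Since $\Phi'(r)=tN\,(c-\omega'(Nr))$, you need $|c-\omega'(Nr)|\gtrsim N^{b-1}$. Monotonicity of $\omega'$ and \eqref{eq:wprimehi} control only the size of $\omega'$, not this difference. In the degenerate case $\tilde b<b$, which is what the lemma is designed for, the difference can be tiny on annuli far from the root. Take the Klein--Gordon phase $\omega=\sqrt{1+\rho^2}$ of \textsection\ref{sec:KG} ($b=1$, $\tilde b=-1$, $m=2$), with $\rho=|\xi|$, $t\geq1$, and $c=|x|/t=1-\delta$, $\delta$ small. The root is $\rho_0\approx(2\delta)^{-1/2}$. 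For $N\leq\delta^{-1/4}$ one has $c-\omega'(Nr)=\frac1{2N^2r^2}+O(N^{-4})$, hence $\Phi'(r)\approx t/(2Nr^2)\sim tN^{\tilde b}$.

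So for every dyadic $N$ with $t\ll N\leq\delta^{-1/4}$ the phase is essentially constant on $\supp\psi$. There $|I_N|\sim(N|x|)^{-\frac{n-1}2}\sim(tN)^{-\frac{n-1}2}$, since the other branch is $O((tN)^{-L})$, and this is not $\lesssim C_L(1+tN)^{-L}$ once $L>\frac{n-1}2$. These annuli are all far from the root, and their number is unbounded as $\delta\to0$. No enlargement of the exceptional set by boundedly many annuli rescues \eqref{eq:INgood} with $tN^b$; on these annuli $I_N$ genuinely has the size allowed by \eqref{eq:INhi}. Hence in the degenerate case the first assertion of the lemma is not available in the uniform form you claim: boundedly many exceptional $N$, with constants independent of $t,x$. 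That uniform form is also what the proof of Theorem~\ref{thm:mainhi} uses.

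In the nondegenerate case $\tilde b=b$ your enlargement argument does go through, but the input is the lower bound \eqref{eq:derhi}, not \eqref{eq:wprimehi}. For $m=2$, $\omega''$ has a fixed sign on $[N_1/2,\infty)$ and $|\omega''(s)|\gtrsim s^{b-2}$, so
\[ |\omega'(\rho)-\omega'(\rho_0)|=\Big|\int_{\rho_0}^{\rho}\omega''(s)\,ds\Big|\gtrsim\rho^{b-1}\qquad\text{whenever }\rho/\rho_0\notin[1/4,4]. \]
This yields $|\Phi'|\gtrsim tN^b$ on every annulus with $\rho_0\notin[N/8,8N]$, i.e.\ outside a bounded number of annuli. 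The case where $c$ is not in the range of $\omega'$ needs only the obvious modification. For $m>2$ you must also discard the boundedly many annuli meeting the zeros of $\omega''$. You must then replace the pointwise lower bound on $|\omega''|$ by the van der Corput sublevel-set estimate that \eqref{eq:derhi} provides.
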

In the nondegenerate case $\tilde b=b$, \eqref{eq:INhi} reduces to
\begin{equation}\label{eq:INhinodeg}
|I_N|\lesssim 1\wedge (tN^b)^{-\frac{n-1}\tau-\frac1m}.
\end{equation}
\begin{remark}\label{rem:bdeg}
Similarly to Remark~\ref{rem:adeg}, it is expected that the degeneracy $\tilde b<b$ appears when $b=1$ for ``smooth at infinity'' $\omega$, that is, for $\omega'(\rho)=f(1/\rho)$ with $f$ smooth at $r=0$. Indeed, by Taylor's expansion, $\omega'(\rho)\sim\rho^{b-1}$, so that $\rho^k\omega^{(k)} = \textit{o}(\rho^b)$ for $k\geq2$ if $b=1$. This is the scenario occurring in~\textsection\ref{sec:KG}.
\end{remark}
\begin{proof}
The proof is completely analogous to the proof of Lemma~\ref{lem:low}.
\end{proof}
%


\section{Estimates for evolution equations}\label{sec:estimates}

Here and in~\textsection\ref{sec:models}, we assume $A$ as in~\eqref{eq:operatorgen} and $\Sigma$ a convex hypersurface of finite type $\tau$, as in~\textsection\ref{sec:finite}. We are interested in applying Lemmas~\ref{lem:low} and \ref{lem:hi} to derive $L^p-L^q$ estimates, $1\leq p\leq 2\leq q\leq\infty$, for the solution to initial value problems for evolution equations in the form~\eqref{eq:abgeneral}
\begin{equation}\label{eq:CPgen}\begin{cases}
u_{tt} + 2\alpha(A)u_t + \beta(A)u =0, & t>0,\ x \in\R^n,\\
(u,u_t)(0,x)=(u_0,u_1)(x).
\end{cases} \end{equation}
For any $\xi\in\R^n$ such that $\alpha^2(\rho(\xi)^2)<\beta(\rho(\xi)^2)$, we get
\begin{equation}\label{eq:fundamental}
\hat u(t,\xi) = e^{-\alpha(\rho(\xi)^2)t}\, \left(\frac{\sin(t \omega(\rho(\xi)))}{\omega(\rho(\xi))}\,\hat u_1(\xi) + \cos(t\omega(\rho(\xi)))\,\hat u_0(\xi)\right)\,.
\end{equation}
By $L^p-L^q$ estimates we mean estimates of type
\[ \||D|^\nu\partial_t^\ell u(t,\cdot)\|_{L^q}\leq f(t)\,\|(u_0,u_1)\|_{L^p},\quad t>0, \]
where $\nu\in\R$ and $\ell\in\N$, for some $f(t)$, or variant of those estimates. Among those estimates, the so-called dispersive estimates, obtained when $p=q'=q/(q-1)$, are of particular interest to study nonlinear models, though $L^1-L^q$ estimates can also be important in some scenario (see, for instance, \cite{EL19}).

In view of
\[ \partial_t^\ell e^{\pm it\omega-t\alpha} = (\pm i\omega-\alpha)^\ell\,e^{\pm it\omega}\,e^{-t\alpha}, \]
we assume that
\begin{equation}\label{eq:thetalow}
\alpha(\rho^2)\sim \rho^{\theta}, \quad \rho^j|\alpha^{(j)}(\rho^2)|\lesssim \rho^{\theta}, \quad j\geq1,
\end{equation}
for some $\theta\geq0$.
\begin{remark}\label{rem:theta}
Due to the assumption $\alpha^2<\beta$, it follows that $\alpha^2 = \textit{O}(\omega^2)$. In particular, if $\omega(\rho)\sim\rho^a$ as $\rho\to0$, then $a\leq\theta$ and, if $\omega(\rho)\sim\rho^b$ as $\rho\to\infty$, then $\theta\leq b$.
\end{remark}
We aim to estimate in $M_p^q$ the multiplier (see~\eqref{eq:multiplier}) $g\,e^{\pm it\omega-\alpha t}$, where
\begin{equation}\label{eq:g}
g(\xi) = \xii^\nu\,(\pm i\omega-\alpha)^\ell\,,
\end{equation}
for some $\nu\in\R$ is the function that takes into account of $\nu$ derivative in space and $\ell$ derivatives in time.

To localize the multiplier $g\,e^{\pm it\omega-t\alpha}$ at low frequencies, or at high frequencies, we multiply it by
\begin{equation}\label{eq:chi}
\chi_0=\sum_{N\leq N_0} \psi_N(p), \qquad \chi_1=\sum_{N\geq N_1} \psi_N(p),
\end{equation}
respectively. In particular, $\chi_0+\chi_1=1$ if we set $N_1=2N_0$. We estimate $g\,e^{\pm it\omega-t\alpha}\psi_N$ using Lemmas~\ref{lem:low} for $N\leq N_0$, and Lemma~\ref{lem:hi} for $N\geq N_1$.

However, if we replace $ge^{\pm it\omega-t\alpha}$ by
\begin{equation}\label{eq:Phisinc}
g\,e^{-t\alpha}\,(e^{it\omega}-e^{-it\omega}),\quad \text{where}\quad e^{it\omega}-e^{-it\omega}= 2i\sin(t\omega),
\end{equation}
as it happens in~\eqref{eq:fundamental}, we may weaken some assumption in our results, thanks to the cancellations in~\eqref{eq:Phisinc} as $t\omega\to0$.

At low frequencies and at high frequencies, we plan to estimate the function $g$ by a suitable power $\rho^\mu$, in $M_p^p$, in the sense that $\|g\,\rho^{-\mu}\chi_0\|_{M_p^p}\leq C$ (this is trivial if $1<p<\infty$, since it suffices to use Mikhlin-H\"ormander theorem~\ref{thm:Mik} in~\textsection\ref{sec:multipliers}), and similarly when $\chi_0$ is replaced by $\chi_1$. In particular, $\mu=\nu$ when $\ell=0$, otherwise $\mu$ may also depend on $\alpha$ and $\theta$ as in~\eqref{eq:thetalow}.

To prove our estimates, we will employ a dyadic partition. In any dyadic set~$A_N$, we will interpolate $L^2-L^2$, $L^1-L^2$ and $L^1-L^\infty$ estimates. This interpolation (and the fact that the $L^2-L^\infty$ estimate is the dual of the $L^1-L^2$ estimate) allow us to derive $L^p-L^q$ estimates for $1\leq p\leq 2\leq q\leq\infty$. Similarly, it is possible to prove $B_{p,r}^s-B_{q,r}^s$ estimates in Besov spaces, for $1\leq p\leq q\leq\infty$, as in~\cite{CMY}. By the Besov embeddings, these estimates imply the $L^p-L^q$ estimates for $1<p\leq 2\leq q<\infty$, so our result is mostly new for $p=1$ and/or $q=\infty$.

In this manuscript, we do not consider $L^p-L^q$ estimates with $1\leq p\leq q<2$ or its dual range $2<p\leq q\leq\infty$, since we are mostly interested in analyzing the dispersive character of the equations. To deal with those cases, we shall modify the proof, possibly replacing the dyadic approach with an argument based on complex interpolation, as done in~\cite{DAE25} for homogeneous phase functions.
\begin{remark}\label{rem:m2}
In the application of the forthcoming Theorems~\ref{thm:mainlow} and~\ref{thm:mainhi} to the considered models, we will fix $m=2$. This because the use of conditions~\eqref{eq:derlow} and~\eqref{eq:derhi} for $m>2$ does not bring any benefit to the estimates in the case when $\tilde a(m)$ is a nondecreasing function with respect to $m\geq2$, and $\tilde b(m)$ is nonincreasing with respect to $m\geq2$ (a ``highly reasonable'' assumption, verified for all the models considered). Indeed, in this case,
\[\begin{split}
& t^{-\frac1m}N^{-\frac{\tilde a(m)}m}\leq t^{-\frac1m}N^{-\frac{\tilde a(2)}m} \leq 1 \wedge t^{-\frac12}N^{-\frac{\tilde a}2},\qquad \text{for $N\leq1$ and $m\geq3$,}\\
& t^{-\frac1m}N^{-\frac{\tilde b(m)}m} \leq t^{-\frac1m}N^{-\frac{\tilde b(2)}m} \leq 1 \wedge t^{-\frac12}N^{-\frac{\tilde b(2)}2}, \qquad \text{for $N\geq1$ and $m\geq3$.}
\end{split} \]
\end{remark}
\begin{remark}
Our result is also applicable to the initial value problem for Schr\"odinger type equations with potential:
\begin{equation}\label{eq:CPgenShr}\begin{cases}
u_t - i\beta(A)u + \alpha(A)u =0, & t>0,\ x \in\R^n,\\
u(0,x)=u_0(x),
\end{cases} \end{equation}
whose solution verifies, under the assumption $\beta(\rho^2)=\omega(\rho)>0$ and $\alpha(\rho^2)>0$,
\[ \hat u(t,\xi) = e^{i\omega(\rho(\xi))t-\alpha(\rho(\xi)^2)t}\,\hat u_0(\xi). \]
In this case, we may obtain estimates of type
\[ \||D|^\nu\partial_t^\ell u(t,\cdot)\|_{L^q}\leq f(t)\,\|u_0\|_{L^p},\quad t>0. \]
We stress that the restriction on $\theta$ appearing in Remark~\ref{rem:theta} does not appear for~\eqref{eq:CPgenShr}.
\end{remark}

\subsection{Low frequencies}

Localizing at low frequencies using $\chi_0$, we have the following result.
\begin{theorem}\label{thm:mainlow}
For any~$\xi$ with $\rho(\xi)\leq 2N_0$, we assume the following:
\begin{itemize}
\item that~\eqref{eq:wprimelow} and~\eqref{eq:wboundlow} hold for some $a>0$;
\item that~\eqref{eq:derlow} holds for some $m\geq2$ and $\tilde a\geq a$;
\item that~\eqref{eq:thetalow} holds for some $\theta>0$; if $\alpha=0$, we formally set $\theta=\infty$.
\end{itemize}
Let $1\leq p\leq 2\leq q\leq\infty$ and $d=d(p,q)$ as in~\eqref{eq:d}. We define
\begin{equation}\label{eq:kappa}
\kappa = \mu+n\left(\frac1p-\frac1q\right),
\end{equation}
with $\mu\in\R$ such that $\kappa>0$. Then,
\begin{equation}\label{eq:estlow}
\begin{split}
& \| \rho^\mu e^{it\omega(\rho)-t\alpha(\rho^2)}\chi_0(\rho) \|_{M_p^q} \leq C\,(1+t)^{-\delta_\low},\qquad t\geq0,\\
& \delta_\low = \begin{cases}
\frac{\kappa}\theta, & \theta\leq a,\\
\min\left\{\frac\kappa{a},\frac\kappa\theta+\left(1-\frac{a}\theta\right)\frac{n-1}\tau d\right\}, & a\leq\theta\leq\tilde a,\\
\min\left\{\frac\kappa{a},\frac\kappa{\tilde a}+\left(1-\frac{a}{\tilde a}\right)\frac{n-1}\tau d, \frac\kappa\theta + \left(1-\frac{a}\theta\right)\frac{n-1}\tau d + \left(1-\frac{\tilde a}\theta\right)\frac1m\,d \right\}, & \tilde a\leq\theta.
\end{cases}
\end{split}
\end{equation}
Estimate~\eqref{eq:estlow} also holds if $\kappa=0$ (so that $\delta_\low=0$) and $1<p\leq 2\leq q<\infty$.

\bigskip

If $w(\rho)\sim \rho^a$ as $\rho\to0$, then~\eqref{eq:estlow} remains valid replacing $e^{it\omega}$ by $e^{it\omega}-e^{-it\omega}$ and modifying $\delta_\low =\frac{\kappa+a}\theta-1$ if $\theta\leq a$. Also, we may weaken our assumption~$\kappa>0$ to $\kappa>-a$, or $\kappa\geq0$ to $\kappa\geq-a$ if $1<p\leq 2\leq q<\infty$.
\end{theorem}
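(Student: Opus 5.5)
We localize dyadically: write $\chi_0=\sum_{N\le N_0}\psi_N(\rho)$ and estimate each piece $\Phi_N=\rho^\mu e^{it\omega(\rho)-t\alpha(\rho^2)}\psi_N(\rho)$ in $M_p^q$. For a single dyadic block we use three estimates and interpolate. The $L^2\to L^2$ bound is $\|\Phi_N\|_{M_2^2}\lesssim N^\mu e^{-ctN^\theta}$, because $\alpha\sim\rho^\theta$. The $L^1\to L^2$ bound (and dually $L^2\to L^\infty$) is $\|\Phi_N\|_{L^2}\lesssim N^{\mu+n/2}e^{-ctN^\theta}$. The $L^1\to L^\infty$ bound is $\|\mathscr{F}^{-1}\Phi_N\|_{L^\infty}$. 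For the last one we write $e^{-t\alpha(\rho^2)}=e^{-t\alpha(N^2)}\cdot e^{-t(\alpha(\rho^2)-\alpha(N^2))}$ on $A_N$. The second factor is a smooth symbol with derivatives controlled by powers of $1+tN^\theta$, using~\eqref{eq:thetalow}. Lemma~\ref{lem:low} applied to $I_N$ from~\eqref{eq:IN} then gives
\[
\|\mathscr{F}^{-1}\Phi_N\|_{L^\infty}\lesssim N^{\mu+n}e^{-ctN^\theta}\Big(1\wedge (tN^a)^{-\frac{n-1}\tau}\big(1\wedge(tN^{\tilde a})^{-\frac1m}\big)\Big)
\]
for the finitely many exceptional $N$, and rapid decay $(1+tN^a)^{-L}$ for all other $N$.

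Interpolating (Riesz--Thorin), for $1\le p\le2\le q\le\infty$ we obtain
\[
\|\Phi_N\|_{M_p^q}\lesssim N^{\kappa}e^{-ctN^\theta}\,D_N(t)^{d},
\]
where $D_N(t)$ is the bracket above. To see this, first interpolate between $L^1\to L^\infty$ and $L^1\to L^2$ (and dually), then with $L^2\to L^2$; the exponent of the dispersive gain is exactly $d(p,q)$ from~\eqref{eq:d}, and the power of $N$ is $\kappa$ from~\eqref{eq:kappa}.

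Summing over $N\le N_0$, the result reduces to estimating
\[
\sup_{t}\,(1+t)^{\delta_\low}\sum_{N\le N_0}N^\kappa e^{-ctN^\theta}D_N(t)^d .
\]
Since $\kappa>0$, geometric sums are dominated by their largest term, so we simply maximize the function $N\mapsto N^\kappa e^{-ctN^\theta}D_N(t)^d$ over $N\in(0,N_0]$ for large $t$. The thresholds are $N\sim t^{-1/a}$ (onset of the first dispersive factor), $N\sim t^{-1/\tilde a}$ (onset of the second), and $N\sim t^{-1/\theta}$ (onset of exponential damping); their ordering is exactly what produces the case distinctions.

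If $\theta\le a$, damping kicks in no later than dispersion does, so the maximum is at $N\sim t^{-1/\theta}$, giving $t^{-\kappa/\theta}$. If $a\le\theta\le\tilde a$, we compare the value at $N=t^{-1/a}$, which gives $t^{-\kappa/a}$, with the value at $N=t^{-1/\theta}$, which gives $t^{-\kappa/\theta}(tN^a)^{-\frac{n-1}\tau d}=t^{-\frac\kappa\theta-(1-\frac a\theta)\frac{n-1}\tau d}$. In between, the function is a power of $N$, so the maximum sits at an endpoint, which explains the minimum. The case $\tilde a\le\theta$ adds the breakpoint $N=t^{-1/\tilde a}$ and the extra factor $(tN^{\tilde a})^{-d/m}$, and yields the three terms. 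The non-exceptional $N$ contribute $N^\kappa(1+tN^a)^{-L}$, which is harmless. For $t\le1$ the sum is bounded because $\kappa>0$.

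\textbf{Main obstacle.} The delicate step is the $L^1\to L^\infty$ estimate with the dissipative factor present. Lemma~\ref{lem:low} is stated for $e^{-it\omega}$ with a fixed amplitude, so we must make sure that absorbing $e^{-t\alpha}$ into the amplitude only costs $e^{-ctN^\theta}$ times powers of $(1+tN^\theta)$, which the exponential then absorbs. If this becomes problematic, we would instead bound the pieces with $tN^\theta\gtrsim1$ directly by the $L^1\to L^2$ estimate and use the stationary phase only where $tN^\theta\lesssim1$, where $e^{-t\alpha}$ is a harmless symbol.

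\textbf{Endpoint $\kappa=0$.} Here the dyadic sum no longer converges geometrically. For $1<p\le2\le q<\infty$ we would instead use Littlewood--Paley square functions (Besov embeddings as in~\cite{CMY}) and uniform bounds on the blocks.

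\textbf{The $\sin$ variant.} When we replace $e^{it\omega}$ by $e^{it\omega}-e^{-it\omega}$, we use the additional bound $|2\sin(t\omega)|\lesssim tN^a$ when $tN^a\lesssim1$. This adds a factor $1\wedge tN^a$ to each block. It makes the low-$N$ sum converge as soon as $\kappa+a>0$, and for $\theta\le a$ it gives $t^{-\kappa/\theta}\cdot t\cdot t^{-a/\theta}$, which is the stated $\frac{\kappa+a}\theta-1$. To keep the $M_p^q$ norm under control, the multiplier $\sin(t\omega)/(t\omega)$ must be handled as a smooth symbol on each block.
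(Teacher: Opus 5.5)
Your argument is correct in its main line, but it is organized differently from the paper's. The paper first proves the undamped estimate (Theorem~\ref{thm:mainlowdisp}) and never puts the damping inside an oscillatory integral. It factors $\rho^\mu e^{it\omega-t\alpha}\chi_0=(\rho^{\mu-\theta\Xi}e^{it\omega}\chi_0)\,(\rho^{\theta\Xi}e^{-t\alpha})$, uses \eqref{eq:estdiss}, and applies the undamped theorem with $\kappa$ replaced by $\kappa-\Xi\theta$. It then maximizes $\Xi+\delta_\low(\Xi)$ over $\Xi\in[0,\kappa/\theta]$; the three cases come from the sign of $\partial_\Xi$ on the three linear pieces. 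You instead keep $e^{-ctN^\theta}$ in every dyadic block and maximize $N^\kappa e^{-ctN^\theta}D_N(t)^d$ over $N$. Your cases come from where $t^{-1/\theta}$ sits relative to $t^{-1/a}\le t^{-1/\tilde a}$, and your endpoint values reproduce exactly the three formulas in \eqref{eq:estlow}.

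The paper's factorization buys modularity: Lemma~\ref{lem:low} is only used with a $t$-independent amplitude. Yours buys something real at the endpoints. For $\kappa>0$ the dyadic series converges absolutely even when $p=1$ or $q=\infty$. So you need neither the $H^1$/$\BMO$ estimates \eqref{eq:estlowdispH1} nor the $L^1\to H^1$ smoothing \eqref{eq:estdissH1} from \cite{DAE25}. The paper is forced into these because its optimal choice (e.g.\ $\Xi\theta=\kappa$ when $\theta\le a$) lands on $\kappa-\Xi\theta=0$. The price is a damped stationary-phase bound, and it does hold. The factor $e^{-t\alpha(N^2r^2)}$ depends only on $r$, so it enters only the one-dimensional $r$-integral. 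There its $k$-th derivatives on $[1/2,2]$ are $\lesssim(1+tN^\theta)^k e^{-ctN^\theta}$, and these polynomial losses are absorbed by the exponential.

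Some corrections:
\begin{enumerate}
\item Your splitting $e^{-t\alpha(N^2)}\,e^{-t(\alpha(\rho^2)-\alpha(N^2))}$ is not right as stated. On the inner half of the annulus the second factor can be as large as $e^{ctN^\theta}$ (e.g.\ for $\alpha(\rho^2)=\rho^\theta$), so it is not controlled by powers of $1+tN^\theta$. State the derivative bound for the whole factor instead, as above.
\item Your fallback would not suffice. When $\theta>a$ the rate is attained at $N\sim t^{-1/\theta}$. Dropping the dispersive factor on the blocks with $tN^\theta\ge1$ only yields $t^{-\kappa/\theta}$, so the damped stationary phase is needed, not optional.
\item The passage from the sum to a maximum is justified by the finiteness of the exceptional set in Lemma~\ref{lem:low}; the paper also takes a supremum there. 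It is not justified by ``geometric sums are dominated by the largest term''. On an interval where the exponent of $N$ vanishes, e.g.\ $\kappa=a\frac{n-1}\tau d$ on $[t^{-1/a},t^{-1/\theta}]$, the sum of the weak bounds carries an extra $\log t$.
\item For $\kappa=0$, your Littlewood--Paley/Besov argument with uniform block bounds works. The paper instead treats the piece $N\le t^{-1/a}$ with Hardy--Littlewood--Sobolev and Mikhlin. The same argument also covers the case $\kappa=-a$ of the $\sin$ variant, which you did not mention.
\end{enumerate}
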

We postpone the proof of Theorem~\ref{thm:mainlow} to~\textsection\ref{sec:theoremprooflow}.
\begin{remark}
We may include the case $a=1$ in Theorem~\ref{thm:mainlow}, which is excluded in~\cite[Theorem 1.2]{CMY}.
\end{remark}

We notice that in the pure nondegenerate case $\tilde a=a$ and $m=2$, $\delta_\low$ in~\eqref{eq:estlow} reduces to
%
\begin{equation}\label{eq:estlow0deg}
\delta_\low = \begin{cases}
\frac{\kappa}\theta, & \theta\leq a,\\
\min\left\{\frac\kappa{a}, \frac\kappa\theta + \left(1-\frac{a}\theta\right)\left(\frac{n-1}\tau+\frac12  \right)d \right\}, & a\leq\theta.
\end{cases}
\end{equation}
In particular, when $\tau=2$, we get the decay $t^{-\min\left\{\frac{\kappa}{a},\frac\kappa\theta + \left(1-\frac{a}\theta\right)_+\frac{n}2d\right\}}$ in~\eqref{eq:estlow0deg}.
\begin{remark}\label{rem:decaylow}
If $n=1$, then~\eqref{eq:estlow} reads as
\[ \| \rho^\mu e^{it\omega(\rho)-t\alpha(\rho^2)}\chi_0(\rho)\|_{M_p^q} \leq C\, (1+t)^{-\min\left\{\frac{\kappa}{\tilde a}, \frac\kappa\theta + \left(1-\frac{\tilde a}\theta\right)_+\frac1md\right\}},\qquad t\geq0. \]
On the other hand, if $n\geq2$, we stress that
\[ \frac\kappa{\tilde a}+\left(1-\frac{a}{\tilde a}\right)\frac{n-1}\tau d, \]
is a convex combination of the powers $\frac\kappa{a}$ and $\frac{n-1}\tau d$; the first power corresponds to a pure scaling decay, while the second one corresponds to a dispersive-type decay.
\end{remark}
%
To emphasize the influence of the dissipation, we distinguish three intervals for $\theta$.
\begin{itemize}
\item if $\theta\leq a$, then $\delta_\low=\kappa/\theta$ (the case $\theta<a$ is only relevant when $\rho\omega'(\rho)=\textit{o}(\omega)$, see Remark~\ref{rem:theta}, as in the forthcoming models in~\textsection\ref{sec:platemass} and in~ \textsection\ref{sec:KG}), or $\delta_\low = (\kappa+a)/\theta -1$ if $\omega(\rho)\sim\rho^a$ and we consider $g(e^{it\omega}-e^{-it\omega})$;
\item if $a\leq\theta\leq\tilde a$, then
\begin{equation}\label{eq:deltalowtheta2}
\delta_{\low} = \begin{cases}
\frac\kappa{a}, & \frac{\kappa}{a}\leq\frac{n-1}\tau d,\\
\frac\kappa\theta + \frac{n-1}\tau d\left(1-\frac{a}\theta\right), & \frac{\kappa}{a}\geq\frac{n-1}\tau d;
\end{cases} \end{equation}
\item if $\theta\geq\tilde a$, then
\begin{equation}\label{eq:deltalowtheta3}
\delta_\low = \begin{cases}
\frac\kappa{a}, & \frac{\kappa}{a}\leq\frac{n-1}\tau d,\\
\frac{\kappa}{\tilde a}+\left(1-\frac{a}{\tilde a}\right)\frac{n-1}\tau d, & 0\leq\frac{\kappa}{a}-\frac{n-1}\tau d\leq\frac1m\,\frac{\tilde a}{a} d,\\
\frac\kappa\theta + \left(1-\frac{a}\theta\right)\frac{n-1}\tau d + \left(1-\frac{\tilde a}\theta\right)\frac1md, & \frac{\kappa}{a}\geq\left(\frac{n-1}\tau +\frac1m\,\frac{\tilde a}{a}\right) d.
\end{cases} \end{equation}
In particular, when there is no damping, formally putting $\theta=\infty$, the last case in~\eqref{eq:deltalowtheta3} reads as
\begin{equation}\label{eq:deltaundamped}
\delta_\low = \left(\frac{n-1}\tau +\frac1m\right)d,\qquad \text{if}\quad \frac{\kappa}{a}\geq\left(\frac{n-1}\tau +\frac1m\,\frac{\tilde a}{a}\right) d.
\end{equation}
\end{itemize}
%

%
%

\subsection{High frequencies}

Localizing at high frequencies, we assume $b>0$. We postpone the discussion of the case $b<0$ that appears in some model as the Improved Boussinesq equation, to~\textsection\ref{sec:bneg}.

Moreover, we shall distinguish the cases of short time $t\in(0,1]$ and large time $t\geq1$.

For the ease of reading, we first consider the case with no damping $\alpha=0$.

According to Remark~\ref{rem:bdeg}, it is expected that a degeneracy appears if $b=1$, so that $\tilde b\leq0$. For this reason, for the sake of brevity, we assume either $\tilde b=b$ (no degeneracy) or $\tilde b<0<b$. The case $0\leq\tilde b<b$ may still be treated with minor modifications in the proof of Theorem~\ref{thm:mainhi}.
\begin{theorem}\label{thm:mainhi}
For any~$\xi$ with $\rho(\xi)\geq N_1/2$, we assume the following:
\begin{itemize}
\item that~\eqref{eq:wprimehi} and~\eqref{eq:wboundhi} hold for some $b>0$;
\item that~\eqref{eq:derhi} holds for some $m\geq2$ and either $\tilde b=b$ or $\tilde b<0$.
\end{itemize}
Let $1\leq p\leq 2\leq q\leq\infty$ and $d=d(p,q)$ as in~\eqref{eq:d}. We set~$\kappa$ as in~\eqref{eq:kappa} and we assume that $\mu\in\R$ is such that $\kappa>0$. If $\tilde b=b$, then
\begin{equation}\label{eq:esthi0deg}
\| \rho^\mu e^{it\omega(\rho)}\chi_1(\rho) \|_{M_p^q} \leq \begin{cases}
C\,t^{-\left(\frac{n-1}\tau + \frac1m\right)d}, & t\geq1, \\
C\,t^{-\frac{\kappa}{b}},& t\in(0,1],
\end{cases}
\end{equation}
provided that
\begin{equation}\label{eq:reg}
\frac\kappa{b} \leq \left(\frac{n-1}\tau + \frac1m\right) d.
\end{equation}
In the degenerate case $\tilde b<0<b$, then
\begin{equation}\label{eq:esthi}
\| \rho^\mu e^{it\omega(\rho)}\chi_1(\rho) \|_{M_p^q} \leq \begin{cases}
C\,t^{-\min\left\{\left(\frac{n-1}\tau+\frac1m\right)d, \frac\kappa{\tilde b}+\left(1+\frac{b}{-\tilde b}\right)\frac{n-1}\tau d\right\}}, & t\geq1, \\
C\,t^{-\frac{\kappa}{b}},& t\in(0,1],
\end{cases}
\end{equation}
provided that
\begin{equation}\label{eq:shrink}
\frac\kappa{b} \leq \frac{n-1}\tau d.
\end{equation}
Estimate~\eqref{eq:esthi0deg} and~\eqref{eq:esthi} also holds if $\kappa=0$ and $1<p\leq 2\leq q<\infty$, while if $\kappa<0$ they hold for $1\leq p\leq 2\leq q\leq\infty$, replacing $C\,t^{-\frac{\kappa}{b}}$ by $C$ for $t\in(0,1]$.
\end{theorem}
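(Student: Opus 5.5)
The plan is to decompose $\chi_1=\sum_{N\ge N_1}\psi_N(\rho)$ dyadically and estimate each piece $\rho^\mu e^{it\omega(\rho)}\psi_N(\rho)$ in $M_p^q$ by interpolation. For each dyadic $N$ three bounds are available.
\begin{itemize}
\item The trivial $M_2^2$ bound is $N^\mu$.
\item The $M_1^2$ (and dual $M_2^\infty$) bound is $N^{\mu+n/2}$, obtained from the $L^2$ norm of the symbol on $A_N$.
\item The $M_1^\infty$ bound is $N^{\mu+n}|I_N|$, where $I_N$ is the integral in \eqref{eq:IN} and is controlled by Lemma~\ref{lem:hi}.
\end{itemize}
Riesz--Thorin interpolation among these gives, for $1\le p\le2\le q\le\infty$,
\[
\|\rho^\mu e^{it\omega}\psi_N\|_{M_p^q}\lesssim N^{\kappa}\,\bigl(\sup|I_N|\bigr)^{d},
\]
with $\kappa$ as in \eqref{eq:kappa} and $d=d(p,q)$. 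This is the usual scheme: the exponent $d$ measures how far $(p,q)$ lies from the trivial $L^2$ line.

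Next I sum over $N\ge N_1$, using Lemma~\ref{lem:hi}. For all but finitely many $N$ one has the rapid decay $(1+tN^b)^{-L}$, so those contribute $\sum_N N^\kappa(tN^b)^{-Ld}$. For $t\le1$ this sum is $\lesssim t^{-\kappa/b}$, taking $L$ large and using $\kappa>0$; for $t\ge1$ it is $\lesssim t^{-Ld}$, which is harmless. The finitely many exceptional $N$ (uniformly bounded in number) are treated individually with \eqref{eq:INhi}.

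In the nondegenerate case $\tilde b=b$, the bound is $N^\kappa\bigl(1\wedge (tN^b)^{-(\frac{n-1}\tau+\frac1m)}\bigr)^d$. The function $N\mapsto N^\kappa (tN^b)^{-(\frac{n-1}\tau+\frac1m)d}$ is nonincreasing exactly when \eqref{eq:reg} holds. Hence:
\begin{itemize}
\item for $t\ge1$, its maximum over $N\ge N_1$ is at $N\sim N_1$, giving $t^{-(\frac{n-1}\tau+\frac1m)d}$;
\item for $t\le1$, the maximum over $N$ of $N^\kappa\min\{1,(tN^b)^{-\dots}\}$ is attained at $N\sim t^{-1/b}$, giving $t^{-\kappa/b}$.
\end{itemize}

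In the degenerate case $\tilde b<0<b$, the factor $(tN^{\tilde b})^{-1/m}$ grows in $N$, and for $N\ge t^{1/|\tilde b|}$ it is capped at $1$. We therefore maximize
\[
N^\kappa (tN^b)^{-\frac{n-1}\tau d}\bigl(1\wedge (tN^{\tilde b})^{-\frac1m}\bigr)^d .
\]
Condition \eqref{eq:shrink} makes $N^{\kappa-b\frac{n-1}\tau d}$ nonincreasing. For $t\ge1$ the candidates are therefore the endpoint $N\sim N_1$, which gives $t^{-(\frac{n-1}\tau+\frac1m)d}$, and the transition point $N=t^{-1/\tilde b}$. Substituting the latter gives
\[
t^{-\kappa/\tilde b}\,t^{-(1+b/(-\tilde b))\frac{n-1}\tau d}
\]
(signs to be checked), and the minimum of the two exponents appears as in \eqref{eq:esthi}. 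For $t\le1$ one argues as before to get $t^{-\kappa/b}$.

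The main obstacle is the endpoint $p=1$ or $q=\infty$: summing dyadic $M_p^q$ norms requires an $\ell^1$-summable bound, not just a supremum over $N$. Away from the exceptional $N$ the bounds decay geometrically, so this is fine. The delicate contributions are the finitely many critical $N$ and, in the borderline equality case of \eqref{eq:reg} or \eqref{eq:shrink}, the possibly flat range. I would handle these using the finiteness of the exceptional set: its cardinality is bounded independently of $t$, since by monotonicity of $\omega'$ the critical frequencies solve $\omega'(N\rho)=t^{-1}|x|\langle\cdot\rangle$.

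For $\kappa=0$ with $1<p\le2\le q<\infty$, I would instead use Littlewood--Paley square functions to sum the pieces without loss. For $\kappa<0$, the series $\sum_N N^\kappa$ converges, which gives the bound $C$ for $t\le1$.
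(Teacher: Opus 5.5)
\textbf{Where the proposal breaks.} Your outline reproduces the paper's proof essentially step for step: the dyadic pieces, the interpolation bound \eqref{eq:Mpq}, the thresholds $t^{-1/b}$ and $t^{1/(-\tilde b)}$, and maximizing $N^\kappa(1\wedge\varphi^d)$ at endpoints or transition points. Your exponent computations, including the sign check at $N=t^{-1/\tilde b}$, are correct.

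The gap is in the step you use to get past what you call the main obstacle, and the paper's proof makes exactly the same move. The claim is that all but a bounded number of dyadic pieces obey the rapid decay \eqref{eq:INgood}, so the remaining ones can be controlled by a maximum instead of a sum. Bounded cardinality is true for each fixed $x$, but it is the wrong uniformity. The critical scales solve $\omega'(Nr)=t^{-1}|x|\langle x/|x|,\eta_\pm\rangle$ and so move with $x$, whereas \eqref{eq:Mpq} uses $\sup_x|I_N(x,t)|$ for each $N$ separately. For every $N$ there are points $x$ with $|x|\sim tN^{b-1}$ at which $\xi\mapsto x\cdot\xi-t\omega(\rho(\xi))$ has a critical point on the support of $\psi_N(\rho)$. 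Consequently $\sup_x|I_N(x,t)|$ satisfies \eqref{eq:INgood} for no $N$: it decays only polynomially in $tN^b$, at the rate \eqref{eq:INhi} in model cases. Hence no dyadic piece has a rapidly decaying $M_p^q$ norm, and you must sum $N^\kappa(1\wedge\varphi^d)$ over all $N\ge N_1$.

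\textbf{What survives.} Summation still yields the theorem whenever the resulting geometric series converge, i.e.\ under strict inequality in \eqref{eq:reg} or \eqref{eq:shrink}. There is one exception: in the degenerate case with $t\ge1$, the range $N_1\le N\le t^{1/(-\tilde b)}$ costs a factor $\log t$ at the single value $\kappa=b\frac{n-1}\tau d+\tilde b\frac dm$, where the two exponents in \eqref{eq:esthi} coincide.

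For $1<p\le2\le q<\infty$ your Littlewood--Paley idea is the right repair. It needs only $\sup_N$ of the dyadic $M_p^q$ norms, so if you apply it for every $\kappa$ (not just $\kappa=0$) it recovers all cases, including equality.

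At $p=1$ or $q=\infty$ with equality, however, the gap cannot be closed, because the claimed bound is false. Take $\rho=|\xi|$ and $\omega=\rho^2$, so $b=\tilde b=2$ and $m=\tau=2$. Take also $p=1$, $2<q<\infty$ and $\mu=-n/q$. Then $\kappa=n(1-2/q)$ and \eqref{eq:reg} holds with equality. Yet stationary phase shows that the kernel of $\rho^\mu e^{it\omega}\chi_1$ has modulus $\sim t^{-n/2}(|x|/t)^{-n/q}$ for $|x|\gg t$. So the kernel is not in $L^q$, and the multiplier is not in $M_1^q$ for any $t>0$; dually, the same failure occurs for $1<p<2$, $q=\infty$.

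So at the endpoints your argument, like the paper's, only justifies the statement under strict inequality in \eqref{eq:reg} and \eqref{eq:shrink}, and in the degenerate case only up to the $\log t$ loss at the value above.
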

We postpone the proof of Theorem~\ref{thm:mainhi} to~\textsection\ref{sec:theoremproofhi}.

%
\begin{remark}
We stress that the possible degeneracy of $w''$ as $\rho\to\infty$, in the case $\tilde b<0<b$, impacts on both the decay rate and the region $(p,q)$ such that $\rho^\mu e^{it\omega(\rho)}\chi_1\in M_p^q$, namely, there is a loss of regularity, in view of~\eqref{eq:shrink}. This loss is consistent with the known results for the Klein-Gordon equation, for which the regularity is the same as per the massless wave equation, see later, \textsection\ref{sec:KG}, and~\cite[Theorem A]{MarshallStrauss1980}.
\end{remark}

\begin{remark}\label{rem:nondeg}
In the nondegenerate case $\tilde a=a$ and $\tilde b=b>0$, without damping ($\alpha=0$), by Theorems~\ref{thm:mainlow} and~\ref{thm:mainhi} we get
\begin{equation}\label{eq:best}
\| \rho^\mu e^{it\omega(\rho)}\|_{M_p^q} \lesssim \begin{cases}
t^{-\frac{\kappa}{b}}, & t\in(0,1], \\
t^{-\min\left\{\frac{\kappa}{a},\left(\frac{n-1}\tau +\frac1m\right)d\right\}}, & t\geq1,
\end{cases}
\end{equation}
consistently with the result in~\cite[Theorem 3.4]{CMY} for $q=p'$ and $p\in(1,2]$, when $m=2$.
\end{remark}
The presence of a dissipation generates an exponential decay in time at high frequencies, since $e^{-t\alpha(\rho^2)}\leq e^{-ct}$ for $\rho\geq N_1/2$, due to $\alpha\sim\rho^\theta$, with $\theta\geq0$.

Moreover, if $\theta>0$, the dissipation also generates a smoothing effect.
\begin{theorem}\label{thm:mainhiXi}
In the same assumptions of Theorem~\ref{thm:mainhi}, assume that~\eqref{eq:thetalow} holds for some $\theta\in(0,b]$ for any $\rho\geq N_0/2$. Then
\begin{equation}\label{eq:esthi0Xideg}
\| \rho^\mu e^{it\omega(\rho)-t\alpha(\rho^2)}\chi_1(\rho) \|_{M_p^q} \leq \begin{cases}
C\,e^{-ct}, & t\geq1, \\
C\,t^{-\Xi-\frac{\kappa-\Xi\theta}{b}},& t\in(0,1],
\end{cases}
\end{equation}
where
\begin{equation}\label{eq:Xi}
\Xi\theta = \begin{cases}
\left(\kappa-\left(\frac{n-1}\tau + \frac1m\right) db\right)_+, & \tilde b=b,\\
\left( \kappa-\frac{n-1}\tau db\right)_+, & \tilde b<0<b.
\end{cases}
\end{equation}
\end{theorem}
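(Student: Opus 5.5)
The plan is to trade part of the power $\rho^\mu$ for time decay, using the dissipative factor, and then apply Theorem~\ref{thm:mainhi} to what is left. The point is that $\rho^{\Xi\theta}e^{-t\alpha(\rho^2)}$ is of size $t^{-\Xi}$ uniformly in $\rho$. Fix $\Xi$ as in~\eqref{eq:Xi} and put $\mu'=\mu-\Xi\theta$, so that $\kappa'=\kappa-\Xi\theta$. By construction, $\kappa'$ satisfies~\eqref{eq:reg} when $\tilde b=b$, and~\eqref{eq:shrink} when $\tilde b<0<b$. We then factor
\[ \rho^\mu e^{it\omega-t\alpha}\chi_1 = \big(\rho^{\mu'}e^{it\omega}\chi_1\big)\,\big(\rho^{\Xi\theta}e^{-t\alpha(\rho^2)}\tilde\chi_1\big), \]
where $\tilde\chi_1$ is a slightly enlarged cutoff equal to $1$ on $\supp\chi_1$. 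The first factor is controlled in $M_p^q$ by Theorem~\ref{thm:mainhi}: for $t\in(0,1]$ it gives $t^{-\kappa'/b}$ if $\kappa'>0$. If $\kappa'\le 0$, the final remark of Theorem~\ref{thm:mainhi} gives a constant instead; note $\kappa'=0$ only occurs when $\Xi>0$, and then we take $p=1$ or $q=\infty$ with care. The second factor we bound in $M_p^p$, or $M(H^1,H^1)$, by $Ct^{-\Xi}$ for $t\in(0,1]$. Multiplying the two bounds gives $t^{-\Xi-(\kappa-\Xi\theta)/b}$.

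For the dissipative factor we rescale. Writing $s=t^{1/\theta}\rho$, we have $t^{\Xi}\rho^{\Xi\theta}e^{-t\alpha(\rho^2)}\sim s^{\Xi\theta}e^{-c s^\theta}$, and the symbol bounds in~\eqref{eq:thetalow} show that this function satisfies Mikhlin--H\"ormander estimates uniformly in $t\in(0,1]$. Theorem~\ref{thm:Mik} then gives boundedness on $L^p$ for $1<p<\infty$ and on $H^1$. The endpoint $p=1$ is more delicate. Here we instead invoke the result quoted in the introduction (\cite[Corollary 3.1]{DAE25}): for $\Xi>0$, the multiplier $\alpha^{\Xi}e^{-t\alpha}$ maps $L^1$ into $H^1$ with norm $\lesssim t^{-\Xi}$. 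We then compose it with the $H^1\to L^q$ bound for the dispersive factor, which we read off from the proof of Theorem~\ref{thm:mainhi} (compare Theorem~\ref{thm:mainlowdisp}). The case $q=\infty$ follows by duality with $\BMO$. If $\Xi=0$, the statement is just Theorem~\ref{thm:mainhi} combined with $|e^{-t\alpha}|\le1$, and the product is still handled dyadically.

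For $t\ge1$ we use the lower bound $\alpha\gtrsim\rho^\theta\ge c>0$ on $\supp\chi_1$. We write $e^{-t\alpha}=e^{-\alpha/2}\,e^{-(t-1/2)\alpha}$. The factor $e^{-(t-1/2)\alpha}\rho^{\theta'}$ is a Mikhlin multiplier with norm $\lesssim e^{-ct}$. Meanwhile, $\rho^\mu e^{it\omega}e^{-\alpha/2}\chi_1$ is bounded, since it equals the $t=1/2$-type smoothing estimate just proved, applied with time-dependent phase. More simply, we sum the dyadic pieces, using the pointwise bounds of Lemma~\ref{lem:hi} with an extra factor $e^{-ctN^\theta}$, which makes the dyadic sum converge and produces $e^{-ct}$.

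The main obstacle is the endpoint case $p=1$ or $q=\infty$ with $\Xi>0$. There, the dispersive factor with $\kappa'$ sitting exactly on the borderline of~\eqref{eq:reg} or~\eqref{eq:shrink} is bounded only from $H^1$ to $L^q$, not from $L^1$. This is where the $L^1\to H^1$ property of $\rho^{\Xi\theta}e^{-t\alpha}$ must be used, together with tracking its $t^{-\Xi}$ scaling.
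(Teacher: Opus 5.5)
Your proposal is correct and follows the paper's proof. You factor off $\rho^{\Xi\theta}e^{-t\alpha}$ and bound it by $t^{-\Xi}$ through \eqref{eq:estdiss} (or \eqref{eq:estdissH1} at the endpoints). You then apply Theorem~\ref{thm:mainhi} with $\kappa$ replaced by $\kappa-\Xi\theta$, which the choice \eqref{eq:Xi} places exactly within \eqref{eq:reg} or \eqref{eq:shrink}. For $t\geq1$ the paper simply reuses this factorization with $\|\alpha^\Xi e^{-t\alpha}\chi_1\|_{M_q^q}\lesssim e^{-ct}$, so your splitting $e^{-\alpha/2}e^{-(t-1/2)\alpha}$ is not needed.

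Two small corrections:
\begin{enumerate}
\item The passage through $H^1$ is required only when $\kappa-\Xi\theta=0$. When $\kappa-\Xi\theta>0$ sits on the borderline, Theorem~\ref{thm:mainhi} already holds from $L^1$, since \eqref{eq:reg} and \eqref{eq:shrink} allow equality.
\item The case $(p,q)=(1,\infty)$ is self-dual, so duality does not reduce it. As in the proof of Theorem~\ref{thm:mainlow}, you must split $e^{-t\alpha}=e^{-t\alpha/2}e^{-t\alpha/2}$ and sandwich the $H^1\to\BMO$ bound between $L^1\to H^1$ and its dual $\BMO\to L^\infty$.
\end{enumerate}
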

We postpone the proof of Theorem~\ref{thm:mainhiXi} to~\textsection\ref{sec:theoremproofhi}.

We notice that $\Xi=0$ in~\eqref{eq:Xi} if either $\tilde b=b$ and~\eqref{eq:reg} holds, or $\tilde b<0<b$ and~\eqref{eq:shrink} holds.
\begin{remark}
In Theorem~\ref{thm:mainhiXi} we assumed $\theta\in(0,b]$, in view of Remark~\ref{rem:theta}. Indeed, when $\rho\,\omega'(\rho)\sim \rho^b$ for some $b>0$ as $\rho\to\infty$, it generally happens (under reasonable regularity assumptions) that $\omega(\rho)\sim \rho^b$ as well. 
\end{remark}

\section{Application to PDE}\label{sec:models}

In this section, we consider the application of Theorems~\ref{thm:mainlow}, \ref{thm:mainhi}, and~\ref{thm:mainhiXi}, to various models of PDE. For each model, we compute the parameters $a,\tilde a$ appearing in Theorem~\ref{thm:mainlow}, and the parameters $b,\tilde b$ appearing in Theorems~\ref{thm:mainhi} and~\ref{thm:mainhiXi}. We will always fix $m=2$ (see Remark~\ref{rem:m2}).

We anticipate in the following table the values of the parameters $a,\tilde a,b,\tilde b$ in the principal models that we discuss.

\begin{center}
\begin{tabular}{|l|l|c|c|c|c|c|}
  \hline
  Model & equation & \textsection & $a$ & $\tilde a$ & $b$ & $\tilde b$ \\
  \hline
  Plate & \eqref{eq:plate} & \ref{sec:plate} & $2$ & $2$ & $2$ & $2$ \\
  Plate with mass & \eqref{eq:platemass} & \ref{sec:platemass} & $4$ & $4$ & $2$ & $2$ \\
  Boussinesq & \eqref{eq:BQ} & \ref{sec:BQ} & $1$ & $\textbf{3}$ & $2$ & $2$ \\
  Viscoelastic wave & \eqref{eq:VW} & \ref{sec:visco} & $1$ & $\textbf{3}$ & $--$ & $--$ \\
  Klein-Gordon & \eqref{eq:KG} & \ref{sec:KG} & $2$ & $2$ & $1$ & $\textbf{-1}$ \\

  \hline
\end{tabular}
\end{center}
We first briefly consider two plate models that are not degenerate, so they are not the main objective of our studies. Still, our analysis shows the interaction of dispersion and dissipation in two easier cases.

Then we consider Boussinesq and viscoelastic wave models, that are degenerate at low frequencies, and finally the Klein-Gordon equation, that is degenerate at high frequencies.

For given $\ell\in\N$ and $\nu,\nu_0\in\R$, we plan to derive $L^p-L^q$ estimates in the form
\[ \||D|^\nu \partial_t^\ell u (t,\cdot) \|_{L^q} \lesssim \varphi(t)\,\|(\<D\>^{\nu_0} u_0,\<D\>^{\nu_0}\mathscr{F}^{-1}(\omega^{-1}\,\hat u_1))\|_{L^p}, \]
for a suitable $\varphi(t)$. In order to do this, we consider separately low and high frequencies.

We fix $N_1=2N_0$ in~\eqref{eq:chi} so that $\chi_0+\chi_1=1$ and we apply Theorems~\ref{thm:mainlow}, \ref{thm:mainhi} and~\ref{thm:mainhiXi}, with $\mu=\mu(\nu,\ell)$ such that $\xii^\nu\,|i\omega+\alpha|^\ell\approx \rho^\mu$ at low frequencies, and $\mu=\mu(\nu-\nu_0,\ell)$ such that $\xii^{\nu-\nu_0}\,|i\omega-\alpha|^\ell \approx \rho^{\mu}$ at high frequencies.
\begin{assumption}\label{ass:kappa}
Having in mind that $\kappa=n(1/p-1/q)+\mu$, in the following, we write $\kappa_\low$ and $\kappa_\high$ to distinguish low and high frequencies. We recall that Theorem~\ref{thm:mainlow} is applicable if $\kappa_\low>0$, or $\kappa_\low\geq0$ if $1<p\leq 2\leq q<\infty$. When $u_0=0$, we may relax the assumption to $\kappa_\low>-a$, or to $\kappa_\low\geq-a$ if $1<p\leq 2\leq q<\infty$. Similarly, we assume for brevity that $\kappa_\high\geq0$ if $1<p\leq 2\leq q<\infty$, or $\kappa_\high>0$ if $p=1$ or $q=\infty$. We will omit to write the assumptions on $\kappa_\low$ and $\kappa_\high$ in every model.
\end{assumption}
For every model, we will explicitly write the regularity restriction from above on $\kappa_\high$ given by~\eqref{eq:reg} or, respectively, \eqref{eq:shrink}, if $\tilde b=b>0$ or, respectively, $\tilde b<0<b$, that comes into play when there is no dissipation or when the dissipation produces no smoothing ($\theta=0$ in~\eqref{eq:thetalow}).

\subsection{The plate equation}\label{sec:plate}

We consider the initial value problem for the damped plate equation~\eqref{eq:DPl}:
\begin{equation}\label{eq:plate}\tag{P}\begin{cases}
u_{tt}+A^2 u + 2\e A u_t =0, & x\in\R^n, \ t>0,\\
(u,u_t)(0,x)=(u_0,u_1)(x), & x\in\R^n.
\end{cases}\end{equation}
With the notation in~\eqref{eq:abgeneral}, $\alpha=\e\rho^{2}$ and $\beta=\rho^4$, so that $\omega=\sqrt{1-\e^2}\,\rho^2$ is homogeneous; hence, with the notation in Theorems~\ref{thm:mainlow} and~\ref{thm:mainhi}, $a=\tilde a=b=\tilde b=2$; hence, $w''$ is not degenerate. Since $|i\omega-\alpha|^\ell \approx \rho^{2\ell}$, we define
\begin{align}
\label{eq:klowP}
\kappa_\low
    & =n(1/p-1/q)
    +\nu+2\ell,\\
\label{eq:khiP}
\kappa_\high
    & = n(1/p-1/q)
    +\nu-\nu_0+2\ell.
\end{align}
Applying Theorem~\ref{thm:mainlow} without dissipation, the solution to~\eqref{eq:plate} with $\e=0$ verifies the low-frequencies estimate with $\delta_\low$ as in~\eqref{eq:deltalowtheta3}-\eqref{eq:deltaundamped}, that is,
\begin{equation}\label{eq:platelow}
\||D|^\nu \partial_t^\ell \mathscr{F}^{-1}(\chi_0\hat u)(t,\cdot) \|_{L^q} \lesssim (1+t)^{-\min\left\{\frac{\kappa_\low}2,\left(\frac{n-1}\tau+\frac12\right)d\right\}}\,\|(u_0,|D|^{-2}u_1)\|_{L^p},\qquad t\geq0.
\end{equation}
When~$\e>0$, $\theta=2=a$, so Theorem~\ref{thm:mainlow} holds with $\delta_\low=\kappa_\low/a$, that is,
\[ \||D|^\nu \partial_t^\ell \mathscr{F}^{-1}(\chi_0\hat u)(t,\cdot) \|_{L^q} \lesssim (1+t)^{-\frac{\kappa_\low}2}\,\|(u_0,|D|^{-2}u_1)\|_{L^p},\qquad t\geq0. \]
When $\e=0$, applying Theorem~\ref{thm:mainhi} at high frequencies, we get
\begin{equation}\label{eq:Phi}
\||D|^\nu \partial_t^\ell \mathscr{F}^{-1}(\chi_1\hat u)(t,\cdot) \|_{L^q}\lesssim \|(\<D\>^{\nu_0}u_0,\<D\>^{\nu_0-2}u_1)\|_{L^p}\times\begin{cases}
t^{-\frac{\kappa_\high}2}, & t\in(0,1], \\
t^{-\left(\frac{n-1}\tau+\frac12\right)d}, & t\geq1,
\end{cases}
\end{equation}
provided that~\eqref{eq:reg} holds, that is,
\begin{equation}\label{eq:regP}
\frac{\kappa_\high}2 \leq \left(\frac{n-1}\tau + \frac12\right) d.
\end{equation}
In particular, the solution to~\eqref{eq:plate} verifies the long time estimate
\begin{equation}\label{eq:platedecay}
\||D|^\nu \partial_t^\ell u(t,\cdot) \|_{L^q} \lesssim t^{-\min\left\{\frac{\kappa_\low}2,\left(\frac{n-1}\tau+\frac12\right)d\right\}}\,\|(\<D\>^{\nu_0} u_0,\<D\>^{\nu_0}|D|^{-2}u_1)\|_{L^p}, \quad t\geq1.
\end{equation}
Letting $\nu_0=n\left(\frac1p-\frac1q\right)+\nu+2\ell$ (or $\nu_0>\ldots$ if $p=1$ or $q=\infty$), we may remove the singularity at~$t=0$, so that we may replace $t$ by $1+t$ and $t\geq1$ by $t\geq0$ in the right-hand side of~\eqref{eq:platedecay}.

When $\e>0$, the dissipation produces exponential decay and smoothing; applying Theorem~\ref{thm:mainhiXi} with $\theta=b=2$, we find:
\begin{equation}\label{eq:platedecaydamp}
\||D|^\nu \partial_t^\ell u(t,\cdot) \|_{L^q} \lesssim \|(\<D\>^{\nu_0} u_0,\<D\>^{\nu_0}|D|^{-2}u_1)\|_{L^p} \times\begin{cases}
t^{-\frac{\kappa_\high}2}, & t\in(0,1],\\
t^{-\frac{\kappa_\low}2}, & t\geq1,
\end{cases}
\end{equation}
with no need to assume~\eqref{eq:regP}.
\begin{example}
Fix~$\e=0$ and let $A$ be as in~\eqref{eq:Aell} with $\ell=2$ in~\eqref{eq:plate}, that is, we consider
\begin{equation*}\begin{cases}
u_{tt}+ \sum_{|\gamma|=4}c_\gamma \partial_x^\gamma u =0, & x\in\R^n, \ t>0,\\
(u,u_t)(0,x)=(u_0,u_1)(x), & x\in\R^n.
\end{cases}\end{equation*}
We fix $\ell=0$, $\nu=-2$ and $q=p'$, so that $d=\frac2p-1$ and $\kappa_\low=nd-2$. As in Assumption~\ref{ass:kappa}, we set $nd\geq2$ if $p>1$ or $nd>2$ if $p=1$; in the case $u_0=0$, we may omit this assumption. Then, replacing $(u_0,|D|^{-2}u_1)$ by $(|D|^2u_0,u_1)$, \eqref{eq:platedecay} gives
\[ \|u(t,\cdot) \|_{L^{p'}} \lesssim t^{-\min\left\{\frac{nd}2-1,\left(\frac{n-1}\tau+\frac12\right)d\right\}}\,\|(\<D\>^{\nu_0} |D|^2u_0,\<D\>^{\nu_0}u_1)\|_{L^p}, \quad t\geq1. \]
In particular, the decay is $t^{1-n\left(\frac1p-\frac12\right)}$ if $\tau=2$. When $\tau=4$ as in Examples~\ref{ex:CMY} and~\ref{ex:DFZ}, the decay is $t^{-\frac{n+1}2\left(\frac1p-\frac12\right)}$ when $(n-1)(1/p-1/2)\geq2$.
\end{example}

\subsection{The plate equation with mass}\label{sec:platemass}

We consider the initial value problem for the damped plate equation with mass~\eqref{eq:DPlmass}:
\begin{equation}\label{eq:platemass}\tag{PM}\begin{cases}
u_{tt}+A^2 u + u + 2\e A^\gamma u_t =0, & x\in\R^n, \ t>0,\\
(u,u_t)(0,x)=(u_0,u_1)(x), & x\in\R^n,
\end{cases}\end{equation}
where $\gamma=0$ or $\gamma=1$. With the notation in~\eqref{eq:abgeneral}, $\alpha=\e\rho^{2\gamma}$, $\beta=1+\rho^4$, so that
\[ \omega=\sqrt{1+\rho^4-\e^2\rho^{4\gamma}}.\]
By straightforward computation, when $\gamma=0$ we get
\begin{equation}\label{eq:derDPLtheta0}
\omega'(\rho)
    = \frac{2\rho^3}{(1-\e^2+\rho^4)^{\frac12}}>0,\qquad
\omega''(\rho)
    = \frac{6(1-\e^2)\rho^2+2\rho^6}{(1-\e^2+\rho^4)^{\frac32}}>0,
\end{equation}
while, when $\gamma=1$, we obtain
\begin{equation}\label{eq:derDPLtheta1}
\omega'(\rho)
    = (1-\e^2)\,\frac{2\rho^3}{(1+(1-\e^2)\rho^4)^{\frac12}}>0,\qquad
\omega''(\rho)
    = (1-\e^2)\,\frac{6\rho^2+2(1-\e^2)\rho^6}{(1+(1-\e^2)\rho^4)^{\frac32}}>0,
\end{equation}
provided that $\e\in[0,1)$. With the notation in Theorems~\ref{thm:mainlow} and~\ref{thm:mainhi}, %
we get $a=\tilde a=4$ and $b=\tilde b=2$. In particular, $w''$ is not degenerate. Since $|i\omega-\alpha|^\ell \approx \<\rho\>^{2\ell}$, we define
\begin{align}\label{eq:klowPM}
\kappa_\low
    & = n(1/p-1/q)
    +\nu,\\
\label{eq:khiPM}
\kappa_\high
    & = n(1/p-1/q)
    +\nu-\nu_0+2\ell.
\end{align}
We notice that $\kappa_\high$ is as in~\eqref{eq:khiP}, while $\kappa_\low$ is independent on $\ell$, due to the presence of the mass term.

First let $\gamma=0$ and $\e\in[0,1)$, so that the influence of the dissipation amounts to a multiplication by $e^{-\e t}$. Applying Theorem~\ref{thm:mainlow}, the solution to~\eqref{eq:platemass} with $\e=0$ verifies the low-frequencies estimate with $\delta_\low$ as in~\eqref{eq:deltalowtheta3}-\eqref{eq:deltaundamped}, that is,
\[ \||D|^\nu \partial_t^\ell \mathscr{F}^{-1}(\chi_0\hat u)(t,\cdot) \|_{L^q} \lesssim e^{-\e t}\,(1+t)^{-\min\left\{\frac{\kappa_\low}4,\left(\frac{n-1}\tau+\frac12\right)d\right\}}\,\|(u_0,u_1)\|_{L^p},\qquad t\geq0. \]
The high frequencies estimate is as per the plate equation in~\textsection\ref{sec:plate}, multiplied by $e^{-\e t}$, therefore we get
\begin{equation}\label{eq:PMhi}
\||D|^\nu \partial_t^\ell u(t,\cdot) \|_{L^q} \lesssim\|(\<D\>^{\nu_0}u_0,\<D\>^{\nu_0-2}u_1)\|_{L^p}\times\begin{cases}
t^{-\frac{\kappa_\high}2}, & t\in(0,1], \\
e^{-\e t}\,t^{-\min\left\{\frac{\kappa_\low}4,\left(\frac{n-1}\tau+\frac12\right)d\right\}}, & t\geq1,
\end{cases}
\end{equation}
provided that~\eqref{eq:regP} holds.

Now, let $\gamma=1$. Then we may apply Theorem~\ref{thm:mainlow} with $\theta=2<a$ when $\gamma=1$:
\[ \||D|^\nu \partial_t^\ell \mathscr{F}^{-1}(\chi_0\hat u)(t,\cdot) \|_{L^q} \lesssim (1+t)^{-\frac{\kappa_\low}2}\,\|(u_0,u_1)\|_{L^p},\qquad t\geq0. \]
Moreover, we have exponential decay and smoothing at high frequencies. By applying Theorem~\ref{thm:mainhiXi} with $\theta=2=b$, we may remove the assumption~\eqref{eq:regP} and get \eqref{eq:platedecaydamp} (the only difference being that $+2\ell$ does not appear in the definition of $\kappa_\low$ when there is the mass).

In both cases $\gamma=0,1$, the singularity may be removed setting $\nu_0=n\left(\frac1p-\frac1q\right)+\nu$ or $\nu_0>\ldots$ if $p=1$ or $q=\infty$.
%
%

Our result consistently extends the estimates recently obtained in \cite{AAEL} for $A=-\Delta$ (so that $\tau=2$) and $u_0=0$, $\nu=\ell=0$, $\e=0$. Our result when $\e=0$ and $A=-\Delta$ also improves the results obtained in~\cite{Guo2008} and in~\cite{Levandosky1998}.
%

\subsection{Boussinesq Equation}\label{sec:BQ}

We consider the initial value problem for the Viscous Boussinesq equation~\eqref{eq:VBQ}:
\begin{equation}\label{eq:BQ}\tag{BQ}
\begin{cases}
u_{tt} +A u + A^2 u + 2\e Au_t =0 & t>0,\ x\in\R^n,\\
(u,u_t)(0,x)=(u_0,u_1)(x).
\end{cases}
\end{equation}
With the notation in~\eqref{eq:abgeneral}, $\alpha=\e \rho^2$ and $\beta=\rho^2+\rho^4$, so that
\[ \omega = \rho\,\sqrt{1+\rho^2(1-\e^2)},\]
and, by straightforward computation, we get
\begin{equation}\label{eq:derVBQ}
\omega'
    = \frac{1+2\rho^2(1-\e^2)}{\sqrt{1+\rho^2(1-\e^2)}}>0,\quad
\omega''
    = \rho\,\frac{2\rho^2(1-\e^2)^2+3(1-\e^2)}{(\rho^2(1-\e^2)+1)^\frac32}>0,
\end{equation}
provided that $\e\in[0,1)$, for any $\rho>0$. The case $\e=0$ corresponds to the inviscid Boussinesq model. With the notation in Theorems~\ref{thm:mainlow} and~\ref{thm:mainhi}, $a=1$, $\tilde a=3$, $b=\tilde b=2$. In particular, $\omega''$ is degenerate at low frequencies. Let $\ell\in\N$ and $\nu,\nu_0\in\R$. Since $|i\omega+\alpha|^\ell \approx (\rho\<\rho\>)^{\ell}$, we define
\begin{align}\label{eq:klowBQ}
\kappa_\low
    & = n(1/p-1/q)
    +\nu+\ell,\\
\label{eq:khiBQ}
\kappa_\high
    & = n(1/p-1/q)
    +\nu-\nu_0+2\ell.
\end{align}
We stress that $\kappa_\high$ is as in~\eqref{eq:khiP}.
%
%
%
%
When $\e=0$, applying Theorem~\ref{thm:mainlow} with no dissipation, the solution to~\eqref{eq:BQ} verifies the low-frequencies estimate
\begin{equation}\label{eq:BQlow}
\||D|^\nu \partial_t^\ell \mathscr{F}^{-1}(\chi_0\hat u)(t,\cdot) \|_{L^q} \lesssim (1+t)^{-\delta_\BQ}\,\|(u_0,|D|^{-1}u_1)\|_{L^p},\qquad t\geq0,
\end{equation}
where, according to~\eqref{eq:deltalowtheta3}-\eqref{eq:deltaundamped},
\begin{equation}\label{eq:deltaBQ}\begin{split}
\delta_\BQ
    & = \min\left\{\kappa_\low, \frac{\kappa_\low}3+\frac{2(n-1)}{3\tau}d, \left(\frac{n-1}\tau+\frac12\right)d\right\}\\
    & = \begin{cases}
\kappa_\low, & \kappa_\low\leq\frac{n-1}\tau d,\\
\frac{\kappa_\low}3+\frac{2(n-1)}{3\tau}d , & 0\leq \kappa_\low-\frac{n-1}\tau d\leq \frac32 d,\\
\left(\frac{n-1}\tau+\frac12\right)d , & \kappa_\low\geq \left(\frac{n-1}\tau +\frac32\right) d.
\end{cases}
\end{split}
\end{equation}
In the case $\e>0$, the decay rate improves in some range for $\kappa_\low$. Setting $\theta=2\in(a,\tilde a)$, using~\eqref{eq:deltalowtheta2} we find %
%
%
%
%
%
%
%
that~\eqref{eq:BQlow} holds with $\delta_\BQ$ in~\eqref{eq:deltaBQ} replaced by
\begin{equation}\label{eq:deltaVBQ}
\delta_\VBQ = \begin{cases}
\delta_\BQ=\kappa_\low , & \kappa_\low\leq\frac{n-1}\tau d,\\
\frac12\left(\kappa_\low+\frac{n-1}{\tau}d\right) , & \kappa_\low\geq\frac{n-1}\tau d.
\end{cases}
\end{equation}
%
%
Even in the simple case $A=-\Delta$, the decay rate described by~\eqref{eq:deltaVBQ} improves the decay rate obtained in~\cite[Lemma 3.1]{LW} and in \cite[Theorem 2.1]{Dao-Chen=2022}.

We now consider high frequencies. Applying Theorem~\ref{thm:mainhi}, the solution to~\eqref{eq:BQ} with $\e=0$ also verifies the high-frequencies estimate~\eqref{eq:Phi}, provided that~\eqref{eq:regP} holds (plate behavior). %
%
In particular, the solution to~\eqref{eq:BQ} with $\e=0$ verifies the estimate
\begin{equation}\label{eq:decayBQ}
\||D|^\nu \partial_t^\ell u(t,\cdot) \|_{L^q} \lesssim \|(\<D\>^{\nu_0} u_0,|D|^{-1}\<D\>^{\nu_0-1}u_1)\|_{L^p}\times \begin{cases}
t^{-\frac{\kappa_\high}2} , & t\in(0,1],\\
t^{-\delta_\BQ}, & t\geq1.
\end{cases}
\end{equation}
Now let $\e\in(0,1)$. In this case, the dissipative term $e^{-\e t\rho^2(\xi)}$ produces exponential decay and smoothing at high frequencies, as in Theorem~\ref{thm:mainhiXi} with $\theta=2=b$; in particular, without the need to assume~\eqref{eq:regP}, we obtain the estimate
%
\begin{equation}\label{eq:decayVBQ}
\||D|^\nu \partial_t^\ell u(t,\cdot) \|_{L^q} \lesssim \|(\<D\>^{\nu_0}u_0,\<D\>^{\nu_0-2}u_1)\|_{L^p}\times\begin{cases}
C\,t^{-\frac{\kappa_\high}2}  , & t\in(0,1], \\
C\,t^{-\delta_\VBQ}\, , & t\geq1.
\end{cases}
\end{equation}
For both the inviscid and the viscous model, the singularity at $t=0$ is removed setting $\nu_0=n\left(\frac1p-\frac1q\right)+\nu+2\ell$, or $\nu_0>\ldots$ if $p=1$ or $q=\infty$.
\begin{remark}\label{rem:dispersiveBQ}
When $\e=0$, we are particularly interested in the case whether
\[ \delta_\BQ=\left(\frac{n-1}\tau+\frac12\right)d. \]
This case corresponds to the scenario where the decay estimate only depends on $d$ and not on $\kappa_\low$. In particular, since $\kappa_\low\geq nd+\nu+\ell$, then $\delta_\BQ$ is as above when
\begin{equation}\label{eq:puredispersiveBQ}
\nu+\ell + \left(n-\frac{n-1}\tau-\frac32\right)d\geq0.
\end{equation}
The above condition reduces to $n\geq2$ %
%
%
when $\nu+\ell=0$.
\end{remark}
%
%
In the following remarks, we fix $\tau=2$ and $\e=0$ to show the consistency of our estimates with results known in literature in the case $A=-\Delta$.
\begin{remark}\label{rem:BQ1inf}
Let $\nu=\ell=0$. Then, according to Remark~\ref{rem:dispersiveBQ}, $\delta_\BQ=(n/2)d$ for any $n\geq2$. In particular, when $p=1$, $q=\infty$, so that $d=1$, we get
\begin{equation}\label{eq:L1infBQ} \delta_\BQ=\begin{cases}
\frac13 , & n=1,\\
\frac{n}2 , & n\geq2.
\end{cases}
\end{equation}
The $L^1-L^\infty$ estimate (for $\nu_0=0$)
\[ \|u(t,\cdot) \|_{L^\infty} \lesssim t^{-\delta_\BQ} \,\|(u_0,|D|^{-1}\<D\>^{-1}u_1)\|_{L^1},\qquad t\geq1, \]
with $\delta_\BQ$ as in~\eqref{eq:L1infBQ}, has been derived for $n\geq2$ in \cite{Liu2019JDE} (see Lemma 3.1) and for $n=1$ in~\cite{YLiu1993}.

In the case of the 1-dimensional Boussinesq equation, Y. Cho and T. Ozawa~\cite{Cho-Ozawa=2007} used the ``vanishing condition'' to improve the decay rate to $t^{-\frac12}$. This corresponds, in our Remark~\ref{rem:dispersiveBQ}, to fix $\nu=1/2$, so that~\eqref{eq:puredispersiveBQ} holds for any $d$, and to replace $(u_0,|D|^{-1}u_1)$ by $(|D|^{-\frac12}u_0,|D|^{-\frac32}u_1)$; in the case $p=1$, $q=\infty$ and $\ell=0$ this gives
\[ \|u(t,\cdot) \|_{L^\infty} \lesssim t^{-\frac12} \,\|(|D|^{-\frac12}\,\<D\>^{\frac12}\,u_0,|D|^{-\frac32}\<D\>^{-\frac12}u_1)\|_{L^1},\qquad t\geq1. \]
%
\end{remark}
\begin{remark}\label{rem:BQwave}
Let $u_0=0$. Replacing $u_1$ by $|D|u_1$, \eqref{eq:BQlow} becomes
\begin{equation}\label{eq:BQlowu1}
\||D|^{\nu+1} \partial_t^\ell \mathscr{F}^{-1}(\chi_0\hat u)(t,\cdot) \|_{L^q} \lesssim (1+t)^{-\delta_\BQ}\,\|u_1\|_{L^p}, \qquad t\geq0.
\end{equation}
%
In particular, let $\nu=-1$ and $\ell=0$, so that $\kappa_\low=n(1/p-1/q)-1$ (Assumption~\ref{ass:kappa} holds due to $u_0=0$) and, according to~\eqref{eq:deltalowtheta3}-\eqref{eq:deltaundamped},
%
%
%
\[ \delta_\BQ=n\left(\frac1p-\frac1q\right)-1 \iff n\left(\frac1p-\frac1q\right)- (n-1)\frac{d}2 \leq 1. \]
The $(p,q)$ region determined by the above inequality is the same in which the solution operator for the classic wave equation ($A=-\Delta$ in~\eqref{eq:W}) is bounded from $L^p$ to $L^q$ (see~\cite{P,Strichartz1970}). In that region, the Boussinesq equation satisfies, for $t\geq1$, the same estimate of the classic wave equation, i.e.,
\[ \|u(t,\cdot) \|_{L^q} \lesssim t^{1-n\left(\frac1p-\frac1q\right)} \,\|u_1\|_{L^p},\qquad t>0. \]
At short times $t\in(0,1]$, however, the estimate for the Boussinesq equation ($\nu=-1$ and $\nu_0=1$ in~\eqref{eq:decayBQ}) gives $t^{1-\frac{n}2\left(\frac1p-\frac1q\right)}$.
\end{remark}
\begin{remark}
In~\cite{Cho-Ozawa=2007} the authors explained that the same estimate for the Boussinesq equation in space dimension $n=1$ can be obtained by using the third-order stationary phase method. This corresponds to set $m=3$ in Theorem~\ref{thm:mainlow}. However, applying Theorem~\ref{thm:mainlow} with $m=3$ in place of $m=2$ does not bring any benefit for the Boussinesq equation (and, more in general, for any model with a phase function $w$ that is also smooth at $\rho=0$), see Remark~\ref{rem:m2}. Roughly speaking, we can use a second-order stationary phase method, and we do not need higher order methods, since Lemma~\ref{lem:low} already takes into account of an interpolation with the case that formally corresponds to infinite order method. 
\end{remark}

\subsubsection{A variant of the viscous Boussinesq equation}\label{sec:BQvar}

We consider the initial value problem for the following variant of the Viscous Boussinesq equation~\eqref{eq:VBQ}, considered in~\cite{CT25,C02,CMP07,CV94}:
\begin{equation}\label{eq:VVBQ}\tag{VVBQ}
\begin{cases}
u_{tt} +A u + A^2 u + 2\e A^2 u_t =0 , & t>0,\ x\in\R^n,\\
(u,u_t)(0,x)=(u_0,u_1)(x),
\end{cases}
\end{equation}
for $\e>0$. With the notation in~\eqref{eq:abgeneral}, $\alpha=\e \rho^4$ and $\beta=\rho^2+\rho^4$, so that
\[ \omega = \rho\,\sqrt{1+\rho^2-\e^2\rho^6},\]
and, %
%
by straightforward computation,
\begin{equation}\label{eq:derVVBQ}
\omega'
    = \frac{1+2\rho^2-4\e^2\rho^6}{\sqrt{1+\rho^2-\e^2\rho^6}}>0,\qquad
\omega''
    = \rho\,\frac{3+2\rho^2-3\e^2\rho^4(7+6\rho^2)+12\e^4\rho^{10}}{(1+\rho^2-\e^2\rho^6)^\frac32}>0,
\end{equation}
provided that $\rho$ is sufficiently small with respect to $\e^{-1}$. With the notation in Theorem~\ref{thm:mainlow}, $a=1$ and $\tilde a=3$, as per the Boussinesq equation, and $\theta=4>\tilde a$; hence, we define $\kappa_\low$ as in~\eqref{eq:klowBQ} and we obtain %
%
%
%
%
%
%
%
that~\eqref{eq:BQlow} holds with $\delta_\BQ$ in~\eqref{eq:deltaBQ} replaced by $\delta_\low$ as in~\eqref{eq:deltalowtheta3} with $\theta=4$, that is,
\begin{equation}\label{eq:deltaVVBQ}
\delta_\VVBQ = \begin{cases}
\delta_\BQ=\kappa_\low , & \kappa_\low\leq\frac{n-1}\tau d,\\
\frac{\kappa}3+\frac23\,\frac{n-1}\tau d , & 0\leq\kappa_\low-\frac{n-1}\tau d\leq\frac32\, d,\\
\frac\kappa4 + \frac34\frac{n-1}\tau d + \frac18\,d , & \kappa_\low \geq \left(\frac{n-1}\tau +\frac32\right) d.
\end{cases}
\end{equation}


\subsection{The viscoelastic wave equation}\label{sec:visco}

We consider the initial value problem for the wave equation with viscoelastic damping~\eqref{eq:Wvisco}:
\begin{equation}\label{eq:VW} \begin{cases}\tag{VW}
u_{tt} + A u + 2\e Au_t =0, & t>0,\ x \in\R^n,\\
(u,u_t)(0,x)=(u_0,u_1)(x).
\end{cases} \end{equation}
With the notation in~\eqref{eq:abgeneral}, $\alpha=\e\rho^2$ and $\beta=\rho^2$, so that, by straightforward computation,
%
\begin{equation}\label{eq:derVW}
\omega
    =\sqrt{\rho^2-\e^2\rho^4},\quad
\omega'
    = \frac{\rho-2\e^2\rho^3}\omega>0,\quad
\omega''
    = \frac{-3\e^2\rho^4+2\e^4\rho^6}{\omega^3}<0,
\end{equation}
provided that $2\e^2\rho^2<1$. With the notation in Theorem~\ref{thm:mainlow}, $a=1$, $\tilde a=3$, $\theta=2\in(a,\tilde a)$. In particular, $\omega''$ has the same degeneracy at low frequencies of the Boussinesq equation~\eqref{eq:BQ}. Let $\ell\in\N$ and $\nu,\nu_0\in\R$. Since $|i\omega+\alpha|^\ell \approx \rho^{\ell}$ as $\rho\to0$, we define $\kappa_\low=n(1/p-1/q)+\nu+\ell$ as in~\eqref{eq:klowBQ}. Applying Theorem~\ref{thm:mainlow}, the solution to~\eqref{eq:VW} verifies the low-frequencies estimate~\eqref{eq:BQlow} with $\delta_\BQ$ replaced by~$\delta_\VBQ$ as in~\eqref{eq:deltaVBQ}.
\begin{remark}
Let $q=p'$ and $\ell=0$, so that $\kappa_\low=nd+\nu$. When $\kappa_\low \leq \frac{n-1}\tau d$, the estimate we obtain for the viscoelastic wave, namely,
\[ \||D|^\nu\mathscr{F}^{-1}(\chi_0\hat u)(t,\cdot) \|_{L^q} \lesssim (1+t)^{-\min\left\{\kappa_\low,\frac12\left(\kappa_\low+\frac{n-1}{\tau}d\right)\right\}}\,\|(u_0,|D|^{-1}u_1)\|_{L^p},\quad t\geq0,\]
is the same estimate obtained for the free wave (see~\cite[Theorem 3.1]{CMY}).
\end{remark}


\subsubsection{The structurally damped wave equation}\label{sec:structural}

As a variant of the wave equation with viscoelastic dissipation, we consider the wave equation with structural damping~\eqref{eq:Wvisco}:
\begin{equation}\label{eq:SW} \begin{cases}\tag{SW}
u_{tt} + A u + 2\e A^{\frac12} u_t =0, & t>0,\ x \in\R^n,\\
(u,u_t)(0,x)=(u_0,u_1)(x).
\end{cases} \end{equation}
With the notation in~\eqref{eq:abgeneral}, $\alpha=\e\rho$ and $\beta=\rho^2$, so that $\omega=\sqrt{1-\e^2}\,\rho$, provided that $\e\in(0,1)$. The degeneracy for this model is the same of the wave equation, that is, with the notation in Theorem~\ref{thm:mainlow}, $a=b=1=\theta$ and, formally, $\tilde a=m=\tilde b=\infty$. Therefore, $\delta_\low=\kappa_\low$. At high frequencies, the dissipation produces exponential decay and smoothing. Therefore, for $\nu_0=0$ we get
\[ \||D|^\nu \partial_t^\ell u(t,\cdot) \|_{L^q} \leq C\,t^{-n\left(\frac1p-\frac1q\right)-\nu-\ell}\,\|(u_0,|D|^{-1}u_1)\|_{L^p}, \qquad t>0,\]
consistently with the result obtained in~\cite{DAE14JDE,DAE17,DAR14,PKR15}.

\subsection{The Klein-Gordon equation}\label{sec:KG}

We consider the initial value problem for the damped Klein-Gordon equation~\eqref{eq:DKG}:
\begin{equation}\label{eq:KG}\tag{KG}
\begin{cases}
u_{tt} +A u + u + 2\e u_t =0 , & t>0,\ x\in\R^n,\\
(u,u_t)(0,x)=(u_0,u_1)(x).
\end{cases}
\end{equation}
With the notation in~\eqref{eq:abgeneral}, $\alpha=\e$ and $\beta=\rho^2+1$, so that, by straightforward computation,
\begin{equation}\label{eq:derDKG}
\omega
    = \sqrt{\rho^2+1-\e^2},\quad
\omega'
    = \frac{\rho}{\sqrt{\rho^2+1-\e^2}}>0,\quad
\omega''
    = (1-\e^2)\,\frac{\sqrt{\rho^2+1-\e^2}}{(\<\rho\>^2-\e^2)^2}>0,
\end{equation}
provided that $\e^2<1$, for any $\rho>0$. The case $\e=0$ corresponds to the Klein-Gordon model, while the case $\e>0$ is sometimes called telegraph equation. With the notation in Theorems~\ref{thm:mainlow} and~\ref{thm:mainhi}, $a=\tilde a=2$, $b=1$, and $\tilde b=-1$. In particular, $\omega''$ is degenerate at high frequencies. Let $\ell\in\N$ and $\nu,\nu_0\in\R$. Since $|i\omega+\alpha|^\ell \approx \<\rho\>^\ell$, we define
\begin{align}\label{eq:klowKG}
\kappa_\low
    & = n(1/p-1/q)
    +\nu,\\
\label{eq:khiKG}
\kappa_\high
    & = n(1/p-1/q)
    +\nu-\nu_0+\ell.
\end{align}
We notice that $\kappa_\low$ is as in~\eqref{eq:klowPM}. The influence from the dissipation means that the estimates for the solution are multiplied by $e^{-\e t}$. Applying Theorem~\ref{thm:mainlow}, the solution to~\eqref{eq:KG} verifies the low-frequencies estimate with $\delta_\low$ given by~\eqref{eq:deltalowtheta3}-\eqref{eq:deltaundamped}, that is,
\begin{equation}\label{eq:KGlow}
\||D|^\nu \partial_t^\ell \mathscr{F}^{-1}(\chi_0\hat u)(t,\cdot) \|_{L^q} \lesssim e^{-\e t}\,(1+t)^{-\min \left\{\frac{\kappa_\low}2,\left(\frac{n-1}\tau +\frac12\right) d\right\}}\,\|(u_0,u_1)\|_{L^p},\qquad t\geq0.
\end{equation}
We consider high frequencies. Condition~\eqref{eq:shrink} reads as
\begin{equation}\label{eq:shrinkKG}
\kappa_\high\leq \frac{n-1}\tau d.
\end{equation}
Applying Theorem~\ref{thm:mainhi}, the solution to~\eqref{eq:KG} verifies the high frequencies estimate
\begin{equation}\label{eq:KGhi}\begin{split}
& \||D|^\nu \partial_t^\ell \mathscr{F}^{-1}(\chi_1\hat u)(t,\cdot) \|_{L^q} \\
& \lesssim \|(\<D\>^{\nu_0} u_0,\<D\>^{\nu_0-1}u_1)\|_{L^p}\times\begin{cases}
t^{-\kappa_\high} , & t\in(0,1], \\
e^{-\e t}\,t^{-\min\left\{\left(\frac{n-1}\tau + \frac12\right)d, 2\frac{n-1}\tau d-\kappa_\high\right\}}, & t\geq1.
\end{cases}
\end{split}
\end{equation}
The singularity at~$t=0$ may be removed setting $\nu_0=n\left(\frac1p-\frac1q\right)+\nu+\ell$ or $\nu_0>\ldots$ if $p=1$ or $q=\infty$.

In particular, the solution to~\eqref{eq:KG} verifies the long time estimate
\begin{equation}\label{eq:KGest}\begin{split}
& \||D|^\nu \partial_t^\ell u(t,\cdot) \|_{L^q} \lesssim e^{-\e t}\,t^{-\delta_\KG}\, \|(\<D\>^{\nu_0} u_0,\<D\>^{\nu_0-1}u_1)\|_{L^p}, \quad t\geq1,\\
& \delta_\KG = \min\left\{\kappa_\low,\left(\frac{n-1}\tau + \frac12\right)d, 2\frac{n-1}\tau d-\kappa_\high \right\}.
\end{split}
\end{equation}
\begin{remark}
Let $q=p'$ and $\ell=0$, so that $\kappa_\low=nd+\nu$ and $\kappa_\high=nd+\nu-\nu_0$. Assume that $\frac{n-1}\tau\geq\frac12$, and that (3.21) in~\cite{CMY} holds, namely,
\[ nd-\frac{n-1}\tau d +\frac12 d \leq \nu_0-\nu, \]
that is, $\kappa_\high\leq \frac{n-1}\tau d -\frac12 d$. Then, for $\nu\geq0$,
\[ \delta_\KG = \left(\frac{n-1}\tau + \frac12\right)d, \]
as in~\cite[Theorem 3.2]{CMY} for $\epsilon=0$. As per the result obtained in~\cite{CMY}, our estimates are consistent with the estimates obtained in \cite{MarshallStrauss1980} for $A=-\Delta$ and $u_0=0$ (we include the endpoint cases $p=1$ and $q=\infty$, that are excluded in~\cite{CMY}).
\end{remark}

\subsubsection{The Klein-Gordon equation with structural dissipation}

As a variant of the dissipative Klein-Gordon equation, we consider the Klein-Gordon equation with structural dissipation
\begin{equation}\label{eq:SKG}\tag{SKG}
\begin{cases}
u_{tt} +A u + u + 2\e A^{\frac12} u_t =0 , & t>0,\ x\in\R^n,\\
(u,u_t)(0,x)=(u_0,u_1)(x).
\end{cases}
\end{equation}
With the notation in~\eqref{eq:abgeneral}, $\alpha=\e\rho$ and $\beta=\rho^2+1$, so that, by straightforward computation,
\begin{equation}\label{eq:derSKG}
\begin{split}
\omega
    & = \sqrt{\rho^2(1-\e^2)+1},\quad \omega'
    = \frac{\rho(1-\e^2)}{\sqrt{\rho^2(1-\e^2)+1}}>0,\\
\omega''
    & = (1-\e^2)\,\frac{\sqrt{\rho^2(1-\e^2)+1}}{\<\rho\>^4-2\e^2(\rho^2+\rho^4)+\e^4\rho^4}>0,
\end{split}
\end{equation}
provided that $\e^2<1$, for any $\rho>0$. With the notation in Theorems~\ref{thm:mainlow} and~\ref{thm:mainhi}, $a=\tilde a=2$, $b=1$, $\tilde b=-1$, $\theta=1$. At low frequencies, we apply Theorem~\ref{thm:mainlow} with $\theta=1<a$. 
%
%
At high frequencies, we get exponential decay and smoothing; since $\theta=1=b$, we obtain
\begin{equation}\label{eq:SKGest}
\||D|^\nu \partial_t^\ell u(t,\cdot) \|_{L^q} \lesssim \|\<D\>^{\nu_0}(u_0,u_1)\|_{L^p}\times \begin{cases}
t^{-\kappa_\high} , & t\in(0,1],\\
t^{-\kappa_\low} , & t\geq1,
\end{cases}
\end{equation}
with $\kappa_\low$ as in~\eqref{eq:klowKG} and $\kappa_\high$ as in~\eqref{eq:khiKG}, with no need to assume~\eqref{eq:shrinkKG}.

\section{Application to the nonlinear Boussinesq equation}\label{sec:nonlinear}

In this section, we consider the Cauchy problem for semilinear Boussinesq equation
\begin{equation}\label{eq:NBQ}\tag{NBQ}
\begin{cases}
u_{tt}+A u + A^2 u+2\epsilon A  u_t =A f(u) , & t>0,\ x\in\R^n,\\
(u,u_t)(0,x)=(u_0,u_1)(x),
\end{cases}
\end{equation}
in the viscous case $\e>0$ and in the inviscid case $\e=0$. Here
\[ f(0)=0, \quad |f(u)-f(v)|\leq C\,|u-v|\big(|u|^{\alpha-1}+|v|^{\alpha-1}\big),\]
for some $\alpha>1$.
 A notable example of $f$ verifying the above properties is $f(u)=|u|^\alpha$.

The inviscid Boussinesq equation attracted much attention by many researchers due to the wide applications in the real world (see, for instance, \cite{Bona-Sachs=1988, Cho-Ozawa=2007, Linares1993, {YLiu1993},  YLiu1997}). The classic form
\[ u_{tt}-u_{xx}+u_{xxxx} = (u^2)_{xx}, \]
was first derived by Boussinesq in 1872 in \cite{Boussinesq1872} to describe the propagation of long waves with small amplitude on the surface of shallow water, where $u=u(x,t)$ denotes the elevation of the free fluid surface.

We define the operator
\[ (Fu)(t,\cdot) =\int_0^t A\, E(t-s)\ast f(u(s,\cdot))\,ds,\]
where $E(t)$ is the fundamental solution to the linear problem~\eqref{eq:BQ}, i.e.,
\[ u(t,\cdot)= E(t,\cdot)\ast u_1 + (E_t+2\e E)(t,\cdot)\ast u_0. \]
Since $A$ is a second order homogeneous operator, if we fix $\nu=1$ and $\ell=0$ and we apply~\eqref{eq:BQlow} to $|D|^{-1}u$, we get
\[ \|\mathscr{F}^{-1}(\chi_0\hat u)(t, \cdot) \|_{L^q} \leq C\,(1+t)^{-\delta_\BQ}\,\|(|D|^{-1}u_0,|D|^{-2}u_1)\|_{L^p}, \]
with $\delta_\BQ$ replaced by $\delta_\VBQ$ if $\e>0$. We plan to apply the estimate above the term coming from the nonlinear perturbation. Namely, we fix $p$ and $q$ such that $p\alpha=q$, so that
\[ \||D|^{-2}Af(u)\|_{L^p} \approx \|f(u)\|_{L^p} \approx \|u\|_{L^q}^\alpha. \]
On the other hand, we shall fix an initial data space $\mathcal A$ such that the solution of the corresponding linear problem verify the estimate
\[ \|\mathscr{F}^{-1}(\chi_0\hat u)(t, \cdot) \|_{L^q}\leq C\,(1+t)^{-\delta_\BQ}\, \|(u_0,u_1)\|_{\mathcal A}. \]
Let $p=q'$, so that $n(1/p-1/q)=nd$; to fix the initial data space so that the decay rates are the desired ones (see~\eqref{eq:deltaBQ} and~\eqref{eq:deltaVBQ}), we shall impose
\[ \frac\nu{d} \geq \frac{n-1}\tau-n + \begin{cases}
\frac32 & \text{when $\e=0$,}\\
0 & \text{when $\e>0$.}
\end{cases}\]
In particular, the condition above is satisfied for $\nu=0$ for any $n\geq2$ if $\e=0$ and for any $n\geq1$ if $\e>0$; if $n=1$ and $\e=0$, the above condition holds for $\nu\geq d/2$, consistently with the ``vanishing condition'' introduced in space dimension $n=1$ in~\cite{Cho-Ozawa=2007}.

We first consider the case $\e=0$. In view of Remark~\ref{rem:dispersiveBQ}, due to $\nu=1$, condition~\eqref{eq:puredispersiveBQ} holds for any $n\geq1$; hence, $\delta_\BQ=\left(\frac{n-1}\tau+\frac12\right)d$. Therefore, to maximize the decay rate, we shall fix $p$ and $q$ on the conjugate line, that is,
\[ p=1+\frac1\alpha, \quad q=1+\alpha; \qquad \text{hence,}\quad d(p,q)=\frac{\alpha-1}{\alpha+1}\,. \]
The critical exponent $\alpha_c$ for the existence of global-in-time solutions is obtained when $\alpha\delta_\BQ=1$, that is, when $\alpha_c$ solves
\begin{equation}\label{eq:tauStrauss}
\left(\frac{n-1}\tau+\frac12\right) \frac{\alpha-1}{\alpha+1}=\frac1\alpha.
\end{equation}
The critical exponent identified by~\eqref{eq:tauStrauss} is the Strauss exponent in space dimension $1+(n-1)(2/\tau)$, namely, $\gamma(1+2\tau^{-1}(n-1))$, see~\cite{Strauss}. In the case $\tau=2$, we find the classical Strauss exponent $\gamma(n)$ and our result corresponds to the result obtained for $A=-\Delta$ in~\cite[Theorem 2]{Cho-Ozawa=2007}.
\begin{theorem}\label{thm:NBQ}
Let $n\geq 2$ and $\alpha>\alpha_c$, where $\alpha_c$ is the positive solution to~\eqref{eq:tauStrauss}. Moreover, assume that $\alpha<1+4/(n-2)$ if $n\geq3$. Then for initial data
\begin{equation}\label{eq:dataNBQ}
(u_0,u_1)\in\mathcal{A}:=H^{2, 1+\frac1\alpha}\times \big(\dot H^{-1, 1+\frac1\alpha}\cap L^{1+\frac1\alpha}\big),
\end{equation}
with sufficiently small norm
\[ \|(u_0,u_1)\|_{\mathcal{A}}=\|u_0\|_{H^{2, 1+\frac1\alpha}}+\||D|^{-1}u_1\|_{L^{1+\frac1\alpha}}+\|u_1\|_{L^{1+\frac1\alpha}}, \]
there is a unique global (in time) solution $u\in L^\infty([0, \infty), L^{\alpha+1}(\mathbb{R}^n))$ to \eqref{eq:NBQ} with $\e=0$. Moreover, the solution satisfies the following decay estimates:
\begin{equation}\label{eq:decayNIBQ}
\|\, u(t,\cdot)\|_{L^{\alpha+1}} \lesssim (1+t)^{-\delta_\BQ}\|(u_0,u_1)\|_{\mathcal{A}},\quad \delta_\BQ=\left(\frac{n-1}\tau+\frac12\right) \frac{\alpha-1}{\alpha+1}.
\end{equation}
In space dimension $n=1$ the result holds replacing $\mathcal A$ in~\eqref{eq:dataNBQ} by
\[ \mathcal{A}:=\big(\dot H^{-\frac12, 1+\frac1\alpha}\cap H^{1, 1+\frac1\alpha}\big)\times \big(\dot H^{-\frac32, 1+\frac1\alpha}\cap H^{-1, 1+\frac1\alpha}\big). \]
\end{theorem}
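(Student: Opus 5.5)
We argue by a contraction in the Banach space
\[ X=\Big\{u\in L^\infty([0,\infty),L^{\alpha+1}):\ \|u\|_X:=\sup_{t\geq0}(1+t)^{\delta_\BQ}\|u(t,\cdot)\|_{L^{\alpha+1}}<\infty\Big\}, \]
with $p=1+1/\alpha$, $q=\alpha+1=p'$, and $d=(\alpha-1)/(\alpha+1)$. The solution map is $\Phi u=u^{\lin}+Fu$, where $u^{\lin}=E(t)\ast u_1+E_t(t)\ast u_0$ (recall $\e=0$). The three ingredients are: (i) $\|u^{\lin}\|_X\lesssim\|(u_0,u_1)\|_{\mathcal A}$; (ii) $\|Fu\|_X\lesssim\|u\|_X^\alpha$; (iii) the Lipschitz bound $\|Fu-Fv\|_X\lesssim\|u-v\|_X(\|u\|_X^{\alpha-1}+\|v\|_X^{\alpha-1})$. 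Given these, for small data $\Phi$ maps a small ball of $X$ into itself and is a contraction there, which gives existence, uniqueness and \eqref{eq:decayNIBQ}.

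For (i) we split into $\chi_0$ and $\chi_1$.

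At low frequencies we apply \eqref{eq:BQlow} with $\nu=\ell=0$. By Remark~\ref{rem:dispersiveBQ}, condition \eqref{eq:puredispersiveBQ} holds for $n\geq2$, so $\delta_\BQ=(\frac{n-1}\tau+\frac12)d$. The norm $\|(u_0,|D|^{-1}u_1)\|_{L^p}$ is controlled by $\|(u_0,u_1)\|_{\mathcal A}$.

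At high frequencies we use \eqref{eq:decayBQ} with $\kappa_\high=nd-\nu_0$, choosing $\nu_0$ such that:
\begin{itemize}
\item \eqref{eq:regP} holds, i.e. $\kappa_\high/2\leq(\frac{n-1}\tau+\frac12)d$;
\item $\kappa_\high\leq0$, which removes the singularity at $t=0$;
\item the data norm $\|(\<D\>^{\nu_0}u_0,\<D\>^{\nu_0-2}u_1)\|_{L^p}$ is bounded by $\|u_0\|_{H^{2,p}}+\|u_1\|_{L^p}$.
\end{itemize}
For this we need $\nu_0\leq 2$ and $\nu_0\geq nd$. The restriction $\alpha<1+4/(n-2)$ gives $nd<2$, so a valid $\nu_0\in[nd,2]$ exists; when $p>1$ we may even take $\kappa_\high<0$, so short times are harmless. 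This is where the upper bound on $\alpha$ enters.

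For (ii) we write $A=\rho^2$, so that $Fu(t)=\int_0^t E(t-s)\ast Af(u(s))\,ds$, and $E(t)\ast Ag$ is the $u_1$-part of the linear solution with datum $Ag$. On low frequencies, \eqref{eq:BQlow} applied to $|D|^{-2}A f\approx f$ (this is a zero-order multiplier, bounded on $L^p$ for $p>1$ by Theorem~\ref{thm:Mik}) gives, as explained above the theorem,
\[ \|\chi_0\text{-part}\|_{L^q}\lesssim (1+t-s)^{-\delta_\BQ}\|f(u(s))\|_{L^p}, \qquad \|f(u)\|_{L^p}\leq C\|u\|_{L^q}^\alpha. \]
On high frequencies, the multiplier $\rho^2/\omega\cdot\sin(t\omega)$ behaves like $\rho^0$; we apply Theorem~\ref{thm:mainhi} with $\mu=0$, so $\kappa=nd>0$, and \eqref{eq:reg} holds since $nd/2\leq(\frac{n-1}\tau+\frac12)d$. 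This yields a bound $(t-s)^{-nd/2}$ for $t-s\leq1$, which is integrable because $nd/2<1$, and $(t-s)^{-\delta_\BQ}$ for $t-s\geq1$. Combining,
\[ \|Fu(t)\|_{L^q}\lesssim\|u\|_X^\alpha\int_0^t (t-s)^{-\min\{nd/2,\,\cdot\}}\,(1+t-s)^{-\delta_\BQ}(1+s)^{-\alpha\delta_\BQ}\,ds. \]

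The main point is to check that this integral is $\lesssim(1+t)^{-\delta_\BQ}$. This uses $\alpha\delta_\BQ>1$, which is equivalent to $\alpha>\alpha_c$ by \eqref{eq:tauStrauss}, since the left side of \eqref{eq:tauStrauss} increases in $\alpha$ while $1/\alpha$ decreases. It also uses $\delta_\BQ<1$, or else a harmless logarithm-free standard splitting. We split the integral at $s=t/2$. For $s\leq t/2$, the kernel factor is $\lesssim(1+t)^{-\delta_\BQ}$ and $\int(1+s)^{-\alpha\delta_\BQ}\,ds<\infty$. For $s\geq t/2$, we have $(1+s)^{-\alpha\delta_\BQ}\lesssim(1+t)^{-\alpha\delta_\BQ}$, and the kernel integrates to $\lesssim 1+(1+t)^{1-\delta_\BQ}$; the product is $\lesssim(1+t)^{-\delta_\BQ}$ because $\alpha\delta_\BQ\geq1$.

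Step (iii) is identical, using $\|f(u)-f(v)\|_{L^p}\lesssim\|u-v\|_{L^q}(\|u\|_{L^q}^{\alpha-1}+\|v\|_{L^q}^{\alpha-1})$ by Hölder's inequality.

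For $n=1$ the argument is the same, with $\nu=1/2$ in the low-frequency estimate so that \eqref{eq:puredispersiveBQ} holds (the vanishing condition, Remark~\ref{rem:BQ1inf}). This is why $\mathcal A$ acquires the components $\dot H^{-1/2,p}$ and $\dot H^{-3/2,p}$. In the Duhamel term, $A f$ supplies the two extra derivatives needed to absorb $|D|^{-1/2}$.
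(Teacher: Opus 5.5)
Your scheme is the paper's. It is a contraction in the weighted space $X$. The linear part comes from \eqref{eq:decayBQ} with $\nu_0\in[nd,2]$; the paper takes $\nu_0=nd$. The Duhamel term is split into a low-frequency bound by $\|f(u)\|_{L^{1+1/\alpha}}$ and a loss-free high-frequency bound, which is $\nu=\nu_0=0$ in \eqref{eq:Phi}. The time integrals are the same, using $\alpha\delta_\BQ>1$ and $\delta_\BQ\le nd/2<1$; so your fallback for $\delta_\BQ\ge1$ is never needed.

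There is, however, a genuine problem in the one inequality you verify explicitly. The condition $\frac{nd}2\le\bigl(\frac{n-1}\tau+\frac12\bigr)d$ is equivalent to $\frac{n-1}2\le\frac{n-1}\tau$, i.e.\ to $\tau=2$ when $n\ge2$. For $\tau>2$ (as in Examples~\ref{ex:CMY} and~\ref{ex:DFZ}), condition \eqref{eq:reg} fails for $\mu=0$. The dyadic bound $N^{nd}(tN^2)^{-(\frac{n-1}\tau+\frac12)d}$ from the proof of Theorem~\ref{thm:mainhi} then grows like $N^{(n-1)(1-\frac2\tau)d}$, for short and long times alike. So you get no estimate of the high-frequency part of $E(t-s)\ast Ag$ in $L^{\alpha+1}$ in terms of $\|g\|_{L^{1+1/\alpha}}$. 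By \eqref{eq:regP}, you would need $\nu_0\ge(n-1)(1-\frac2\tau)d$ derivatives on $g=f(u)-f(v)$. Neither the norm of $X$ nor the Lipschitz-type hypothesis on $f$ controls these. Your linear part is unaffected, since there $\kappa_\high\le0$.

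The paper's proof makes exactly the same move. Setting $\nu=\nu_0=0$ in \eqref{eq:Phi} tacitly requires \eqref{eq:regP} with $\kappa_\high=nd$. So you have reproduced its argument faithfully, gap included: as written, both prove the theorem only for $\tau=2$ (e.g.\ $A$ as in \eqref{eq:operator}). To cover $\tau>2$ you would have two options:
\begin{enumerate}
\item add a Sobolev component of order $(n-1)(1-\frac2\tau)d$ to $X$, and assume enough regularity of $f$ for a fractional chain rule;
\item restrict the statement to $\tau=2$.
\end{enumerate}

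A minor point concerns (ii). At low frequencies, the bound by $\|f(u(s))\|_{L^p}$ comes from the $\nu=1$ version of \eqref{eq:BQlow} applied to $|D|^{-1}u$, as stated before the theorem. The $\nu=0$ version you used for the linear part would produce $\||D|^{-1}Af\|_{L^p}\approx\||D|f\|_{L^p}$ instead. Your final formula is nonetheless the right one.
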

%
%
%
%
%
\begin{proof}
The proof is classical (see~\cite{Strauss} for a similar argument). We define the solution space
\[
X
    = \{u\in L^\infty([0,\infty), L^{\alpha+1}): \ \|u\|_X\leq R\},\quad \|u\|_X
    = \sup_{t>0} (1+t)^{\delta_\BQ}\|u(t)\|_{L^{\alpha+1}},
\]
for $R>0$ that will be fixed later. Let $u^\lin$ be the solution to the linear problem~\eqref{eq:BQ} (with $\e=0$). By applying~\eqref{eq:decayBQ} with $\nu=0$ and $\nu_0=nd$, in view of Remark~\ref{rem:dispersiveBQ}, we get
\[
\|u^\lin(t,\cdot)\|_{L^q} \leq C (1+t)^{-\delta_\BQ} \,\|(\<D\>^{nd}u_0,\<D\>^{nd-1}|D|^{-1}u_1)\|_{L^p},\quad t\geq0.
\]
We notice that $nd<2$ is equivalent to $\alpha<1 +4/(n-2)$ when $n\geq3$, so that from the estimate above we derive
\begin{equation}\label{eq:decayBQ-1}
\|u^\lin(t,\cdot)\|_{L^q} \leq C_1 (1+t)^{-\delta_\BQ} \,\|(u_0,u_1)\|_{\mathcal A}.
\end{equation}
When $n=1$, we apply~\eqref{eq:decayBQ} with $\nu=1/2$ and $\nu_0=3/2$ to $|D|^{-\frac12}u$ to get
\[
\|u^\lin(t,\cdot)\|_{L^q} \leq C (1+t)^{-\delta_\BQ} \,\|(\<D\>^{\frac32}|D|^{-\frac12}u_0,\<D\>^{\frac12}|D|^{-\frac32}u_1)\|_{L^p},\quad t\geq0.
\]
In particular, we fix $R=2C_1\|(u_0,u_1)\|_{\mathcal A}$, so that $\|u^\lin\|_X \leq R/2$. We now prove that the operator
\[ (Nu)(t,\cdot)= u^\lin(t,\cdot) + (Fu)(t,\cdot),\]
is a contraction on~$X$. Let $g(\tau,\cdot)=f(u(\tau,\cdot))-f(v(\tau,\cdot))$. We modify the estimate~\eqref{eq:decayBQ-1} for $t\in(0,1]$, to reduce the regularity of $Ag$ (we set $\nu=\nu_0=0$ in~\eqref{eq:Phi}):
\[ \| E(t-\tau)\ast A g(\tau,\cdot)\|_{L^{\alpha+1}} \leq \begin{cases}
C\,(t-\tau)^{-\delta_\BQ}\,\||D|^{-2}Ag(\tau,\cdot)\|_{L^{1+\frac1\alpha}} , & t-\tau\geq1,\\
C\,(t-\tau)^{-\frac{nd}2}\,\|\<D\>^{-2}Ag(\tau,\cdot)\|_{L^{1+\frac1\alpha}} , & t-\tau\in(0,1].
\end{cases} \]
Recalling that $nd<2$, the singularity $(t-\tau)^{-\frac{nd}2}$ is integrable; hence,
\[ \begin{split}
& \int_0^{t-1} \| E(t-\tau)\ast A g(\tau,\cdot)\|_{L^{\alpha+1}} d\tau \leq C\,\int_0^{t-1} (t-\tau)^{-\delta_\BQ}\,\|g(\tau,\cdot)\|_{L^{1+\frac1\alpha}}\,d\tau,\\
& \int_{t-1}^t \| E(t-\tau)\ast A g(\tau,\cdot)\|_{L^{\alpha+1}} d\tau \leq C\,\int_{t-1}^t (t-\tau)^{-\frac{nd}2}\,\|g(\tau,\cdot)\|_{L^{1+\frac1\alpha}}\,d\tau.
\end{split} \]
Since $u,v\in X$, we get
\[ \begin{split}
\|\<D\>^{-2}Ag(\tau,\cdot)\|_{L^{1+\frac1\alpha}}
    & \leq \||D|^{-2}Ag(\tau,\cdot)\|_{L^{1+\frac1\alpha}} \approx \|g(\tau,\cdot)\|_{L^{1+\frac1\alpha}} \\
    & \leq C\,R^{\alpha-1} \|u-v\|_X\,(1+\tau)^{-\alpha\delta_\BQ}.
\end{split} \]
We notice that $\delta_\BQ \leq (nd)/2<1$; hence, using $\alpha\delta_\BQ>1$, we immediately obtain (the estimate of this kind of integrals goes back at least to~\cite{Segal})
\[ \begin{split}
& \int_0^{t-1} (t-\tau)^{-\delta_\BQ}\,(1+\tau)^{-\alpha\delta_\BQ}\,d\tau \lesssim (1+t)^{-\delta_\BQ},\\
& \int_{t-1}^t (t-\tau)^{-\frac{nd}2}\,(1+\tau)^{-\alpha\delta_\BQ}\,d\tau \approx (1+t)^{-\alpha\delta_\BQ} \leq (1+t)^{-\delta_\BQ}.
\end{split} \]
In turn, we get
\[ \|(Nu-Nv)(t,\cdot)\|_{L^{\alpha+1}}\leq \int_0^t \| E(t-\tau)\ast A g(\tau,\cdot)\|_{L^{\alpha+1}} d\tau \leq C_2\,R^{\alpha-1} \|u-v\|_X\,(1+t)^{-\delta_\BQ}. \]
Recalling that $R=2C_1\|(u_0,u_1)\|_{\mathcal A}$ and letting $\|(u_0,u_1)\|_{\mathcal A}$ sufficiently small to obtain $C_2\,R^{\alpha-1}\leq 1/2$, we derive
\[ \|(Nu-Nv)\|_X \leq \frac12\,\|u-v\|_X. \]
Therefore, $F:X\to X$ and it is a contraction. By Banach's contraction principle, there is a unique $u\in X$ such that $Fu=u$, that is, $u$ is a solution to~\eqref{eq:NBQ} with $\e=0$. Moreover, $\|u\|_X \leq R=2C_1\|(u_0,u_1)\|_{\mathcal A}$, that is, we proved~\eqref{eq:decayNIBQ}.
\end{proof}
%
%
In the dissipative case $\e>0$, it is not obvious whether the best decay rate is obtained along the conjugate line or not. Let us fix $q\leq p'$. Since $\nu=1$ and $\ell=0$, in view of $\kappa_\low\geq 1+nd$, we find from~\eqref{eq:deltaVBQ} that
\[ \delta_\VBQ=\frac12\left(n\left(\frac1p-\frac1q\right)+1 +\frac{n-1}\tau\,d\right). \]
Replacing $q=\alpha p$, we find
\[ \delta_\VBQ(p)=\frac12\left(n\,\frac{\alpha-1}{\alpha p} +1 +\frac{n-1}\tau\,\left(1-\frac2{\alpha p}\right)\right); \]
due to
\[ \delta_\VBQ'(p)=-\frac1{2p^2\alpha}\left(n\,(\alpha-1) - 2\frac{n-1}\tau\right), \]
the maximum is obtained for $p=1+1/\alpha$ if $\alpha-1\leq \frac2\tau\,\frac{n-1}n$. The critical exponent is obtained setting $\alpha\,\delta_\VBQ=1$, that is,
\[ n\,\frac{\alpha-1}{p} +\alpha-2 +\frac{n-1}\tau\,\left(\alpha-\frac2{p}\right) =0. \]
For $p=1+1/\alpha$, $\alpha_c$ is the solution to
\[ \frac1{\alpha_c}=\left(n+\frac{n-1}{\tau}\right)\left(\frac{\alpha_c}{\alpha_c+1}-\frac12\right)+\frac12. \]
Results completely analogous to Theorem~\ref{thm:NBQ} follow, but they are expected to be optimal when $\alpha_c<1+\frac2\tau\,\frac{n-1}n$, that is, $n$ is sufficiently large with respect to $\tau$ to get:
\[ \frac{4}{\tau^2} (n-1)^2 + \frac2\tau (n-1)(n+2) -2n>0. \]
Concerning the initial data assumption, when $n\geq2$, we may take $\mathcal A$ as in~\eqref{eq:dataNBQ}, while for $n=1$, we may reduce the regularity of the data, replacing $\mathcal A$ in~\eqref{eq:dataNBQ} by
\[ \mathcal{A}:= H^{1, 1+\frac1\alpha}\times \dot H^{-1, 1+\frac1\alpha}. \]
\begin{remark}
The method to prove Theorem~\ref{thm:NBQ} may be easily adapted to other models. For instance, we may add the nonlinear perturbation $Af(u)$ to the plate equation~\eqref{eq:plate}, that is, we consider
\begin{equation}\label{eq:Nplate}\begin{cases}
u_{tt}+A^2 u + 2\e A u_t =Af(u), & x\in\R^n, \ t>0,\\
(u,u_t)(0,x)=(u_0,u_1)(x), & x\in\R^n.
\end{cases}\end{equation}
When $\e=0$ we obtain the low frequencies estimate
\begin{equation}\label{eq:Pcrucial}
\|\mathscr{F}^{-1}(\chi_0\hat u)(t, \cdot) \|_{L^q} \leq C\,t^{-\left(\frac{n-1}{\tau}+\frac12\right)d} \|(u_0,|D|^{-2}u_1)\|_{L^p},\quad t\geq1,
\end{equation}
setting $\nu=\ell=0$ in~\eqref{eq:platelow}. Setting $p=1+1/\alpha$ and $q=1+\alpha$, we obtain a result analogous to Theorem~\ref{thm:NBQ} with the same critical exponent $\alpha_c$ defined by the solution to~\eqref{eq:tauStrauss}. When $\e>0$, the decay rate is improved:
\begin{equation}\label{eq:Pcrucialeps}
\|\mathscr{F}^{-1}(\chi_0\hat u)(t, \cdot) \|_{L^q} \leq C\,t^{-\frac{n}2\left(\frac1p-\frac1q\right)} \|(u_0,|D|^{-2}u_1)\|_{L^p},\quad t\geq1.
\end{equation}
Therefore, it is more convenient to fix $p=1$ and $q=\alpha$, obtaining global small data solutions when $\alpha>1+2/n$, the Fujita critical exponent, in space dimension $n=1,2$ (since we need $q\geq2$), the same of the nonlinear heat equation. This latter phenomenon is expected, since the solution to the equation in~\eqref{eq:Nplate} behaves as the solution to $A^2 u + 2\e A u_t =Af(u)$ (the so-called ``diffusion phenomenon''), that is, as the solution to the heat equation $A u + 2\e u_t =f(u)$.
\end{remark}


\section{Proof of low frequencies estimates}\label{sec:theoremprooflow}

We first prove Theorem~\ref{thm:mainlow} when $\alpha=0$. More precisely, we prove the following.
\begin{theorem}\label{thm:mainlowdisp}
For any~$\xi$ with $\rho(\xi)\leq N_0$, we assume the following:
\begin{itemize}
\item that~\eqref{eq:wprimelow} and~\eqref{eq:wboundlow} hold for some $a>0$;
\item that~\eqref{eq:derlow} holds for some $m\geq2$ and $\tilde a\geq a$.
\end{itemize}
Let $1\leq p\leq 2\leq q\leq\infty$, $d=d(p,q)$ as in~\eqref{eq:d} and $\kappa$ as in~\eqref{eq:kappa}. We assume that $\kappa>0$. Then,
\begin{equation}\label{eq:estlowdisp}
\| \rho^\mu\,e^{it\omega(\rho)}\chi_0(\rho) \|_{M_p^q} \leq C\,(1+t)^{-\delta_\low},\qquad t\geq0,
\end{equation}
where $\delta_\low$ is as in~\eqref{eq:deltalowtheta3}-\eqref{eq:deltaundamped}, namely,
\begin{equation}\label{eq:deltalow0}
\delta_\low = \min\left\{ \frac\kappa{a}, \ \frac\kappa{\tilde a} + \left(1-\frac{a}{\tilde a}\right)\frac{n-1}\tau d, \ \left(\frac{n-1}\tau +\frac1m\right)d \right\}.
\end{equation}
Estimate~\eqref{eq:estlowdisp} also holds if $\kappa=0$ (so that $\delta_\low=0$) and $1<p\leq 2\leq q<\infty$. If~$\kappa=0$ and $p=1$ or $q=\infty$, estimate~\eqref{eq:estlowdisp} also holds if we replace $L^1$ by the real Hardy space $H^1$ and $L^\infty$ by $\BMO$:
\begin{equation}\label{eq:estlowdispH1}
\begin{split}
&\| \rho^\mu\,e^{it\omega(\rho)}\chi_0(\rho) \|_{M(H^1,L^q)} \leq C,\qquad t\geq1, \ p=1, q\in[2,\infty),\\
&\| \rho^\mu\,e^{it\omega(\rho)}\chi_0(\rho) \|_{M(L^p,\BMO)} \leq C,\qquad t\geq1, \ p\in(1,2], q=\infty,\\
&\| \rho^\mu\,e^{it\omega(\rho)}\chi_0(\rho) \|_{M(H^1,\BMO)} \leq C,\qquad t\geq1, \ (p,q)=(1,\infty).
\end{split}
\end{equation}
If $w(\rho)\sim \rho^a$ as $\rho\to0$, then~\eqref{eq:estlowdisp} remains valid replacing $e^{it\omega}$ by $e^{it\omega}-e^{-it\omega}$, even if we weaken our assumption~$\kappa>0$ to $\kappa>-a$. When $\kappa=-a$, \eqref{eq:estlowdisp} remains valid if $1<p\leq 2\leq q<\infty$ and we get \eqref{eq:estlowdispH1} with $C$ replaced by $Ct$ in the right-hand side.
\end{theorem}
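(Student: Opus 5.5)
The plan is to decompose $\chi_0=\sum_{N\le N_0}\psi_N(\rho)$ dyadically and, on each piece $K_N=\rho^\mu e^{it\omega(\rho)}\psi_N(\rho)$, interpolate three elementary bounds, then sum in $N$. By scaling, the kernel of $K_N$ is $N^{n+\mu}$ times an integral of the form \eqref{eq:IN}. Lemma~\ref{lem:low} then gives the $L^1-L^\infty$ bound
\[
\|K_N\|_{M_1^\infty}\lesssim N^{n+\mu}\,\Big(1\wedge (tN^a)^{-\frac{n-1}\tau}\big(1\wedge (tN^{\tilde a})^{-\frac1m}\big)\Big).
\]
This holds for the finitely many exceptional $N$; for all other $N$ we get the stronger bound $C_L N^{n+\mu}(1+tN^a)^{-L}$. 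Plancherel gives the complementary estimates $\|K_N\|_{M_2^2}\lesssim N^\mu$ and $\|K_N\|_{M_1^2}=\|K_N\|_{M_2^\infty}\lesssim N^{\mu+n/2}$. Riesz--Thorin interpolation among these, with weight $d$ on the dispersive $L^1-L^\infty$ bound, yields
\[
\|K_N\|_{M_p^q}\lesssim N^{\kappa}\Big(1\wedge (tN^a)^{-\frac{n-1}\tau}\big(1\wedge (tN^{\tilde a})^{-\frac1m}\big)\Big)^{d}
\]
for $1\le p\le2\le q\le\infty$. To see this, first interpolate $M_1^\infty$ with $M_1^2$ or $M_2^\infty$ according to whether $d=1-2/q$ or $d=2/p-1$, and then with $M_2^2$. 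The exponent of $N$ is exactly $\mu+n(1/p-1/q)=\kappa$.

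Next I sum over $N\le N_0$, using the triangle inequality in $M_p^q$, which is harmless since $\kappa>0$. The sum splits into three regimes: $N\le t^{-1/a}$, $t^{-1/a}\le N\le t^{-1/\tilde a}$, and $N\ge t^{-1/\tilde a}$.
\begin{itemize}
\item In the first regime the bound is $N^\kappa$, which sums to $t^{-\kappa/a}$.
\item In the second it is $N^{\kappa-a\frac{n-1}\tau d}t^{-\frac{n-1}\tau d}$, a geometric sum dominated by one endpoint. This gives either $t^{-\kappa/a}$ or $t^{-\kappa/\tilde a-(1-a/\tilde a)\frac{n-1}\tau d}$, and in the borderline case a logarithm that one must avoid or absorb.
\item In the third it is $N^{\kappa-(a\frac{n-1}\tau+\frac{\tilde a}m)d}t^{-(\frac{n-1}\tau+\frac1m)d}$, summed up to $N_0$. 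This gives the third term of \eqref{eq:deltalow0}, or the second term again if the exponent of $N$ is negative.
\end{itemize}
Taking the worst of the three regimes yields the minimum in \eqref{eq:deltalow0}. The non-exceptional $N$ contribute at most $t^{-\kappa/a}$ by the rapid decay in $tN^a$. For $t\le1$ the sum is just $\sum N^\kappa\lesssim1$.

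The main obstacle is the endpoint behaviour. At the borderline exponents, and at $\kappa=0$, the dyadic sum does not converge in $M_1^q$ or $M_p^\infty$, since the triangle inequality loses a logarithm. For $1<p\le2\le q<\infty$ with $\kappa=0$, I would not sum norms. Instead I would use the Littlewood--Paley square function together with the embeddings $L^p\hookrightarrow B^0_{p,2}$ and $B^0_{q,2}\hookrightarrow L^q$, so that uniform dyadic bounds $\|K_N\|_{M_p^q}\lesssim 1$ suffice (this is the Besov route of \cite{CMY}). For the endpoints $p=1$ or $q=\infty$ with $\kappa=0$ I would replace $L^1$ by $H^1$. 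The dyadic pieces of an $H^1$ function have $\ell^2$-summable square function in $L^1$, or equivalently one can pass through the atomic or Riesz characterization \eqref{eq:H1norm}, and by duality $\BMO$ serves the case $q=\infty$. Combined with the uniform dyadic bound, this gives \eqref{eq:estlowdispH1}; I would check this carefully via the Triebel--Lizorkin characterization $H^1=F^0_{1,2}$. Along the borderline of the second regime, the logarithm should be avoided by noting that equality of the exponents forces two terms of the minimum to coincide, and then using the strict inequality in the neighbouring regime.

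Finally, for $e^{it\omega}-e^{-it\omega}=2i\sin(t\omega)$ with $\omega\sim\rho^a$, I would use the additional bound $|\sin(t\omega)|\lesssim tN^a$ when $tN^a\le1$. Applied to the $M_2^2$, $M_1^2$ and $M_1^\infty$ estimates, where for $M_1^\infty$ one bounds the symbol's $L^1$ norm, this gives an extra factor $tN^a$ in the first regime. The first-regime sum then becomes $t\sum_{N\le t^{-1/a}}N^{\kappa+a}\sim t^{-\kappa/a}$, which converges whenever $\kappa>-a$. The other regimes are unchanged. At $\kappa=-a$ the same Besov/Hardy argument as above applies, with the uniform dyadic bound now $\lesssim t$, giving $Ct$.
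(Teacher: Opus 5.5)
You follow the paper's route up to the final step: the same dyadic blocks, the same interpolation of the $M_2^2$, $M_1^2$ (or $M_2^\infty$) and $M_1^\infty$ bounds, giving $N^\kappa(1\wedge\varphi^d)$ per block, and the same scales $t^{-1/a}$, $t^{-1/\tilde a}$.

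\textbf{The gap is in how you combine the blocks.} Summing block norms by the triangle inequality gives an extra $\log t$ whenever the exponent of $N$ in a regime vanishes. This happens at $\kappa=a\frac{n-1}\tau d$ and at $\kappa=(a\frac{n-1}\tau+\frac{\tilde a}m)d$, or at $\kappa=a(\frac{n-1}\tau+\frac1m)d$ if $\tilde a=a$, because then about $\log t$ blocks contribute equally. Your remedy (two terms of the minimum coincide, strict inequality in the neighbouring regime) does not change that count, so it is not an argument.

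The paper does not sum there. It uses the dichotomy of Lemma~\ref{lem:low}: all but finitely many $N$ satisfy \eqref{eq:INgood}. So the exceptional blocks enter only through a supremum over $N$, and that supremum is exactly the minimum in \eqref{eq:deltalow0}.

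For $1<p\le2\le q<\infty$ you already have the tool that yields a supremum. Your Littlewood--Paley/Besov argument bounds the operator by $\sup_N\|\rho^\mu e^{it\omega}\chi_0\tilde\psi_N\|_{M_p^q}$, with $\tilde\psi_N$ slightly enlarged cutoffs. Use it for every $\kappa\ge0$, not only $\kappa=0$, and those borderlines close. Your Besov/$F^0_{1,2}$ treatment of $\kappa=0$, and of $\kappa=-a$ for $e^{it\omega}-e^{-it\omega}$, is a sound alternative to the paper's Hardy--Littlewood--Sobolev plus Mikhlin argument on $\sum_{N\le\bar N}\psi_N$.

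\textbf{At the endpoints the first borderline cannot be repaired, by you or by the paper.} Take $p=1$ or $q=\infty$ with $\kappa>0$. The paper's device fails here for two reasons:
\begin{enumerate}
\item The dichotomy in Lemma~\ref{lem:low} holds for fixed $(t,x)$, while \eqref{eq:Mpq} uses $\sup_x|I_N|$ block by block.
\item In the degenerate case (e.g.\ $a=1<\tilde a$, as for \eqref{eq:BQ}), near $|x|=t\,\omega'(0)$ the radial phase derivative is only of size $tN^{\tilde a}$. So every block with $t^{-1/a}\lesssim N\lesssim t^{-1/\tilde a}$ has no stationary point and no decay either.
\end{enumerate}

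Concretely, take $n=3$, $\rho=\xii$, $\omega(\rho)=\rho\sqrt{1+\rho^2}$ (so $a=1$, $\tilde a=3$, $m=2$, $\tau=2$), $\mu=-2$ and $(p,q)=(1,\infty)$. Then $\kappa=1=\frac{n-1}\tau d$ and $\delta_\low=1$. The kernel at $|x|=t$ is a constant multiple of $t^{-1}\int_0^\infty \frac{\sin(rt)}{r}e^{it\omega(r)}\chi_0(r)\,dr$.

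Split at $r=1/t$ and write $\sin(rt)=\frac1{2i}(e^{irt}-e^{-irt})$ on $[1/t,\infty)$. Everything is then $O(1)$ except $-\frac1{2i}\int_{1/t}^{t^{-1/3}}\frac{e^{it(\omega(r)-r)}}{r}\,dr$. Since $0\le t(\omega(r)-r)\lesssim tr^3\le1$ on that range, this term equals $\frac{i}{3}\log t+O(1)$. Hence the $M_1^\infty$ norm is $\gtrsim t^{-1}\log t$.

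So the logarithm you found is genuine at $\kappa=a\frac{n-1}\tau d$ when $a<\tilde a$, at least for $(p,q)=(1,\infty)$. That case has to be excluded from \eqref{eq:estlowdisp}, or a factor $\log t$ allowed.

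At the other borderlines with $(p,q)=(1,\infty)$, in examples like this one, the logarithm is an artifact of bounding $\sum_N\sup_x|\mathscr{F}^{-1}(K_N)|$ instead of $\sup_x\sum_N|\mathscr{F}^{-1}(K_N)(x)|$. Working pointwise in $x$, only the blocks at the stationary scale carry the full bound, because the remaining blocks decay in $tN^{\tilde a}$ for $N\gtrsim t^{-1/\tilde a}$, or in $tN^a$ in the nondegenerate case. That pointwise estimate is the correct form of the paper's device.
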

The proof of Theorem~\ref{thm:mainlowdisp} is similar to the proof of~\cite[Theorem 1.2]{CMY}, but we consider the endpoint estimates, namely, $p=1$ and/or $q=\infty$ in $L^p-L^q$ estimates. 

By duality, we may assume with no restriction that $p\leq q'$, since $M_{q'}^{p'}=M_p^q$ and $M(H^1,L^q)=M(L^{q'},\BMO)$. Since we are interested in $L^p-L^q$ estimates with $p\leq 2\leq q$, this means that $p\leq q'\leq2$, in view of the previous duality assumption. Therefore, we may interpolate the norm of the multiplier in the following spaces: $M_2^2=L^\infty$, $M_1^2=L^2$ and $M_1^\infty=\mathscr{F}(L^\infty)$. Since the first two estimates involve $L^\infty$ and $L^2$ estimate of the multiplier itself, the oscillations play no role and we immediately obtain that for any $N$:
\[
\|\rho^\mu\,\,e^{it\omega}\,\psi_N\|_{M_2^2} = \|\rho^\mu\,\,\psi_N\|_{L^\infty} \sim N^\mu,\qquad \|\rho^\mu\,\,e^{it\omega}\,\psi_N\|_{M_1^2} = \|\rho^\mu\,\,\psi_N\|_{L^2} \sim N^{\mu+\frac{n}2}.
\]
Interpolating the $M_2^2$ and the $M_1^2$ norms, we get
\begin{equation}\label{eq:p2}
\|\rho^\mu\,\,e^{it\omega}\,\psi_N\|_{M_{p_0}^2} \lesssim N^{n\left(\frac1{p_0}-\frac12\right)+\mu}, \quad p_0\in[1,2].
\end{equation}
The theory of oscillatory integrals (Lemmas~\ref{lem:low} and~\ref{lem:hi}) comes into play in the $M_1^\infty=\mathscr{F}(L^\infty)$ estimate of the multiplier, that is, we shall estimate the $L^\infty$ norm of $\mathscr{F}^{-1}(\rho^\mu\,e^{it\omega}\,\psi_N)$. We get
\[ \|\rho^\mu\,\,e^{it\omega}\,\psi_N\|_{M_1^\infty} = N^{n+\mu}\,\sup_{x\in\R^n} |I_N(x,t)|, \]
where the notation $I_N$ is as in~\eqref{eq:IN}. Interpolating the above estimate with~\eqref{eq:p2}, we obtain
%
%
\begin{equation}\label{eq:Mpq}
\| \rho^\mu\,\,e^{it\omega}\,\psi_N \|_{M_p^q}
    \lesssim N^{\left(n\left(\frac1{p_0}-\frac12\right)+\mu\right)(1-d)}\,\big(N^{n+\mu}\,\sup_{x\in\R^n} |I_N(x,t)|\big)^{d} \sim N^\kappa\,\big(\sup_{x\in\R^n} |I_N(x,t)|\big)^{d},
\end{equation}
where $d=1-2/q$ as in~\eqref{eq:d} (due to the duality assumption $p\leq q'$) and $\kappa$ is as in~\eqref{eq:kappa}. %
%
%
The following behavior of geometric sums will be used:
\begin{equation}\label{eq:sum}
\sum_{N_1}^{N_2} N^\gamma= \sum_{k=k_1}^{k_2} 2^{k\gamma} = 2^{k_1\gamma}\sum_{k=0}^{k_2-k_1} 2^{k\gamma}
\sim \begin{cases}
2^{k_2\gamma}\sim N_2^\gamma , & \gamma>0, \\
k_2-k_1=\log_2(N_2/N_1) , & \gamma=0,\\
2^{k_1\gamma} = N_1^\gamma , & \gamma<0,
\end{cases}
\end{equation}
where $N=2^k$, with $k\in\Z$, $N_j=2^{k_j}$ and $k_1\ll k_2$. The above asymptotic behavior also applies to the series with $N_1=0$ (that is, $k_1=-\infty$) if $\gamma>0$, and with $N_2=\infty$ (that is, $k_2=+\infty$) if $\gamma<0$.

\subsection{Proof of Theorem~\ref{thm:mainlowdisp}}

\begin{proof}
For the ease of reading, we assume $N_0=1$ with no loss of generality.

We first assume $d>0$, that is, $p<2<q$, and $\kappa>0$. For any fixed $t>0$, it is sufficient to estimate
\[ \sum_{N\leq1}\|\rho^\mu\,e^{it\omega}\,\psi_N\|_{M_p^q} \leq \sum_{N\leq1} N^\kappa <\infty, \]
due to~$\kappa>0$; in particular,
\[ \sum_{N\leq1} \|\rho^\mu\,e^{it\omega}\psi_N\|_{M_p^q} \leq C, \qquad t\geq0,\]
so we may assume $t\geq1$ in the following and prove the desired decay estimate with $1+t$ replaced by $t$, for simplicity.

Thanks to Lemma~\ref{lem:low}, for all $N\leq 1$, with the exception of a finite number of them, we may estimate
\[ \sup_{x\in\R^n} |I_N(x,t)| \lesssim (1+tN^a)^{-L}, \]
for any $L>0$; by using the interpolation in~\eqref{eq:Mpq}, we obtain
\begin{equation}\label{eq:Igood}
\|\rho^\mu\,e^{it\omega}\,\psi_N\|_{M_p^q} \lesssim N^\kappa\,(1+tN^a)^{-Ld}.
\end{equation}
Summing over those $N$,
\[ \sum \|\rho^\mu\,e^{it\omega}\psi_N\|_{M_p^q} \lesssim \sum_{N\leq \bar N} N^\kappa + \sum_{\bar N\leq N\leq 1} N^\kappa\,(tN^a)^{-Ld}, \]
where we fixed $\bar N\sim t^{-\frac1{a}}$, so that $t\bar N^a\sim 1$. By~\eqref{eq:sum}, we get
\[ \begin{split}
& \sum_{N\leq\bar N} N^\kappa \sim \bar N^{\kappa} \sim t^{-\frac{\kappa}{a}}; \quad \text{on the other hand,} \\
& \sum_{\bar N\leq N} N^\kappa\,(tN^a)^{-Ld} \sim \bar N^\kappa (t\bar N^a)^{-Ld} \sim \bar N^\kappa \sim t^{-\frac{\kappa}{a}},
\end{split}\]
for a sufficiently large $L$ (such that $\kappa-Lda<0$). We now consider the finite number of $N$ such that the estimate for $|I_N(x,t)|$ is given by~\eqref{eq:INlow}, that is,
\[ |I_N(x,t)| \lesssim 1 \wedge (tN^{a})^{-\frac{n-1}\tau}(1\wedge (tN^{\tilde a})^{-\frac1m}). \]
By the interpolation in~\eqref{eq:Mpq}, we obtain
\begin{equation}\label{eq:varphi}
\|\rho^\mu\,e^{it\omega}\,\psi_N\|_{M_p^q} \lesssim N^\kappa\,(1\wedge \varphi^d), \qquad \varphi(t,N) = (tN^{a})^{-\frac{n-1}\tau}(1\wedge (tN^{\tilde a})^{-\frac1m}).
\end{equation}
We claim that $1\wedge \varphi^d\sim 1$ if $N\leq \bar N$, while $1\wedge \varphi^d\sim\varphi^d$ if $N\geq\bar N$. Indeed, it is sufficient to notice that
\[ \begin{split}
& 1\lesssim (tN^{a})^{-\frac{n-1}\tau} \approx (tN^{a})^{-\frac{n-1}\tau}(1\wedge (tN^{\tilde a})^{-\frac1m}), \quad \text{if $N\leq \bar N$, while}\\
& (tN^{a})^{-\frac{n-1}\tau}(1\wedge (tN^{\tilde a})^{-\frac1m} ) \leq (tN^{a})^{-\frac{n-1}\tau} \lesssim 1, \quad \text{if $N\geq \bar N$.}
\end{split} \]
In the first estimate, we used that $\tilde a\geq a$.

Replacing $1\wedge \varphi^d\sim1$ for $N\leq \bar N$, we immediately obtain
\[ \sup_{N\leq \bar N}\,\|\rho^\mu\,e^{it\omega}\,\psi_N\|_{M_p^q} \lesssim \sup_{N\leq \bar N}\,N^\kappa = \bar N^\kappa, \]
as before, so we now consider $N\geq \bar N$, for which $1\wedge\varphi^d\sim\varphi^d$.

We distinguish two cases. First, let $\tilde a=a$. In this case, $\varphi(t,N) \sim (tN^{a})^{-\frac{n-1}\tau-\frac1m}$, so that
\[ \sup_{\bar N\leq N \leq 1} N^\kappa\,\varphi^d \sim \sup_{\bar N\leq N \leq 1} N^\kappa \, (tN^{a})^{-\left(\frac{n-1}\tau+\frac1m\right)d} \sim t^{-\min\left\{\frac{\kappa}{a},\left(\frac{n-1}\tau+\frac1m\right)d\right\}}, \]
the first term in the minimum being attained for $N\sim \bar N$, that is, if
\[ \frac{\kappa}a \leq \left(\frac{n-1}\tau+\frac1m\right)d,\]
and the second one being attained for $N=1$. This concludes the proof for $\kappa>0$ when $\tilde a=a$.

Let us consider the degenerate case, that is, $\tilde a>a$. We now get a ``loss'' of $N^{-\frac{\tilde a-a}m}$ in one of the two options of~\eqref{eq:varphi}. This loss suggests us to introduce a new index $\tilde N\sim t^{-\frac1{\tilde a}}$; we notice that $\bar N\ll \tilde N$, due to $a<\tilde a$ for $t\gg1$, and that
\[ 1\wedge (tN^{\tilde a})^{-\frac1m} \sim \begin{cases}
1 , & \bar N \leq N\leq \tilde N,\\
(tN^{\tilde a})^{-\frac1m} , & \tilde N\leq N\leq 1.
\end{cases} \]
Then
\[ \begin{split}
& \sup_{\bar N\leq N\leq \tilde N} N^{\kappa} (tN^{a})^{-\frac{n-1}\tau d} \sim t^{-\min\left\{\frac\kappa{a}, \frac\kappa{\tilde a}+\left(1-\frac{a}{\tilde a}\right)\frac{n-1}\tau d\right\}}, \\
& \sup_{\tilde N\leq N\leq 1} N^{\kappa} (tN^{a})^{-\frac{n-1}\tau d}\,(tN^{\tilde a})^{-\frac{d}m} \sim t^{-\min\left\{ \frac\kappa{\tilde a}+\left(1-\frac{a}{\tilde a}\right)\frac{n-1}\tau d,\left(\frac{n-1}\tau +\frac1m\right)d\right\}},
\end{split} \]
where the two terms in the minimum are attained at $N=\bar N$ and at $N=\tilde N$, respectively, in the first line, and at $N=\tilde N$ and at $N=1$, respectively, in the second line. Summarizing,
\[ \sup_{N\leq 1} \| \rho^\mu\,e^{it\omega}\,\psi_N \|_{M_p^q} \lesssim t^{-\min\left\{\frac\kappa{a}, \frac\kappa{\tilde a}+\left(1-\frac{a}{\tilde a}\right)\frac{n-1}\tau d,\left(\frac{n-1}\tau +\frac1m\right)d\right\}}, \]
where the middle term in the minimum is new with respect to the nondegenerate case.

This concludes the proof for $\kappa>0$.

Now let $\kappa=0$, that is, $\mu=-n(1/p-1/q)$. In this case, it is not necessary to rely on a dyadic partition for $N\leq\bar N$, neither it is useful since the oscillations do not provide any decay in this region. We put $\psi_\low = \sum_{N\leq \bar N} \psi_N$ and we apply Hardy-Littlewood-Sobolev Theorem~\ref{thm:HLS} in~\textsection\ref{sec:multipliers} (that is, $\rho^\mu=\rho^{-n\left(\frac1p-\frac1q\right)}\in M_p^q$) and Mikhlin-H\"ormander Theorem~\ref{thm:Mik} in~\textsection\ref{sec:multipliers} to obtain that
\begin{equation}\label{eq:MH0}
\begin{split}
& \|\rho^{-n\left(\frac1p-\frac1q\right)}\,e^{it\omega}\,\psi_\low \|_{M_p^q} \lesssim \|e^{it\omega}\,\psi_\low \|_{M_p^p} \leq C,\ \text{if $1<p\leq2\leq q<\infty$,}\\
& \|\rho^{-n\left(\frac1p-\frac1q\right)}\,e^{it\omega}\,\psi_\low \|_{M(H^1,L^q)} \lesssim \|e^{it\omega}\,\psi_\low \|_{M(H^1,H^1)} 
\leq C,\ \text{if $p=1$ and $q\in[2,\infty)$,}\\
& \|\rho^{-n\left(\frac1p-\frac1q\right)}\,e^{it\omega}\,\psi_\low \|_{M(H^1,\BMO)} \lesssim \|e^{it\omega}\,\psi_\low \|_{M(H^1,H^1)}\leq C,\ \text{if $(p,q)=(1,\infty)$.}
\end{split}
\end{equation}
Indeed, $|\partial_\xi^\gamma(e^{it\omega})|\lesssim \xii^{-|\gamma|}$, since $t|\omega^{(k)}|\lesssim tN^{a-k}\lesssim N^{-k}$ due to $N\leq \bar N$.

Finally, let us replace $e^{it\omega}$ by $e^{it\omega}-e^{-it\omega}$ as in~\eqref{eq:Phisinc}. Using $e^{it\omega}-e^{-it\omega}\sim 2t\omega\,e^{-it\omega}$ for $t|\omega|$ small, we still get
\[ \sum_{N\leq \bar N}\,\|\rho^\mu\,(e^{it\omega}-e^{-it\omega})\,\psi_N\|_{M_p^q} \lesssim t \sum_{N\leq \bar N}\,N^{\kappa+a} = t \,\bar N^{\kappa+a} \sim t^{-\frac\kappa{a}}, \]
provided that $\kappa>-a$, and similarly for $\sup_{N\leq \bar N}\,\|\rho^\mu\,(e^{it\omega}-e^{-it\omega})\,\psi_N\|_{M_p^q}$. In the case $\kappa=-a$, we may estimate as in~\eqref{eq:MH0}, replacing $L^1$ by $H^1$ and $L^\infty$ by $\BMO$. %
%
%
This concludes the proof for $d>0$. Finally, let $d=0$. In this case, when $\kappa>0$, by~\eqref{eq:sum}, we directly estimate
\[ \sum_{N\leq 1} \| \rho^\mu\,e^{it\omega}\psi_N \|_{M_p^q} \lesssim \sum_{N\leq1} N^\kappa \sim 1. \]
When $\kappa=0$ and $1<p\leq 2\leq q<\infty$, or when we consider $e^{it\omega}-e^{-it\omega}$, we modify the proof as in the case $d>0$.
\end{proof}


\subsection{Proof of Theorem~\ref{thm:mainlow}}

Here we prove Theorem~\ref{thm:mainlow} (in the general case, when $\alpha$ is not identically zero).
\begin{proof}
We may assume $t\geq1$ and prove the decay estimate with $1+t$ replaced by $t$. We first assume that $1<p\leq 2\leq q<\infty$. To take into account of the interplay with the dissipation in Theorem~\ref{thm:mainlow}, we plan to estimate
\[ \|\rho^\mu\,e^{it\omega-t\alpha}\,\chi_0\|_{M_p^q} \leq \|\rho^{\mu-\theta\Xi}\,e^{it\omega}\,\chi_0\|_{M_p^q}\, \|\rho^{\theta\Xi}\,e^{-t\alpha}\|_{M_q^q}, \]
for some $\Xi\geq0$. Using that $\alpha^\Xi\approx \rho^{\theta\Xi}$, we get (see, for instance, \cite[Lemmas 3.1, 3.2]{DAE25}):
\begin{equation}\label{eq:estdiss}
\|\rho^{\theta\Xi}\,e^{-t\alpha}\|_{M_q^q} \lesssim t^{-\Xi},
\end{equation}
for any $q\in [1, \infty]$. Then we apply Theorem~\ref{thm:mainlowdisp} with $\rho^\mu$ replaced by $\rho^{\mu-\theta\Xi}$ and $\kappa$ replaced by $\kappa-\Xi\theta$:
\begin{equation}\label{dispersive}
\begin{split}
& \|\rho^{\mu-\theta\Xi}\,e^{it\omega(\rho)}\,\chi_0(\rho)\|_{M_p^q}
    \lesssim t^{-\delta_\low(\Xi)}, \quad t\geq1,\\
& \delta_\low(\Xi)
    = \min\left\{\frac{\kappa-\Xi\theta}{a}, \frac{\kappa-\Xi\theta}{\tilde a}+\left(1-\frac{a}{\tilde a}\right)\frac{n-1}\tau d, \left(\frac{n-1}\tau +\frac1{m}\right)d \right\}.
\end{split}
\end{equation}
To maximize the decay rate $\Xi+\delta_\low(\Xi)$ with respect to $\Xi\in[0,\kappa/\theta]$, we compute
\[ \partial_\Xi (\Xi+ \delta_\low(\Xi)) = \begin{cases}
1-\frac\theta{a} , & \frac{\kappa-\Xi\theta}{a}<\frac{n-1}\tau d,\\
1-\frac\theta{\tilde a} , & 0<\frac{\kappa-\Xi\theta}{a}-\frac{n-1}\tau d<\frac1{m}\,\frac{\tilde a}{a} d,\\
1 , & \frac{\kappa-\Xi\theta}{a}>\left(\frac{n-1}\tau +\frac1{m}\,\frac{\tilde a}{a}\right) d.
\end{cases} \]
We distinguish five cases:
\begin{itemize}
\item If $\theta\leq a$, $\partial_\Xi (\Xi+ \delta_\low(\Xi))\geq 0$, so that the maximum is obtained taking the largest possible value for $\Xi$, that is, setting $\Xi\theta=\kappa$; hence, the decay is $t^{-\Xi}=t^{-\frac\kappa\theta}$.
\item If $a<\theta\leq\tilde a$, then $\partial_\Xi (\Xi+ \delta_\low(\Xi))<0$ in the first interval and $\partial_\Xi (\Xi+ \delta_\low(\Xi))\geq0$ in the second and in the third interval. As a consequence, the maximum is obtained taking $\Xi=0$ if $\frac\kappa{a}<\frac{n-1}\tau d$, and taking
\[ \Xi\theta=\kappa-\frac{n-1}\tau da,\]
otherwise; in this latter case, the decay is $t^{-\Xi-\frac{\kappa-\Xi\theta}a}=t^{-\frac\kappa\theta-\left(1-\frac{a}\theta\right)\frac{n-1}\tau d}$; that is,
\[ \delta_\low=\min\left\{\frac\kappa{a},\frac\kappa\theta+\left(1-\frac{a}\theta\right)\frac{n-1}\tau d\right\}. \]
\item If $\tilde a<\theta$, then $\partial_\Xi (\Xi+ \delta_\low(\Xi))<0$ in the first and in the second interval and $\partial_\Xi (\Xi+ \delta_\low(\Xi))>0$ in the third interval. As a consequence, the maximum is obtained taking $\Xi=0$ if $\frac\kappa{a}<\left(\frac{n-1}\tau +\frac1{m}\,\frac{\tilde a}{a}\right) d$, and taking
\[ \Xi\theta=\kappa-\left(\frac{n-1}\tau a+\frac1{m}\tilde a\right)d,\]
otherwise; in this latter case, the decay rate is
\[ t^{-\Xi-\frac{\kappa-\Xi\theta}{\tilde a}-\left(1-\frac{a}{\tilde a}\right)\frac{n-1}\tau d}=t^{-\frac\kappa\theta-\left(1-\frac{a}\theta\right)\frac{n-1}\tau d-\left(1-\frac{\tilde a}\theta\right)\frac1{m}d};\]
therefore,
\[ \delta_\low=\min\left\{\frac\kappa{a},\frac{\kappa}{\tilde a}+\left(1-\frac{a}{\tilde a}\right)\frac{n-1}\tau d,\frac\kappa\theta+\left(1-\frac{a}\theta\right)\frac{n-1}\tau d+\left(1-\frac{\tilde a}\theta\right)\frac1{m}d\right\}. \]
\end{itemize}
This leads to the decay rate in~\eqref{eq:estlow}.

Now we consider the case $p=1$ or $q=\infty$, under the assumption that $\kappa>0$. The proof follows as before, with one important exception. If $\theta\leq a$, or $\theta\leq \tilde{a}$ and $n=1$, then we set $\Xi$ such that $\kappa-\Xi\theta=0$.\footnote{The same consideration applies when $\theta>\tilde a$, $n=1$ and $m=\infty$, but this latter is a case with no dispersion ($\delta_\low=\kappa/\theta$), as for the one-dimensional strongly damped wave equation, so its interest is limited} Therefore, we shall replace the space $L^1$ by $H^1$ and the space $L^\infty$ by $\BMO$, as in~\eqref{eq:MH0}. Let us consider first the case $p=1$ and $q\in[2,\infty)$. In this case, we use~\cite[Corollary 3.1]{DAE25} to improve~\eqref{eq:estdiss} to an estimate from the space $L^1$ to its subspace $H^1$:
\begin{equation}\label{eq:estdissH1}
\|\rho^{\theta\Xi}\,\,e^{-t\alpha}\|_{M(L^1,H^1)} \lesssim t^{-\Xi},
\end{equation}
for $\Xi>0$, so that
\[ \|\rho^\mu\,e^{it\omega-t\alpha}\,\chi_0\|_{M_1^q} \leq \|\rho^{\theta\Xi}\,e^{-t\alpha}\|_{M(L^1,H^1)}\, \|\rho^{\mu-\theta\Xi}\,e^{it\omega}\,\chi_0\|_{M(H^1,L^q)}.  \]
By duality, we treat the case $1<p\leq 2$ and $q=\infty$, while for $(p,q)=(1,\infty)$, we estimate
\[ \begin{split}
& \|\rho^\mu\,e^{it\omega-t\alpha}\,\chi_0\|_{M_1^\infty}\\
    & \leq \|\rho^{\theta\frac\Xi2}\,e^{-t\frac{\alpha}2}\|_{M(L^1,H^1)}\, \|\rho^{\mu-\theta\Xi}\,\alpha^{-\Xi}\,e^{it\omega}\,\chi_0\|_{M(H^1,\BMO)}\,\|\rho^{\theta\frac\Xi2}\,e^{-t\frac{\alpha}2}\|_{M(\BMO,L^\infty)},
\end{split} \]
and we use the dual estimate $M(\BMO,L^\infty)$ of~\eqref{eq:estdissH1} as well.

The case when $e^{it\omega}$ is replaced by $e^{it\omega}-e^{-it\omega}$ is treated in a similar way. The only one case where the choice of $\Xi$ that maximizes the decay is different with respect of the previous scenario, occurs when $\theta<a$; in this case, we fix $\Xi\theta=\kappa+a$, so that the decay rate is $t^{1-\Xi}=t^{1-\frac{\kappa+a}\theta}$.
\end{proof}

\section{Proof of high frequencies estimates}\label{sec:theoremproofhi}

The proof of Theorem~\ref{thm:mainhi} is relatively similar to the proof of~\cite[Theorem 1.2]{CMY}, but we consider the endpoint estimates, namely, $p=1$ and/or $q=\infty$ in $L^p-L^q$ estimates.
\begin{proof}[Proof of Theorem~\ref{thm:mainhi}]
For the ease of reading, we assume $N_1=1$ with no loss of generality.

First let $d>0$, that is, $p<2<q$.

Thanks to Lemma~\ref{lem:hi}, for all $N\geq 1$, with the exception of a finite number of them, we get~\eqref{eq:Igood}, as in the proof of Theorem~\ref{thm:mainlow}. For $t\in(0,1]$ we fix $\bar N$ such that
\[ t \bar N^b\sim 1,\qquad \text{i.e.}, \quad \bar N \sim t^{-\frac1{b}}, \]
similarly to what we did in the proof of Theorem~\ref{thm:mainlow}. For $t\geq1$, we fix $\bar N=1$. Summing over the $N$ for which~\eqref{eq:Igood} holds (the first sum in the right-hand side does not appear if $t\geq1$), by~\eqref{eq:sum} we get
\[
\sum \|\rho^\mu\,e^{it\omega}\,\psi_N\|_{M_p^q} \lesssim \sum_{1\leq N\leq \bar N} N^\kappa + \sum_{\bar N\leq N} N^\kappa\,(tN^b)^{-Ld} \sim \begin{cases}
t^{-\frac{\kappa}{b}} , & t\in(0,1],\\
t^{-Ld} , & t\geq1,
\end{cases} \]
%
%
for any sufficiently large $L$ (such that $\kappa-Ldb<0$).

We now consider the finite number of $N$ such that the estimate for $|I_N(x,t)|$ is given by~\eqref{eq:INhi}:
\[ |I_N(x,t)| \lesssim 1 \wedge (tN^{b})^{-\frac{n-1}\tau}(1\wedge (tN^{\tilde b})^{-\frac1m}). \]
By the interpolation in~\eqref{eq:Mpq}, we obtain
\begin{equation}\label{eq:varphihi}
\|\rho^\mu\,e^{it\omega}\,\psi_N\|_{M_p^q} \lesssim N^\kappa\,(1\wedge \varphi^d), \qquad \varphi(t,N) = (tN^{b})^{-\frac{n-1}\tau}(1\wedge (tN^{\tilde b})^{-\frac1m}).
\end{equation}
Indeed,
\[ \begin{split}
& 1\lesssim (tN^{b})^{-\frac{n-1}\tau}\approx (tN^{b})^{-\frac{n-1}\tau}(1\wedge (tN^{\tilde b})^{-\frac1m})\quad \text{if $N\leq \bar N$, while}\\
& (tN^{b})^{-\frac{n-1}\tau}(1\wedge (tN^{\tilde b})^{-\frac1m}) \leq (tN^{b})^{-\frac{n-1}\tau} \lesssim1,\quad \text{if $N\geq \bar N$.}
\end{split}\]
In the first estimate, we used that $\tilde b\leq b$.

If $t\in(0,1]$ and $N\leq \bar N$, by $1\wedge \varphi^d\sim1$, we immediately obtain $\|\rho^\mu\,e^{it\omega}\,\psi_N\|_{M_p^q} \lesssim N^\kappa \leq \bar N^\kappa$, as before, so we now consider $N\geq \bar N$, for which $1\wedge\varphi^d\sim\varphi^d$.

We distinguish two cases. First, let $\tilde b=b$. In this case, $\varphi(t,N) = (tN^{b})^{-\frac{n-1}\tau-\frac1m}$, so that
\[ \sup_{\bar N\leq N} N^\kappa\,\varphi^d \sim \sup_{\bar N\leq N} N^\kappa \, (tN^{b})^{-\left(\frac{n-1}\tau+\frac1m\right)d} \sim \begin{cases}
t^{-\left(\frac{n-1}\tau+\frac1m\right)d} , & t\geq1,\\
t^{-\frac{\kappa}{b}} , & t\in(0,1],
\end{cases} \]
under the assumption~\eqref{eq:reg} (the supremum is attained at $N\sim\bar N$).

Now let us consider the degenerate case, that is, $\tilde b<0<b$.

First, let $t\geq1$. Then we set $\tilde N\sim t^{\frac1{-\tilde b}}$; due to
\[ 1\wedge (tN^{\tilde b})^{-\frac1m} \sim \begin{cases}
(tN^{\tilde b})^{-\frac1m} , & 1 \leq N\leq \tilde N,\\
1 , & \tilde N\leq N,
\end{cases} \]
we get
\[ \sup_{1\leq N\leq \tilde N} N^{\kappa} (tN^{b})^{-\frac{n-1}\tau d}\,(tN^{\tilde b})^{-\frac{d}m} \sim t^{-\min\left\{\left(\frac{n-1}\tau + \frac1m\right)d, \frac\kappa{\tilde b} + \left(1-\frac{b}{\tilde b}\right)\frac{n-1}\tau d \right\}}, \]
where the two terms in the minimum are attained at $N=1$ and at $N=\tilde N$, respectively. Similarly,
\[ \sup_{\tilde N\leq N} N^{\kappa} (tN^{b})^{-\frac{n-1}\tau d} \sim t^{\frac\kappa{-\tilde b}-\left(1+\frac{b}{-\tilde b}\right)\frac{n-1}\tau d}, \]
provided that~\eqref{eq:shrink} holds. Now, let $t\in(0,1]$. In this case, $1\wedge (tN^{\tilde b})^{-\frac1m}\sim 1$, so that
\[ \sup_{\bar N\leq N} N^\kappa\,(tN^{b})^{-\frac{n-1}\tau d} \sim \bar N^\kappa \sim t^{-\frac\kappa{b}}, \]
provided that~\eqref{eq:shrink} holds. This concludes the proof for $\kappa>0$.

Now, let $1<p\leq 2\leq q<\infty$ and $\kappa=0$. In this case, as in the end of the proof of Theorem~\ref{thm:mainlow}, it is not necessary to rely on a dyadic partition for $t\in(0,1]$ and $1\leq N\leq\bar N$, neither it is useful since the oscillations do not provide any decay in this region. We put $\psi_\low = \sum_{1\leq N\leq \bar N} \psi_N$ and we get~\eqref{eq:MH0} again.

In the case $\kappa<0$, by~\eqref{eq:sum} we may just replace $\sum_{1\leq N\leq\bar N} N^\kappa \sim 1$.


When $d=0$, the proof given for $d>0$ remains valid for $t\in(0,1]$, while for $t\geq1$, we proceed as in the end of the proof of Theorem~\ref{thm:mainlowdisp}.
\end{proof}
\begin{remark}\label{rem:kappaneghi}
The case $\kappa=0$ may also be included in Theorem~\ref{thm:mainhi} when $p=1$ or $q=\infty$. When $t\ll1$ and $p=1$, recalling that $M_1^q=\mathscr{F}(L^q)\subset L^{q'}$, it is sufficient to replace~\eqref{eq:MH0} by
\[
\|\rho^\mu\,e^{it\omega}\,\psi_\low \|_{M_1^q} = \|\mathscr{F}^{-1}(\rho^\mu\,e^{it\omega}\,\psi_\low)\|_{L^q} \lesssim \|\rho^\mu\,e^{it\omega}\,\psi_\low \|_{L^{q'}}\lesssim (-\log t)^{1-\frac1q},
\]
where we used that $\bar N\sim -\log_2t\sim -\log t$. By duality ($M_p^\infty=M_1^{p'}$) we may also deal with $q=\infty$. 
\end{remark}


We now prove~Theorem~\ref{thm:mainhiXi}.
%
%
\begin{proof}[Proof of~Theorem~\ref{thm:mainhiXi}]
First let $1<p\leq 2\leq q<\infty$. The presence of a dissipation generates an exponential decay in time at high frequencies, in the following sense:
\begin{equation}\label{eq:exponential}
\| \alpha^{\Xi}\,e^{-t\alpha(\rho^2)}\chi_1\|_{M_q^q} \lesssim e^{-ct}, \quad t\geq1.
\end{equation}
On the other hand, for short time, $t\in(0,1]$, we use once again~\eqref{eq:estdiss}. %
%
%
Applying Theorem \ref{thm:mainhi}, we get
\[
\|\rho^{\mu-\theta\Xi}\,e^{it\omega(\rho)}\,\chi_1(\rho)\|_{M_q^q}\lesssim t^{-\frac{\kappa -\Xi\theta}{b}},
\]
for $t\in(0,1]$ and some polynomial decay for $t\geq1$, provided  that
\[\frac{\kappa-\Xi\theta}{b} \leq \begin{cases}
\left(\frac{n-1}\tau + \frac1m\right) d , & \tilde b=b,\\
\frac{n-1}\tau  d , & \tilde b<0<b.
\end{cases}\]
The polynomial decay for $t\geq1$ is dominated by the exponential decay in~\eqref{eq:exponential}, while the estimate for $t\in(0,1]$ is given by $t^{-\Xi-\frac{\kappa -\Xi\theta}{b}}$ as in the proof of Theorem~\ref{thm:mainlow}. The proof is concluded by choosing $\Xi$ as in \eqref{eq:Xi}.

In the case $p=1$ or $q=\infty$, we proceed as in the proof of Theorem~\ref{thm:mainlow}, replacing $L^1$ by $H^1$ and $L^\infty$ by $\BMO$ in Theorem~\ref{thm:mainhi} when $\kappa=0$, and relying on~\eqref{eq:estdissH1} in place of~\eqref{eq:estdiss}.
\end{proof}


\appendix

\section{The improved Boussinesq equation: a case of regularity loss}\label{sec:bneg}

Here we discuss the case when $\rho\omega'(\rho)\to0$ as $\rho\to\infty$, which occurs, for instance, in the Improved Boussinesq model:
\begin{equation}\label{eq:VIBQ}\tag{IBQ-eq}
u_{tt} + Au_{tt} + A u + 2\e A u_t =0.
\end{equation}
After rewriting~\eqref{eq:VIBQ} in normal form,
\begin{equation}\label{eq:VIBQnormal}
u_{tt} + (\Id+A)^{-1}\,A u + 2\e (\Id+A)^{-1}\,A u_t =0,
\end{equation}
we see that~\eqref{eq:VIBQnormal} is in the form of \eqref{eq:abgeneral}.

%
%
%
%
We consider the initial value problem for the Improved Boussinesq equation~\eqref{eq:VIBQ}:
\begin{equation}\label{eq:IBQ}\tag{IBQ}
\begin{cases}
u_{tt} +A u_{tt} + A u +2\e Au_t=0 , & t>0,\ x\in\R^n,\\
(u,u_t)(0,x)=(u_0,u_1)(x).
\end{cases}
\end{equation}
With the notation in~\eqref{eq:abgeneral}, $\alpha=\e \rho^2\<\rho\>^{-2}$ and $\beta=\rho^2\<\rho\>^{-2}$, so that
\[ \omega = \rho\<\rho\>^{-2}\,\sqrt{1+\rho^2(1-\e^2)}. \]
By straightforward computation:
\begin{equation}\label{eq:derVIBQ}
\begin{split}
\omega'(\rho)
    & = \frac{1+\rho^2(1-2\e^2)}{\sqrt{1+\rho^2(1-\e^2)}}\,\<\rho\>^{-4}>0,\\
\omega''(\rho)
    & = -\rho\,\frac{3\rho^4(1-3\e^2+2\e^4)+2\rho^2(3-3\e^2-\e^4)+3(1+\e^2)}{(\<\rho\>^8-\e^2\rho^2\<\rho\>^6)\sqrt{1+\rho^2(1-\e^2)}}<0,
\end{split}
\end{equation}
provided that $2\e^2<1$, for any~$\rho>0$. The case $\e=0$ corresponds to the inviscid Improved Boussinesq model. With the same notation in Theorems~\ref{thm:mainlow} and~\ref{thm:mainhi}, we have $a=1$, $\tilde a=3$, $b=\tilde b=-2$. In particular, $\omega''$ has the degeneracy at low frequencies of the Boussinesq equation~\eqref{eq:BQ}. Let $\ell\in\N$ and $\nu,\nu_0\in\R$. Since $|i\omega+\alpha|^\ell \approx (\rho\<\rho\>^{-1})^{\ell}$, we define $\kappa_\low=n(1/p-1/q)+\nu+\ell$ as in~\eqref{eq:klowBQ} , and
\begin{equation}
\label{eq:khiIBQ}
\kappa_\high = n\left(\frac1p-\frac1q\right)+\nu-\nu_0.
\end{equation}
%
%
%
%
The solution to~\eqref{eq:IBQ} with $\e=0$ verifies the low frequencies estimate~\eqref{eq:BQlow}, with $\kappa_\low$ as in~\eqref{eq:klowBQ} and $\delta_\BQ$ as in~\eqref{eq:deltaBQ}. In the viscous case, $\delta_\BQ$ is improved to $\delta_\VBQ$ as in~\eqref{eq:deltaVBQ}, due to $\alpha=\e\rho^2\<\rho\>^{-2}\approx \rho^2$ at low frequencies.

To deal with high frequencies, however, we need a result analogous to Theorem~\ref{thm:mainhi} with $b<0$.
\begin{theorem}\label{thm:hineg}
For any~$\xi$ with $\rho(\xi)\geq N_1/2$, we assume the following:
\begin{itemize}
\item that~\eqref{eq:wprimehi} and~\eqref{eq:wboundhi} hold for some $b<0$;
\item that~\eqref{eq:derhi} holds for some $m\geq2$ and $\tilde b=b$.
\end{itemize}
Let $1\leq p\leq 2\leq q\leq\infty$, $d=d(p,q)$ as in~\eqref{eq:d} and~$\kappa$ as in~\eqref{eq:kappa}. Then
\begin{equation}\label{eq:esthineg}
\| \rho^\mu\,e^{it\omega(\rho)}\chi_1(\rho) \|_{M_p^q} \leq C\,(1+t)^{-\min\left\{\left(\frac{n-1}\tau+\frac1m\right)d,\frac\kappa{b}\right\}},\qquad t\geq0,
\end{equation}
provided that $\mu\in\R$ is such that $\kappa<0$, or $\kappa\leq0$ if $1<p\leq 2\leq q<\infty$. If we consider $e^{it\omega}-e^{-it\omega}$ as in~\eqref{eq:Phisinc} and we assume that $w\sim \rho^{b}$ as $\rho\to\infty$, then~\eqref{eq:esthineg} remains valid even if we weaken our assumption to $\kappa<-b$, or $\kappa\leq-b$, if $1<p\leq 2\leq q<\infty$.
\end{theorem}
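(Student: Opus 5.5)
We follow the dyadic scheme of the proofs of Theorems~\ref{thm:mainlowdisp} and~\ref{thm:mainhi}, taking $N_1=1$ and, by duality, $p\leq q'$. For each $N\geq1$ we use the interpolation bound~\eqref{eq:Mpq},
\[ \|\rho^\mu e^{it\omega}\psi_N\|_{M_p^q}\lesssim N^\kappa\big(\sup_x|I_N(x,t)|\big)^d, \]
and control $I_N$ by Lemma~\ref{lem:hi}. Its proof goes through for $b<0$, since it uses only the scaling $\rho|\omega'|\sim\rho^b$ and the finiteness of the set of $N$ carrying stationary points. For all but finitely many $N$ we therefore have $|I_N|\lesssim(1+tN^b)^{-L}$. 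For the remaining $N$ we have $|I_N|\lesssim 1\wedge(tN^b)^{-\frac{n-1}\tau-\frac1m}$, because $\tilde b=b$ gives the nondegenerate form~\eqref{eq:INhinodeg}.

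The difference from Theorem~\ref{thm:mainhi} is that $b<0$ reverses the roles of the frequency ranges. The quantity $tN^b$ is large for small $N$ and small for large $N$. We set $\bar N\sim t^{-1/b}=t^{1/|b|}$ for $t\geq1$, and $\bar N=1$ for $t\leq1$.

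For $1\leq N\leq\bar N$ we have $tN^b\gtrsim1$, so we get the oscillatory bound
\[ N^\kappa (tN^b)^{-\left(\frac{n-1}\tau+\frac1m\right)d}=t^{-\left(\frac{n-1}\tau+\frac1m\right)d}\,N^{\kappa-b\left(\frac{n-1}\tau+\frac1m\right)d}. \]
Depending on the sign of the exponent of $N$, the supremum over this range is attained either at $N=1$, giving $t^{-(\frac{n-1}\tau+\frac1m)d}$, or at $N=\bar N$, giving $\bar N^\kappa=t^{-\kappa/b}$. This yields the minimum in~\eqref{eq:esthineg}. Note that $-\kappa/b\le0$ here, so the second alternative is in fact a growth or a constant. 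This is consistent with the statement, which allows $\kappa/b\geq0$.

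For $N\geq\bar N$ the oscillation gives nothing, and we bound by $N^\kappa$. Since $\kappa<0$, the geometric sum~\eqref{eq:sum} gives $\sum_{N\geq\bar N}N^\kappa\sim\bar N^\kappa=t^{-\kappa/b}$ for $t\geq1$, and $\sim1$ for $t\leq1$. Over the non-exceptional $N\leq\bar N$ we sum $N^\kappa(tN^b)^{-Ld}$ with $L$ large and get at most the same quantities. For $t\le 1$ every term is bounded by $N^\kappa$, which is summable, so the bound is $C$. Combining, we obtain $(1+t)^{-\min\{\cdot\}}$.

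The delicate point, which I expect to be the main obstacle, is the borderline case $\kappa=0$ with $1<p\le2\le q<\infty$ (the case $d=0$ is similar). Here $\sum_{N\geq\bar N}N^\kappa$ diverges. As in~\eqref{eq:MH0}, we treat the block $\psi_{\high}=\sum_{N\geq\bar N}\psi_N$ directly. For $N\geq\bar N$ we have $t|\omega^{(k)}|\lesssim tN^{b-k}\lesssim N^{-k}$, so $e^{it\omega}\psi_{\high}$ satisfies the Mikhlin--H\"ormander hypotheses of Theorem~\ref{thm:Mik} uniformly in $t$. Combined with Theorem~\ref{thm:HLS} for $\rho^\mu=\rho^{-n(1/p-1/q)}$, this gives a bound by a constant. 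The restriction $1<p$, $q<\infty$ is needed exactly here.

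Finally, for $e^{it\omega}-e^{-it\omega}$ with $\omega\sim\rho^b$: on $N\geq\bar N$ we have $t\omega\lesssim tN^b\lesssim1$, so $|\sin(t\omega)|\lesssim tN^b$. The tail becomes $t\sum_{N\geq\bar N}N^{\kappa+b}\sim t\bar N^{\kappa+b}=t^{-\kappa/b}$, which converges as long as $\kappa<-b$. The Mikhlin argument with the extra factor $t\rho^b$ handles the endpoint $\kappa=-b$ when $1<p\le2\le q<\infty$. The range $N\leq\bar N$ is estimated exactly as before.
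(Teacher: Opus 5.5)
Your proposal is correct and follows the paper's proof essentially step by step. It makes the same choice $\bar N\sim t^{1/|b|}$ for $t\geq1$ and $\bar N=1$ for $t\le1$, uses rapid decay on the non-exceptional blocks $N\le\bar N$, and bounds the blocks $N\geq\bar N$ by $N^\kappa$ summed via~\eqref{eq:sum}. For the finitely many exceptional blocks it takes the supremum of $N^\kappa(tN^b)^{-(\frac{n-1}\tau+\frac1m)d}$ over $[1,\bar N]$. It handles $\kappa=0$ by the Mikhlin--H\"ormander/Hardy--Littlewood--Sobolev argument and $e^{it\omega}-e^{-it\omega}$ by $|\sin(t\omega)|\lesssim tN^b$, exactly as at the end of the proof of Theorem~\ref{thm:mainlowdisp}.

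One small slip in your commentary: since $-\kappa/b\le 0$, the bound $t^{-\kappa/b}=\bar N^{\kappa}$ is a decay (or a constant when $\kappa=0$), not a growth. Growth appears only in the $\sin$ case, when $\kappa>0$.
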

\begin{proof}[Proof of Theorem~\ref{thm:hineg}]
We assume $d>0$, since the case $d=0$ may be treated as in the end of the proofs of Theorems~\ref{thm:mainlowdisp} and~\ref{thm:mainhi}. We proceed as in the proof of Theorem~\ref{thm:mainhi}, but we now define $\bar N\sim t^{\frac1{-b}}$ for $t\geq1$, and $\bar N=1$ for $t\in(0,1]$, so that (the first term in the right-hand side does not appear for $t\in(0,1]$), by~\eqref{eq:sum} we get
\[ \sum \|\rho^\mu\,e^{it\omega}\,\psi_N\|_{M_p^q} \lesssim \sum_{1\leq N\leq \bar N} N^\kappa\,(tN^{b})^{-Ld} + \sum_{\bar N\leq N} N^\kappa \sim \begin{cases}
1 , & t\in(0,1], \\
t^{-\frac{\kappa}{b}} , & t\geq1,
\end{cases} \]
for any sufficiently large $L$ (such that $\kappa+Ld(-b)>0$), provided that $\kappa<0$ (no regularizing effect is obtained by the dispersion). We now consider the finite number of $N$ such that the estimate for $|I_N(x,t)|$ is given by~\eqref{eq:INhi}. Then~\eqref{eq:varphihi} holds. %
%
%
If $N\geq \bar N$, by $1\wedge \varphi^d\sim1$, we immediately obtain
\[ \|\rho^\mu\,e^{it\omega}\,\psi_N\|_{M_p^q} \lesssim N^\kappa \leq \bar N^\kappa, \]
as before, so we now consider $t\geq1$ and $N\leq \bar N$, for which $1\wedge\varphi^d\sim\varphi^d$. Then, 
\[ \sup_{1\leq N \leq \bar N} N^\kappa\,\varphi^d \sim \sup_{1\leq N \leq \bar N} N^\kappa \, (tN^{b})^{-\left(\frac{n-1}\tau+\frac1m\right)d} \sim t^{-\min\left\{\frac\kappa{b},\left(\frac{n-1}\tau+\frac1m\right)d\right\}}.\]
%
This concludes the proof for $\kappa<0$. The case $\kappa=0$ when $1<p\leq 2\leq q<\infty$, and the case of $e^{it\omega}-e^{-it\omega}$ as in~\eqref{eq:Phisinc},
may be treated as in the end of the proof of Theorem~\ref{thm:mainlowdisp}.
\end{proof}
Applying Theorem~\ref{thm:hineg} with $b=-2$, the solution to~\eqref{eq:IBQ} with $\e=0$ verifies the high frequencies estimate
\begin{equation}\label{eq:IBQhi}
\||D|^\nu \partial_t^\ell \mathscr{F}^{-1}(\chi_1\hat u)(t,\cdot) \|_{L^q} \lesssim (1+t)^{-\min\left\{\left(\frac{n-1}\tau+\frac12\right)d,\frac{-\kappa_\high}2\right\}}\, \|\<D\>^{\nu_0} (u_0,u_1)\|_{L^p}, \qquad t\geq0.
\end{equation}
In particular, the solution to~\eqref{eq:IBQ} with $\e=0$ verifies the long time estimate
\begin{equation}\label{eq:estIBQ}\begin{split}
& \||D|^\nu \partial_t^\ell  u (t,\cdot) \|_{L^q} \lesssim t^{-\delta_\IBQ}\, \|\<D\>^{\nu_0} (u_0,|D|^{-1}\<D\>u_1)\|_{L^p}, \qquad t\geq1,\\
& \delta_\IBQ=\min \left\{\delta_\BQ, \frac{-\kappa_\high}2\right\}.
\end{split}\end{equation}
We stress that the decay rate structure in~\eqref{eq:estIBQ} shows the ``regularity-loss decay'' property: enlarging the data regularity $\nu_0$ may increase the decay rate, since $-\kappa_\high$ is increasing with respect to $\nu_0$.

Assuming $q=p'$, so that $\kappa_\high=nd+\nu-\nu_0$, and
\[ \kappa_\low\geq \left(\frac{n-1}\tau +\frac32\right) d, \quad \text{so that}\qquad \delta_\BQ = \left(\frac{n-1}\tau+\frac12\right)d, \]
we find that $\delta_\IBQ=\delta_\BQ$ if, and only if,
\[ \nu_0 \geq \nu + 2\left(\frac{n-1}\tau+\frac12\right)d + nd. \]
In the case $\tau=2$ and $\nu=n/q$, we find the same regularity $\nu_0\geq (2-3/q)n$ obtained in \cite[Theorem~1]{Cho-Ozawa=2007} for $A=-\Delta$.

When $\e\in(0,\e_0]$, the dissipation produces an exponential decay, but it does not produce smoothing as it happened for the Boussinesq equation, due to~$\alpha\approx 1$ at high frequencies, that is, $\theta=0$. Therefore, the restriction from above on $\kappa_\high$ remains for the viscous model. However, the decay rate in the estimate is independent on $\kappa_\high$, in the sense that the solution to~\eqref{eq:IBQ} with $\e\in(0,\e_0]$ verifies the high frequencies estimate
\begin{equation}\label{eq:estVIBQ}
\||D|^\nu \partial_t^\ell  u (t,\cdot) \|_{L^q} \leq C (1+t)^{-\delta_\VBQ}\, \|\<D\>^{\nu_0} (u_0,|D|^{-1}\<D\>u_1)\|_{L^p}, \qquad t\geq0,
\end{equation}
provided that $\nu_0$ is sufficiently large that $\kappa_\high$ verifies the desired bound from above.
\begin{remark}\label{rem:IBQ1inf}
Let $A=-\Delta $ ($\tau=2$) and $\e=0$. Estimates for the Improved Boussinesq equation are derived  in \cite{Cho-Ozawa=2006} for $n=1$ and in \cite{Cho-Ozawa=2007} for $n\geq 2$.
In Lemmas 2.2 and 2.3 in \cite{Cho-Ozawa=2006}, in space dimension $n=1$, the authors obtained the estimate
\begin{equation}\label{eq:OC}
\| u (t,\cdot) \|_{L^\infty} \lesssim (1+t)^{-\frac13}\,\left( \| (u_0,u_1)\|_{L^1} + \| |D|^{-1}u_1\|_{L^1}+
 \| |D|^s(u_0,u_1)\|_{L^{6/5}}\right),
\end{equation}
under the assumption $s>3/2$. To compare the data regularity of our result with the data regularity in~\eqref{eq:OC}, we shall mix $L^1-L^\infty$ low frequencies estimates and $L^p-L^\infty$ high frequencies estimates. Setting $n=1$, $q=\infty$ and $\ell=\nu=0$ in~\eqref{eq:IBQhi}, for any $p\in[1,2]$ we find
\[ \|\mathscr{F}^{-1}(\chi_1\hat u)(t,\cdot) \|_{L^\infty} \lesssim (1+t)^{-\min\left\{\frac1p-\frac12,\frac{\nu_0}2-\frac1{2p}\right\}}\, \|\<D\>^{\nu_0} (u_0,u_1)\|_{L^p}, \qquad t\geq0, \]
in particular, we see that $p=6/5$ is the largest possible choice which guarantees that $1/p-1/2\geq 1/3$, the desired decay rate. Once we fixed $p=6/5$, we find $\nu_0=3/2$, obtaining
\[ \|u (t,\cdot) \|_{L^q} \lesssim (1+t)^{-\frac13}\,\big(\|(u_0,|D|^{-1}u_1)\|_{L^1} + \|\<D\>^{\frac32} (u_0,u_1)\|_{L^{\frac65}} \big), \qquad t\geq0.\]
We stress that with respect to the estimate in~\eqref{eq:OC}, it is sufficient for us to assume the critical regularity~$\nu_0=3/2$ in $L^{\frac65}$ and we do not need to assume~$\nu_0>3/2$. Also, we do not need to assume $u_1\in L^1$.

We incidentally notice that the choice of $p=6/5$ in~\cite{Cho-Ozawa=2006} is optimal, in the sense that to use $L^1-L^\infty$ high frequencies estimates, obtaining the desired decay rate $t^{-\frac13}$, we shall assume $\<D\>^{\frac53}(u_0,u_1)\in L^1$, but by the Sobolev embeddings,
\[ \|\<D\>^{\frac32} (u_0,u_1)\|_{L^{\frac65}} \lesssim \|\<D\>^{\frac53}(u_0,u_1)\|_{L^1}. \]
\end{remark}

\section{Degeneracy at intermediate points}\label{sec:inter}

Here we briefly discuss the case whether $\omega''$ vanishes at some given $\bar \rho>0$ with order $r-2$, that is, $\omega^{(k)}(\bar \rho)=0$ for $k=2,\ldots,r-1$. If $r\leq m$, where $m$ is as in Theorem~\ref{thm:mainlow} or as in Theorem~\ref{thm:mainhi}, it can be included in that statement with no modification. However, an example for which the influence from this degeneracy is relevant is provided by the plate equation with rotational inertia:
\begin{equation}\tag{PR}\label{eq:rot}
\begin{cases}
u_{tt} +A u_{tt} + A^2 u=0 , & t>0,\ x\in\R^n,\\
(u,u_t)(0,x)=(u_0,u_1)(x).
\end{cases}
\end{equation}
Then,
\[ \omega(\rho)=\rho^2\<\rho\>^{-1},\qquad
\omega'
    = 
    \frac{2\rho+\rho^3}{\<\rho\>^3}>0,\qquad
\omega''
    = \frac{2-\rho^2}{\<\rho\>^5}\,.
\]
In particular, $\omega''(\sqrt{2})=0$, so we need to rely on $\omega'''$ near $\rho=\sqrt{2}$. We find that %
%
%
$\omega'''(\sqrt{2})<0$. %

As a consequence, it is possible to apply Theorem~\ref{thm:mainlowdisp} with $N_0=1/2$, and Theorem~\ref{thm:mainhi} with $N_1=4$, but we need a different tool to deal with $N=1$ and $N=2$.
\begin{proposition}\label{prop:maininter}
For any~$\xi$ with $N_0/2\leq\rho(\xi)\leq 2N_1$, we assume that~$\omega$ and $g$ are smooth and that~$\sum_{k=2}^m |\omega^{k}(\rho)|\neq0$ for some $m\geq2$. Let $1\leq p\leq 2\leq q\leq\infty$ and $d=d(p,q)$ as in~\eqref{eq:d}.
Then,
\begin{equation}\label{eq:estinter}
\| \rho^\mu e^{it\omega(\rho)}(1-\chi_0(\rho)-\chi_1(\rho)) \|_{M_p^q} \leq C\,(1+t)^{-\left(\frac{n-1}\tau+\frac1m\right)d}, \quad t\geq0.
\end{equation}
\end{proposition}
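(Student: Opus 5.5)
The multiplier $1-\chi_0-\chi_1$ is supported in the compact annulus $\{N_0/2\leq\rho\leq 2N_1\}$, so it is a finite sum of dyadic pieces $\rho^\mu e^{it\omega(\rho)}\psi_N(\rho)$ with $N_0<N<N_1$. Since the number of pieces is finite, it suffices to prove the bound for a single fixed $N$; no summation over $N$ is needed. Moreover $N\sim1$, so all powers of $N$ are harmless constants. For $t\in(0,1]$ the estimate is trivial. Indeed, $\rho^\mu\psi_N$ is a smooth compactly supported function away from the origin, and $e^{it\omega}$ has uniformly bounded derivatives there. Hence the kernel $\mathscr{F}^{-1}(\rho^\mu e^{it\omega}\psi_N)$ is a Schwartz function with seminorms uniform in $t\in(0,1]$. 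It is therefore in $L^1\cap L^\infty$, which gives boundedness in $M_p^q$ for all $1\leq p\leq q\leq\infty$, including the endpoints. So we may assume $t\geq1$.

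For $t\geq1$ we follow the interpolation scheme used in the proof of Theorem~\ref{thm:mainlowdisp}. By duality we may assume $p\leq q'$. The $M_2^2$ and $M_1^2$ norms of the piece are $O(1)$, since they do not see the oscillation. This gives \eqref{eq:p2} with constant bound, and \eqref{eq:Mpq} becomes $\|\rho^\mu e^{it\omega}\psi_N\|_{M_p^q}\lesssim (\sup_x|I_N(x,t)|)^d$. It then remains to prove the pointwise bound
\[ \sup_{x\in\R^n}|I_N(x,t)|\lesssim t^{-\frac{n-1}\tau-\frac1m},\qquad t\geq1. \]
As in \S\ref{sec:stationary}, we use the reduction of \cite{CMY} via \cite[Theorem B]{BNW}. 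This writes $I_N$ as a sum over $\pm$ of one-dimensional integrals
\[ \int_0^\infty e^{i\left(rN|x|\<x/|x|,\eta_\pm\>-t\omega(Nr)\right)}H_\pm(N|x|r)\,r^{\mu+n-1}\psi(r)\,dr, \]
with $|H^{(j)}_\pm(s)|\lesssim(1+s)^{-\frac{n-1}\tau-j}$.

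We then split according to the size of $|x|$ relative to $t$. If $|x|\gg t$ or $|x|\ll t$, the phase derivative $N|x|\<x/|x|,\eta_\pm\>-tN\omega'(Nr)$ has modulus $\gtrsim |x|+t$. Here we use that $\omega'$ is bounded above and below on the compact $r$-range; this holds in the application, and otherwise one splits further. Repeated integration by parts (nonstationary phase, \cite[Chapter 8, Proposition 2.1]{SS}) then gives $O((|x|+t)^{-L})$. In the remaining region $|x|\sim t$, the amplitude factor gives $|H_\pm|\lesssim (N|x|)^{-\frac{n-1}\tau}\sim t^{-\frac{n-1}\tau}$, with derivatives in $r$ bounded by the same quantity. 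For the oscillation we write the phase as $t\Phi(r)$ with $\Phi(r)=r\lambda-\omega(Nr)$ and $\lambda=N|x|\<x/|x|,\eta_\pm\>/t$ bounded. Then $\Phi^{(k)}=-N^k\omega^{(k)}(Nr)$ for $k\geq2$. By hypothesis $\sum_{k=2}^m|\omega^{(k)}|\neq0$ on the compact set, so by compactness $\sum_{k=2}^m|\Phi^{(k)}(r)|\geq c>0$ uniformly in $r\in\supp\psi$ and in $\lambda$. Van der Corput's lemma (with a partition of the $r$-interval into finitely many pieces on each of which some $|\Phi^{(k)}|\geq c/m$, the amplitude handled by integration by parts in the usual way) then yields a factor $t^{-\frac1m}$. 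Combining the two factors gives the claimed pointwise bound.

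The main obstacle is applying van der Corput uniformly. The partition must be independent of $x$ and $t$, which is where compactness and the nonvanishing condition are used. In addition, the amplitude $H_\pm(N|x|r)r^{\mu+n-1}\psi(r)$ must have total variation $\lesssim t^{-\frac{n-1}\tau}$; this follows from the derivative bound on $H_\pm$ with $j=1$. The case $k=2$ needs no monotonicity of $\Phi'$, while for $k\geq3$ van der Corput applies directly. Finally, the endpoints $p=1$ and $q=\infty$ cause no difficulty here, since no summation over infinitely many dyadic pieces occurs and the $M_1^\infty$ bound is the direct kernel estimate.
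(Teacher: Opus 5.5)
Your proposal is correct and follows the same route as the paper's very terse proof. Both use finitely many dyadic pieces with $N\sim1$, the BNW/CMY reduction to a one-dimensional integral, van der Corput of order at most $m$ near stationary points (the paper's ``order $k+2$''), and then the interpolation \eqref{eq:Mpq}; you simply make explicit the short-time kernel bound, the $|x|\sim t$ splitting, the $(x,t)$-independent partition and the total-variation bound on the amplitude.

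One aside should be corrected. The two-sided bound on $|\omega'|$ over the compact range cannot be dispensed with by ``splitting further''. If $\omega'(\rho_0)=0$ there, the kernel at $x=0$ reduces to $c\int_0^\infty e^{-it\omega(Nr)}r^{\mu+n-1}\psi(r)\,dr$, which has a stationary point and no $t^{-\frac{n-1}\tau}$ factor, so for $n\geq2$ the claimed rate fails in general. This hypothesis is implicit in the paper too, through its appeal to Lemmas~\ref{lem:low} and~\ref{lem:hi}, and it does hold for \eqref{eq:rot}.
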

\begin{proof}
The analogous result to Lemmas~\ref{lem:low} and~\ref{lem:hi} easily follows, since $N$ ranges in a finite set $N=N_0,\ldots,N_1$. The only difference is the use of van der Corput lemma (see~\cite[Chapter VIII, Proposition 2]{SteinHarmonic}) of order $k+2$ at each stationary point, where $k$ is the order of zero of $\omega''$; here $k=0$ if $\omega''(\rho)\neq0$ at a stationary point.
\end{proof}
%
%
%
%
%
We fix $N_0=1/2$ and $N_1=4$. Using the notation in Theorems~\ref{thm:mainlow} and~\ref{thm:mainhi}, we have $a=\tilde a=2$, $b=1$, $\tilde b=-1$, so $\omega''$ is degenerate at intermediate and high frequencies. Let $\ell\in\N$ and $\nu,\nu_0\in\R$. We define
\[ \begin{split}
\kappa_\low
    & = n(1/p-1/q)+\nu+2\ell,\\
\kappa_\high
    & = n(1/p-1/q)+\nu-\nu_0+\ell.
\end{split} \]
Applying Theorem~\ref{thm:mainlow} with $\kappa=\kappa_\low$, we find that the solution to~\eqref{eq:rot} verifies the low-frequencies estimate with $\delta_\low$ as in~\eqref{eq:deltalowtheta3}-\eqref{eq:deltaundamped}. Applying also Proposition~\ref{prop:maininter} with $m=3$, we get
\[ \||D|^\nu \partial_t^\ell \mathscr{F}^{-1}((1-\chi_1)\hat u)(t,\cdot) \|_{L^q} \lesssim (1+t)^{-\min\left\{\frac{\kappa_\low}2,\left(\frac{n-1}\tau+\frac13\right)d\right\}}\,\|(u_0,|D|^{-2}u_1)\|_{L^p}. \]
The structure of this estimate is the same of the estimate for the plate equation in~\textsection\ref{sec:plate}, exception given for the fact that $w''$ vanishes at $\rho=\sqrt{2}$, so that $d/2$ is replaced by $d/3$.

We assume that $\kappa_\high\leq \frac{n-1}\tau d$ and that $\kappa_\high>0$, or $\kappa_\high\geq0$ if $1<p\leq 2\leq q<\infty$. If $u_0=0$, we may relax the assumption to $\kappa_\high>-1$, or $\kappa_\high\geq -1$ if $1<p\leq 2\leq q<\infty$. Applying Theorem~\ref{thm:mainhi}, we get that the solution to~\eqref{eq:rot} verifies the high frequencies estimate~\eqref{eq:KGhi} with $|D|^{-2}\<D\>^{\nu_0+1}u_1$ in place of $\<D\>^{\nu_0-1}u_1$. 
%
%
In particular, the solution to~\eqref{eq:rot} verifies the long time estimate
\begin{equation}\label{eq:rotest}\begin{split}
& \qquad \||D|^\nu \partial_t^\ell u(t,\cdot) \|_{L^q} \lesssim t^{-\delta_\ROT} \|(\<D\>^{\nu_0} u_0,|D|^{-2}\<D\>^{{\nu_0}+1}u_1)\|_{L^p},\qquad t\geq1,\\
& \delta_\ROT = \min\left\{\frac{n}2\left(\frac1p-\frac1q\right)+\frac{\nu}2+\ell, \left(\frac{n-1}\tau+\frac13\right)d, 2\frac{n-1}\tau d-\kappa_\high\right\}.
\end{split}
\end{equation}

\end{document}